\title{Generating family invariants for Legendrian links of unknots}
\author{Jill Jordan}
\address{Department of Mathematics\\
Bryn Mawr College\\\newline
Bryn Mawr PA 19010\\
USA}
\email{jill.e.jordan@hotmail.com}
\urladdr{}
\author{Lisa Traynor}
\address{Department of Mathematics\\
Bryn Mawr College\\\newline
Bryn Mawr PA 19010\\
USA}
\email{ltraynor@brynmawr.edu}
\urladdr{}
\let\xysavmatrix\xymatrix
\def\xymatrix{\disablesubscriptcorrection\xysavmatrix}
\def\cnewtheorem#1[#2]#3{\newtheorem{#1}{#3}[section]
\expandafter\let\csname c@#1\endcsname\c@theorem}
\newtheorem{theorem}{Theorem}[section]
\theoremstyle{definition}
\theoremstyle{remark}
\def\jetS{\mathcal{J}^1(S^1)}
\def\jetR{\mathcal{J}^1(\R)}
\def\jetRm{\mathcal{J}^1(\R^m)}
\def\jetM{\mathcal{J}^1(M)}
\numberwithin{theorem}{section}
\begin{document}

\begin{asciiabstract}
Theory is developed for linear-quadratic at infinity generating families
for Legendrian knots in R^3.   It is shown that the unknot
with maximal Thurston--Bennequin invariant of -1 has a unique
linear-quadratic at infinity generating family, up to fiber-preserving
diffeomorphism and stabilization.  From this, invariant generating
family polynomials are constructed for 2-component Legendrian links
where each component is a maximal unknot.  Techniques are developed to
compute these polynomials, and computations are done for two families
of Legendrian links:  rational links and twist links.  The polynomials
allow one to show that some topologically equivalent links with the same
classical invariants are not Legendrian equivalent.  It is also shown
that for these families of links the generating family polynomials agree
with the polynomials arising from a linearization of the differential
graded algebra associated to the links.
\end{asciiabstract}

\begin{htmlabstract}
Theory is developed for linear-quadratic at infinity generating families
for Legendrian knots in <b>R</b><sup>3</sup>.   It is shown that the unknot
with maximal Thurston&ndash;Bennequin invariant of  -1 has a unique
linear-quadratic at infinity generating family, up to fiber-preserving
diffeomorphism and stabilization.  From this, invariant generating
family polynomials are constructed for 2&ndash;component Legendrian links
where each component is a maximal unknot.  Techniques are developed to
compute these polynomials, and computations are done for two families
of Legendrian links:   rational links and twist links.   The polynomials
allow one to show that some topologically equivalent links with the same
classical invariants are not Legendrian equivalent.  It is also shown
that for these families of links the generating family polynomials agree
with the polynomials arising from a linearization of the differential
graded algebra associated to the links.
\end{htmlabstract}

\begin{abstract}
Theory is developed for linear-quadratic at infinity generating families
for Legendrian knots in $\mathbb{R}^3$.   It is shown that the unknot
with maximal Thurston--Bennequin invariant of  $-1$ has a unique
linear-quadratic at infinity generating family, up to fiber-preserving
diffeomorphism and stabilization.  From this, invariant generating
family polynomials are constructed for $2$--component Legendrian links
where each component is a maximal unknot.  Techniques are developed to
compute these polynomials, and computations are done for two families
of Legendrian links:   rational links and twist links.   The polynomials
allow one to show that some topologically equivalent links with the same
classical invariants are not Legendrian equivalent.  It is also shown
that for these families of links the generating family polynomials agree
with the polynomials arising from a linearization of the differential
graded algebra associated to the links.
\end{abstract}

\maketitle

\section{Introduction} \label{intro}

A basic problem in contact topology is determining when two Legendrian knots or links
are equivalent.  Two Legendrian links that are topologically equivalent can 
sometimes be distinguished via the classical Legendrian invariants of
the rotation and Thurston--Bennequin numbers for the components.  \ In recent years, new invariants for
Legendrian  links have come from Legendrian contact homology,
(see Chekanov \cite{Chekanov}, Etnyre, Ng and Sabloff \cite{ENS}, and
Eliashberg, Givental and Hofer \cite{EGH}).  One can use the theory of
holomorphic curves to associate a differential, graded algebra (DGA) to
a Legendrian link. \ It is sometimes possible to associate an invariant
polynomial to a Legendrian link
 by means of this DGA (see, for example, \cite{Chekanov}, Ng \cite{Ng1},
and Ng and Traynor \cite{Ng-Traynor}).

New invariants for some Legendrian links have also
come from the theory of generating families.  The theory of generating families, also known as generating functions, is quite classic; some history can be found, for example,
in Eliashberg and Gromov \cite{EG}.  In the early 1990's,
the technique of generating functions received renewed attention due to the
work of Viterbo \cite{Viterbo}.  Viterbo found that the $0$--section of a cotangent bundle
has a unique quadratic at infinity generating family. A careful proof of
this uniqueness statement was carried out by David Th\'{e}ret in
\cite{Theret}. This uniqueness result has many interesting symplectic
applications; see, for example,
\cite{Viterbo}, The\'ret \cite{TheretCamel}, and Traynor \cite{Symplectic}.
By some standard identifications, this
uniqueness result leads to a uniqueness result for quadratic at infinity
generating functions of the Legendrian $1$--jet of the zero function in
$\mathcal{J}^{1}\left( M\right) $.  From this uniqueness result in the
contact setting, it is possible to construct  invariant
polynomials for two component links in $\mathcal{J}^{1}\left( S^{1}\right) $
when each component is Legendrian isotopic to the $1$--jet of a function
$f\co S^{1}\rightarrow \R$ (see Traynor \cite{Generating}).

The particular focus of this paper is to associate a pair of invariant
polynomials to a specific type of two-component Legendrian link, one in
which each component is a Legendrian unknot with maximal Thurston--Bennequin
number of $-1$.
We will call such an unknot a \emph{maximal unknot}. \fullref{fig:trivial} shows a maximal unknot; the image of this under any contact isotopy
will also be a maximal unknot.

\begin{figure}[ht]
\centerline{\includegraphics[height=.7in]{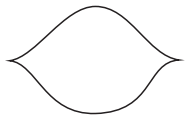}}
\caption{A maximal Legendrian unknot}
\label{fig:trivial}
\end{figure}

 We will now look at two families of Legendrian links that
can be constructed from maximal unknots. The first type, called a \emph{rational link}, 
is topologically a closure of a
rational tangle, and we use a vector notation to describe it.  Similar
notations for rational tangles and for another type of Legendrian link are
found in Ernst \cite{Ernst} and Traynor \cite{Generating}, respectively. We define the
link $L=\left( 2w_{n},k_{n},\dots ,2w_{1},k_{1},2w_{0}\right) ,w_{i},k_{i}>0$, recursively as follows. For $n=0$, the link has $2w_{0}$ ``horizontal''
crossings, as illustrated in \fullref{fig:integral}  for the case $w_{0}=2$.
For $n\geq 1$, the link $\left( 2w_{n},k_{n},\dots ,2w_{1},k_{1},2w_{0}\right) $ is formed from the
link $\left( 2w_{n},k_{n},\dots ,2w_{1}\right) $ by adding $k_{1}$
``vertical'' crossings and $2w_{0}$ ``horizontal'' crossings as shown in
\fullref{fig:recursive}. For example, see
\fullref{fig:(42212)} 
to see how we build the link $\left( 4,2,2,1,2\right) $
from the link $\left( 4,2,2\right) $.

\begin{figure}[ht] 
\labellist\small
\pinlabel {$\Lambda_1$} [bl] at 369 183
\pinlabel {$\Lambda_2$} [tl] at 362 74
\endlabellist
\centerline{\includegraphics[height=1in]{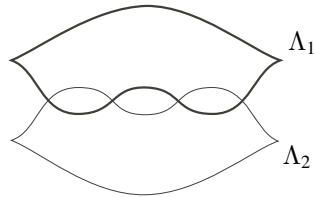}}
\caption{The Legendrian link $L=\left( 4\right)$}
\label{fig:integral}
\end{figure}

\begin{figure}[ht]
\labellist\tiny
\pinlabel {$(2w_n,k_n,\ldots,2w_1)$} at 110 162
\pinlabel {$2w_0$} at 325 172
\pinlabel {$k_1$} at 487 90
\pinlabel {$(2w_n,k_n,\ldots,2w_1)$} at 488 236
\endlabellist
\centerline{\includegraphics[height=2.3in]{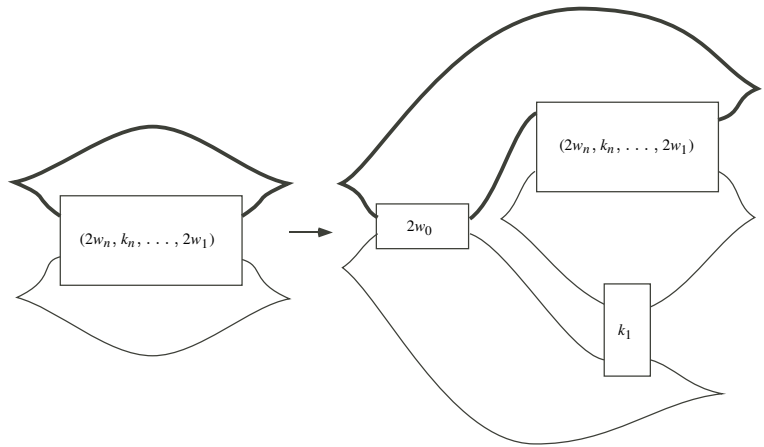}}
\caption{The link $\left( 2w_{n},k_{n},\dots ,2w_{1},k_{1},2w_{0}\right) $
is built from the link $\left( 2w_{n},k_{n},\dots ,2w_{1}\right)$.}
        \label{fig:recursive}
\end{figure}

\begin{figure}[ht]
\labellist\small
\pinlabel {$\Lambda_0$} [tr] at 33 74
\pinlabel {$\Lambda_1$} [br] at 34 182
\pinlabel {$\Lambda_0$} [tr] at 306 58
\pinlabel {$\Lambda_1$} [br] at 311 170
\pinlabel {$(4,2,2)$} [b] at 108 0
\pinlabel {$(4,2,2,1,2)$} [b] at 420 0
\endlabellist
\centerline{\includegraphics[height=1.5in]{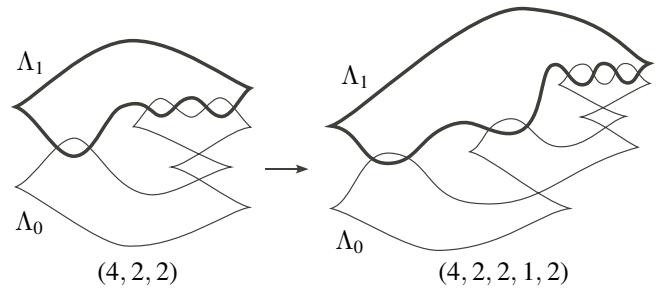}}
\caption{The link $\left( 4,2,2,1,2\right) $ is built from the link $\left(4,2,2\right)$.}
        \label{fig:(42212)}
\end{figure}

We will call a second type of link under consideration a \emph{twist link}. 
Topologically, twist links are
formed by clasping together two unknots, twisting each component a
number of times, and then clasping the knots together again. 
We will
let $L_{j,k}=\left( \Lambda _{1},\Lambda _{0}\right) $ represent the twist
link with left component ($\Lambda _{1}$) twisted so it has $j$ crossings,
and right component ($\Lambda _{0}$) twisted to have $k$ crossings; see
\fullref{fig:Ljk}. \label{Ljkdef}

\begin{figure}[ht]
\labellist\small
\pinlabel {$\Lambda_1$} [br] at 102 434
\pinlabel {$\Lambda_0$} [bl] at 368 434
\pinlabel {$j$ crossings} [r] at 77 239
\pinlabel {$k$ crossings} [l] at 400 239
\endlabellist
\centerline{\includegraphics[height=2.2in]{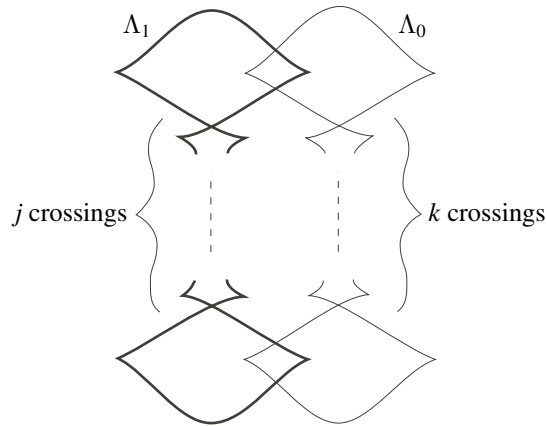}}
\caption{The twist link $L_{j,k}$}
\label{fig:Ljk}
\end{figure}

These links of maximal unknots will be studied via the technique of generating
families.   The type of generating families we use to study these links  
is different from the generating families used in Traynor \cite{Generating} to study
links of topologically nontrivial components in $\mathcal{J}^{1}\left( S^{1}\right) $. 
Previously Traynor
associated a \emph{quadratic at infinity generating family} to each link
component $\Lambda _{i}$. That is, given $\Lambda _{i}$, there exists a
function $F_{i} \co S^{1}\times \R^{N_{i}}\rightarrow \R$ for
some $N_{i}\geq 0$, such that 
\begin{equation*}
\Lambda _{i}=\left\{ \left( x,\partial _{x}F_{i}\left( x,v\right)
,F_{i}\left( x,v\right) \right) \mid \partial _{v}F_{i}\left( x,v\right)
=0\right\} \text{ and}
\end{equation*}
\begin{equation*}
F_{i}\left( x,v\right) \equiv Q\left( v\right) \text{ outside a compact set,}
\end{equation*}
where $Q$ is a nondegenerate quadratic function. However, it
is not hard to see that when a component $\Lambda _{i}$ is a knot in $\R^{3}$, it cannot be defined by a
function that is quadratic at infinity. We will see that it is sometimes
possible to define a Legendrian knot by a  ``linear-quadratic" at
infinity generating family, abbreviated as an LQ generating family. A
function $F_{i}\co \R\times \left( \R\times \R^{N_{i}}\right) \rightarrow \R$ is an \emph{LQ generating family for 
}$\Lambda _{i} \subset \jetR = \R^3$ if 
\begin{equation*}
\Lambda _{i}=\left\{ \left( x,\partial _{x}F_{i}\left( x,l,v\right)
,F_{i}\left( x,l,v\right) \right) \mid \partial _{l}F_{i}\left( x,l,v\right)
=0\text{ and }  \partial _{v}F_{i}\left( x,l,v\right) =0\right\} 
\end{equation*}
\begin{equation*}
\text{ and } F_{i}\left( x,l,v\right) \equiv J\left( l\right) +Q\left( v\right) \text{ outside a compact set,}
\end{equation*}
where $J$ is a nonzero linear function and $Q$ is a nondegenerate quadratic
function. 

In order to use LQ generating families to construct polynomial invariants,
we need existence and uniqueness results. The following three theorems, proved
in \fullref{Existence&Uniqueness}, are modeled after David Th\'{e}ret's
results in the symplectic category \cite{Theret}.

\begin{theorem}[Existence Theorem] 
Let $\Lambda$ be a maximal Legendrian unknot. Then $\Lambda$ has an LQ generating family.
\end{theorem}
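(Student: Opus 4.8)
The plan is to construct an LQ generating family for a maximal Legendrian unknot $\Lambda$ explicitly, using the fact that, up to Legendrian isotopy, we may take $\Lambda$ to be any convenient representative of the maximal unknot. First I would fix a standard front-projection model of the maximal unknot—the ``flying saucer'' picture of \fullref{fig:trivial}—and write down a function $F\co \R\times(\R\times\R^{N})\to\R$ whose critical locus (in the sense of the defining equations $\partial_l F=\partial_v F=0$) traces out exactly the points $(x,\partial_x F,F)$ of this front. The key point is that a maximal unknot has a front with exactly two cusps and, away from the cusps, consists of two arcs that are graphs $z=g_{\pm}(x)$ over an interval; the generating family must encode these two branches together with the cusp behavior. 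A natural choice is to introduce a single fiber variable $l$ to resolve the double point/two-branch structure near the cusps (this is where the \emph{linear} tail $J(l)$ comes from) and to take $F$ of the form $F(x,l,v)=h(x,l)+Q(v)$ for a nondegenerate quadratic $Q$ in the remaining variables $v$, with $h$ a carefully chosen function agreeing with $J(l)$ for $|l|$ large and for $x$ outside a compact interval.

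Next I would verify the two defining conditions. For the critical-point condition, I would compute $\partial_l h(x,l)=0$, solve for $l=l(x)$ on the relevant region, and check that $(x,\partial_x h(x,l(x)),h(x,l(x)))$ sweeps out precisely $\Lambda$, including checking that the two values of $l$ solving $\partial_l h=0$ over the ``body'' of the saucer give the upper and lower branches of the front and that these branches meet in semicubical cusps at the two endpoints. The quadratic summand $Q(v)$ contributes nothing to the critical locus (its only critical point is $v=0$) but is available for a later stabilization argument and to match the ``$+Q(v)$'' shape required by the definition. For the behavior at infinity, I would arrange $h(x,l)\equiv J(l)$ with $J$ a fixed nonzero linear function (say $J(l)=cl$, $c\neq0$) for all $(x,l)$ outside a compact set, so that $F(x,l,v)\equiv J(l)+Q(v)$ there, as required.

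The main obstacle I expect is the gluing near the cusps: one must interpolate between the genuinely two-branched behavior over the body of the saucer and the linear-at-infinity tail in $l$, while keeping $\partial_l F=0$ cutting out exactly the desired front and introducing no spurious critical points. Concretely, the function $h(x,l)$ needs to be something like a cubic in $l$ whose discriminant in $l$ controls where the two branches exist, with coefficients depending on $x$ so that the fold locus of the projection $(x,l)\mapsto x$ restricted to $\{\partial_l h=0\}$ sits exactly at the two cusp $x$-values; then one must cut this off and splice in $cl$ outside a compact set without creating new solutions of $\partial_l h=0$. I would handle this by first building $h$ on a neighborhood of the saucer where everything is explicit, then using a bump function to damp the $x$-dependent coefficients to constants as $x$ leaves a compact interval, and finally checking directly that on the transition region $\partial_l h=0$ has no solutions (or only solutions already accounted for). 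Once $h$ is in hand, a routine check of the Legendrian condition $dz=p\,dx$ along the critical locus—equivalently, that $\partial_x F=p$ is consistent along the front—completes the argument; I would note that this is automatic from the generating-family formalism. Finally, because the resulting front is embedded with the correct two cusps and writhe, its classical invariants are $\mathrm{tb}=-1$ and $r=0$, confirming that the constructed $\Lambda$ is indeed the maximal unknot and not some stabilization.
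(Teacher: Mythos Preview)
Your explicit construction for the standard flying-saucer model is on the right track and matches what the paper has in mind: the cubic-in-$l$ form $h(x,l)=l^{3}-xl$ (plus a nondegenerate quadratic $Q(v)$) is exactly the local normal form the paper invokes near each cusp (see the proof of \fullref{nbhd}), and your discussion of splicing this to a linear tail $J(l)$ outside a compact set is the correct way to produce a globally defined LQ family for the model unknot. The paper sketches precisely this in \fullref{fig:gffig}.

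There is, however, a genuine gap in the opening move. You write that ``up to Legendrian isotopy, we may take $\Lambda$ to be any convenient representative,'' and then construct a generating family only for that representative. But the theorem asserts that the \emph{given} $\Lambda$ has an LQ generating family, not merely some Legendrian-isotopic copy of it. Passing from the model to an arbitrary maximal unknot requires knowing that the existence of an LQ generating family is preserved under Legendrian isotopy, and this is not automatic: it is precisely the content of the path lifting property (\fullref{path lifting property}) and, more generally, the Serre fibration structure (\fullref{Theret 4.2}). The paper's proof of existence is exactly your step 1 \emph{together with} an appeal to this persistence result (the $\Delta_{n}$ zero-dimensional case of \fullref{Serre}). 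Without it, you have only shown that the isotopy class contains a representative with an LQ generating family, which is strictly weaker. So your proposal needs one additional sentence invoking \fullref{path lifting property} to carry the generating family along the isotopy from the model to $\Lambda$.
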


\noindent An LQ generating family can be explicitly constructed for some
configurations of a maximal unknot, such as the one shown in \fullref{fig:trivial}. 
Since any maximal unknot can be isotoped to look like the one in
\fullref{fig:trivial}, 
the existence theorem is a direct result of the
following theorem, in the case where $\Delta _{n}$ is zero dimensional.

\begin{theorem}[Serre Fibration Structure Theorem]
\label{Serre}(See \fullref{Theret 4.2}) Let $\mathcal{F}$ be the set of
LQ generating families and let $\mathcal{L}$ be the set of Legendrian
submanifolds of $\mathcal{J}^{1}\left( M\right) $.
 Then the map $\pi \co\mathcal{F}\rightarrow \mathcal{L}$ is a smooth Serre fibration, up to
equivalence. More precisely, if the smooth map $f\co\Delta _{n}\rightarrow 
\mathcal{L}$ has a smooth lift $F\co \Delta _{n}\rightarrow \mathcal{F}$ and if 
$\left( f_{t} \co \Delta _{n}\rightarrow \mathcal{L}\right) _{t\in \left[ 0,1\right]}$ is a smooth homotopy of $f=f_{0}$, then there is a smooth
homotopy $\left( F_{t}\co \Delta _{n}\rightarrow \mathcal{F}\right) _{t\in \left[ 0,1\right]}$ such that $F_{0}=F$ up to equivalence (that is, up to
stabilization and fiber-preserving diffeomorphism), and $\pi \circ
F_{t}=f_{t}$ for every $t\in \left[ 0,1\right] $.
\end{theorem}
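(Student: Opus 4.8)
The plan is to deduce the homotopy-lifting property from two classical ingredients, following the outline of Th\'eret's argument in the symplectic setting \cite{Theret}: a parametrized Legendrian isotopy extension theorem, and the standard formula expressing a generating family for $\Phi(\Lambda)$ in terms of generating families for $\Lambda$ and for the contactomorphism $\Phi$.

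First I would observe that a smooth homotopy $(f_t)_{t\in[0,1]}$, $f_t\co\Delta_n\to\L$, is induced by an ambient isotopy: applying a parametrized version of the Legendrian isotopy extension theorem (available because every $f_t(p)$ is compact, so that $\bigcup_{p,t}f_t(p)$ lies in a compact subset of $\jetM$), one obtains a smooth family $\bigl(\Phi^p_t\bigr)_{p\in\Delta_n,\,t\in[0,1]}$ of compactly supported contactomorphisms of $\jetM$ with $\Phi^p_0=\mathrm{id}$ and $\Phi^p_t\bigl(f_0(p)\bigr)=f_t(p)$, depending smoothly on $(p,t)$.

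Second, I would adapt to $\jetM$ the fact that a compactly supported contact isotopy admits a generating family --- a function from which $\Phi^p_t$ can be recovered --- and that, since $\Phi^p_t$ is the identity outside a compact set, this function is quadratic at infinity and introduces no new linear direction. Feeding it into the composition formula produces, for each $p$, a smooth family $F^p_t$ of generating families for $f_t(p)$. At $t=0$ the isotopy is the identity, so $F^p_0$ is $F(p)$ with a nondegenerate quadratic $Q'(w)$ added in the new fiber variables $w$; thus $F^p_0$ is a stabilization of $F(p)$, which is exactly what ``$F_0=F$ up to equivalence'' means. Outside a compact set $F^p_t\equiv J(l)+Q(v)+Q'(w)$, and after regrouping $(v,w)$ into a single nondegenerate quadratic block this is again linear-quadratic at infinity, so $F_t\co\Delta_n\to\mathcal F$ is the required smooth homotopy with $\pi\circ F_t=f_t$. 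Since the isotopy extension theorem holds with parameters, and both the generating family of $\Phi^p_t$ and the composition formula can be arranged to depend smoothly on $(p,t)$, the whole construction goes through with $p\in\Delta_n$ present throughout; and because we start from a genuine smooth homotopy defined on all of $\Delta_n$, no difficulty is caused by the boundary of the simplex.

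The main obstacle I anticipate is the construction of the generating family for the ambient contact isotopy $\Phi^p_t$, together with the verification that composing it with $F(p)$ preserves the linear-quadratic form \emph{at infinity} --- not merely ``of LQ type on a neighborhood of infinity'' --- uniformly over the compact parameter space $\Delta_n\times[0,1]$. One must ensure that the critical-point reduction implicit in the composition formula does not disturb the single linear direction $J(l)$ or the nondegeneracy of the quadratic part, and that the stabilizing form $Q'$ can be taken nondegenerate and independent of $p$. This is exactly the point at which Th\'eret's symplectic techniques must be transported to the contact and $1$--jet setting, and it is where the ``up to equivalence'' clause in the statement does its work.
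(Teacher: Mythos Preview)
Your approach is correct in outline, but it differs from the paper's in one structural respect that is worth pointing out.

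You propose to carry the simplex parameter $p\in\Delta_n$ through the entire argument: parametrized isotopy extension, a $(p,t)$--family of contactomorphisms, and then Chekanov's composition formula applied with the extra parameter $p$ present throughout. This works, and the technical point you flag (that the composition preserves the LQ-at-infinity form uniformly over the compact parameter space) is exactly the right one to worry about.

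The paper instead factors the argument through two intermediate results. First it proves the \emph{path} lifting property (\fullref{path lifting property}) --- essentially your argument in the special case $n=0$ --- by the Chekanov-formula route you describe. Then it proves a separate bundling lemma (\fullref{Theret 4.3}): a $\Delta_n$--family of Legendrians in $\jetM$ assembles into a single Legendrian in $\mathcal{J}^1(\Delta_n\times M)$, and a smooth $\Delta_n$--family of LQ generating families assembles into a single LQ generating family for it (and conversely). With that lemma in hand, the homotopy $(f_t\co\Delta_n\to\mathcal L)_{t\in[0,1]}$ becomes a \emph{path} $\bar f\co[0,1]\to\mathcal L\bigl(\mathcal{J}^1(\Delta_n\times M)\bigr)$, and one simply invokes path lifting in the bigger jet space.

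The trade-off: your direct approach avoids the bundling lemma but forces you to verify smooth dependence on $p$ at every stage of the Chekanov-formula construction (breaking the isotopy into small pieces, composing, checking LQ at infinity). The paper's approach absorbs the parameter $p$ into the base manifold once and for all, so that only the $n=0$ case of the analytic work ever needs to be done; the price is the extra lemma identifying families over $\Delta_n$ with single objects over $\Delta_n\times M$. Both are legitimate, and your sketch would go through, but the paper's decomposition is the one Th\'eret uses and is arguably cleaner to write down rigorously.
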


\noindent It is clear that, if a LQ generating family exists, it is not unique. 
However it is unique up to a certain equivalence.

\begin{theorem}[Uniqueness Theorem] (See \fullref{uniqueness}.)
Let $\Lambda $ be a maximal Legendrian unknot. Then all
LQ generating families for $\Lambda $ are equivalent up to stabilization and
fiber-preserving diffeomorphism.
\end{theorem}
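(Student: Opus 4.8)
The plan is to adapt David Th\'eret's uniqueness argument for quadratic at infinity generating families to the linear-quadratic setting. Let $F_{0}$ and $F_{1}$ be two LQ generating families for the maximal unknot $\Lambda\subset\jetR$. The first step is to reduce to a single explicit model. As noted above, any maximal unknot can be Legendrian isotoped to the standard one $\Lambda_{\mathrm{std}}$ of \fullref{fig:trivial}, so there is a smooth path $\left(\Lambda_{t}\right)_{t\in\left[0,1\right]}$ in $\L$ with $\Lambda_{0}=\Lambda$ and $\Lambda_{1}=\Lambda_{\mathrm{std}}$; moreover $\Lambda_{\mathrm{std}}$ carries an explicitly constructed LQ generating family $G$. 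Applying \fullref{Serre} with $\Delta_{n}$ a point, I would lift $\left(\Lambda_{t}\right)$ starting from $F_{0}$ and, separately, from $F_{1}$, obtaining LQ generating families $F_{0}'$ and $F_{1}'$ for $\Lambda_{\mathrm{std}}$ joined to $F_{0}$ and $F_{1}$ by homotopies of LQ generating families covering $\left(\Lambda_{t}\right)$. To complete the reduction one checks that this transport descends to equivalence classes and is inverted by lifting the reversed path $\left(\Lambda_{1-t}\right)$ --- equivalently, that the monodromy of $\pi\co\mathcal{F}\to\L$ around a loop of maximal unknots is trivial up to stabilization and fiber-preserving diffeomorphism. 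I would obtain this by a concatenation (``doubling'') argument in the spirit of Th\'eret: a loop of maximal unknots bounds, and lifting the bounding family via \fullref{Serre} on $2$--simplices shows the two lifted endpoints over $\Lambda_{\mathrm{std}}$ are equivalent.

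It then suffices to prove that every LQ generating family $F\co\R\times\left(\R\times\R^{N}\right)\to\R$ for the fixed model $\Lambda_{\mathrm{std}}$ is equivalent to $G$. The front of $\Lambda_{\mathrm{std}}$ is a standard ``eye'': two arcs over a bounded interval meeting in two cusps. First I would apply a fiber-preserving diffeomorphism putting the critical locus $\left\{\partial_{l}F=0,\ \partial_{v}F=0\right\}$ into a standard position over the base, so that the defining condition for $F$ identifies it along this locus with the prescribed front. A parametrized Morse lemma then splits off the $v$--directions as a nondegenerate quadratic form $Q\left(v\right)$ in a neighborhood of the critical locus, while near each cusp the $l$--direction is brought to the normal form of a birth/death, compatibly with the linear term $J\left(l\right)$ at infinity. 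After stabilizing $F$ and $G$ so that their fiber dimensions and the signatures of their quadratic parts agree, comparing the two resulting normal forms produces a final fiber-preserving diffeomorphism carrying one to the other, whence $F\sim G$.

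The main obstacle is the globalization. The normal forms above are purely local --- near the critical locus and near the cusps --- and patching them into a single fiber-preserving diffeomorphism of all of $\R\times\R\times\R^{N}$ that \emph{also} respects the requirement $F\equiv J\left(l\right)+Q\left(v\right)$ outside a compact set is the delicate point; this is where I expect to re-use the homotopy-lifting mechanism of \fullref{Serre}, interpolating between $F$ and its normalization through LQ generating families all covering the constant path at $\Lambda_{\mathrm{std}}$, rather than arguing only locally. A second subtlety, with no counterpart in Th\'eret's quadratic-at-infinity framework, is that the linear term $J\left(l\right)$ must be tracked at every stage: one must check that it is preserved --- up to a fiber-preserving change, and up to the normalization allowed by stabilization --- under both the isotopy transport and the normalization, so that it cannot obstruct the final equivalence. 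Controlling these two points carefully is, I expect, the bulk of the work; granting them, the theorem follows as above.
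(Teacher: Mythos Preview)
Your reduction to the standard model via the Serre fibration is in the spirit of the paper's Theorem~\ref{Theret 5.1} (uniqueness persists under Legendrian isotopy), though the paper runs the logic in the opposite order: it first proves uniqueness for $\Lambda_{\mathrm{std}}$ directly and then propagates outward. Either direction of that reduction ultimately rests on the fact that two generating families joined by a path in $\pi^{-1}(L)$ over a \emph{fixed} Legendrian are equivalent; this is the vector-field argument sketched after Lemma~\ref{Theret 5.2}, which you do not mention and which is not itself a consequence of the Serre fibration.

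The real gap is in your proof for $\Lambda_{\mathrm{std}}$. Your proposed globalization --- ``re-use the homotopy-lifting mechanism of \fullref{Serre}, interpolating between $F$ and its normalization through LQ generating families all covering the constant path at $\Lambda_{\mathrm{std}}$'' --- is circular. The Serre fibration lifts a path of \emph{Legendrians} to a path of generating families starting from a prescribed one; when the base path is constant the lift is constant and never reaches $G$. Obtaining a path in $\pi^{-1}(\Lambda_{\mathrm{std}})$ from $F$ to $G$ via the fibration machinery (as in Lemma~\ref{Theret 5.2}) already presupposes uniqueness at some isotopic Legendrian, which is precisely what is at stake. And your local normal forms, while correct, give no mechanism for producing such a path directly.

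The paper's argument for $\Lambda_{\mathrm{std}}$ is concrete and quite different from what you sketch. After your local step --- arranging $f_x=g_x$ on a neighborhood $U(x)$ of the fiber-critical set, which the paper extends to include the \emph{unique} gradient trajectory joining the two nondegenerate critical points (Lemma~\ref{nbhd}) --- it builds the global fiber-preserving diffeomorphism $\varphi_x$ by hand using gradient flows. A point $w$ is flowed along $\nabla g_x$ to a reference point in $U(x)$ or on a negative-infinity level set $g_x^{-1}(c)$, $c\ll 0$, and then flowed back along $\nabla f_x$ to the value $g_x(w)$. The linear term $J(l)$ you are worried about is exactly what makes these $-\infty$ level sets embedded copies of $\R^n$ and guarantees every orbit reaches one; a diffeomorphism $\Theta_x\co g_x^{-1}(c)\to f_x^{-1}(c)$, extended via isotopy extension from the orbits that also meet $U(x)$, makes $\varphi_x$ well defined. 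This gradient-flow construction is the missing core idea, and it replaces rather than supplements the Serre-fibration step you propose.
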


\noindent The operations of stabilization and fiber-preserving diffeomorphism
are explained in \fullref{defn:gf_equivalence}.

In \fullref{Applications}, in analogy with \cite{Generating},
  we apply the existence and uniqueness results
from \fullref{Existence&Uniqueness} to  associate  polynomials,
$\Gamma^\pm_\Delta(\lambda)$, to a two-component
Legendrian link where each component is a maximal unknot. Normalized
versions of these polynomials give Laurent polynomials $\Gamma^\pm(\lambda)$ 
that are shown to be invariants of a Legendrian
link.  In fact, $\Gamma^+(\lambda)$ is determined by $\Gamma^-(\lambda)$:
$\Gamma^+(\lambda) = \lambda \cdot \Gamma^-(\lambda)$. The following theorems give the calculations of $\Gamma^-$  for rational links
and twist links.

\begin{theorem}[See \fullref{theorem 6.1}]
Let $L$ be the Legendrian link
$$\left(
2w_{n},k_{n},\dots ,2w_{1},k_{1},2w_{0}\right)$$
as described  in \fullref{fig:recursive}.
  Then 
  $$ \Gamma ^{-}\left( \lambda \right) \left[ L\right] = w_{0}\lambda
^{0}+w_{1}\lambda ^{-k_{1}}+w_{2}\lambda ^{-\left( k_{1}+k_{2}\right)
}+\dots +w_{n}\lambda ^{-\left( k_{1}+k_{2}+\dots +k_{n}\right)}. $$
\end{theorem}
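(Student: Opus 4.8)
The plan is to choose explicit LQ generating families for the two components $\Lambda_0,\Lambda_1$ that are adapted to the recursive description of $L$ in \fullref{fig:recursive}, compute the resulting difference function, and then induct on $n$. Recall from \fullref{Applications} that once LQ generating families $F_0,F_1$ for the two components have been fixed, $\Gamma^-(\lambda)$ is extracted from the critical points of the associated difference function built from $F_1$ and $F_0$, with $\lambda$ recording the relevant integer grading, and that the Existence and Uniqueness Theorems of \fullref{Existence&Uniqueness} guarantee the answer is independent of all choices. In particular we are free to replace $F_0,F_1$ by any equivalent families (stabilizations and fibre-preserving diffeomorphisms, see \fullref{defn:gf_equivalence}) without changing $\Gamma^-$.

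First I would handle the base case $n=0$. The link $(2w_0)$ of \fullref{fig:integral} is a pair of nested maximal unknots meeting in $2w_0$ horizontal crossings; using the standard LQ generating family for a maximal unknot supplied by the Existence Theorem for each component, positioned so that the front of $\Lambda_1$ lies $w_0$ ``bumps'' above that of $\Lambda_0$, one checks directly that the difference function is a Morse function with exactly $w_0$ critical points, all of grading $0$. Hence $\Gamma^-(\lambda)[(2w_0)] = w_0\lambda^0$, which is the asserted formula when $n=0$.

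For the inductive step, write $\Lambda' = (2w_n,k_n,\dots,2w_1)$; under the recursive labelling this link is of the same type with $n-1$ levels, so the induction hypothesis gives
$$\Gamma^-(\lambda)[\Lambda'] = w_1 + w_2\lambda^{-k_2} + \dots + w_n\lambda^{-(k_1'+\dots)} = w_1 + w_2\lambda^{-k_2}+\dots+w_n\lambda^{-(k_2+\dots+k_n)}.$$
I would then prove two gluing lemmas for the operations that build $L$ from $\Lambda'$, namely: (a) adding $k_1$ vertical crossings multiplies $\Gamma^-$ by $\lambda^{-k_1}$; and (b) adding $2w_0$ horizontal crossings adds $w_0\lambda^0$. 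Granting these,
$$\Gamma^-(\lambda)[L] = w_0 + \lambda^{-k_1}\,\Gamma^-(\lambda)[\Lambda'],$$
which is exactly the claimed polynomial. Each lemma is proved by modifying the generating families of the components of $\Lambda'$ in a model region near the new crossings: the $2w_0$ horizontal crossings introduce $w_0$ new critical points of grading $0$ and leave the remaining critical values and gradings untouched (the same local computation as in the base case), while the $k_1$ vertical crossings are realized by a modification that shifts the grading of every pre-existing critical point of the difference function down by exactly $k_1$, without creating or destroying any. Throughout one appeals to the invariance of $\Gamma^-$ under equivalence so that only the correct count, not a canonical family, is needed.

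The main obstacle I expect is the bookkeeping in these gluing lemmas: one must show that a purely local change of the generating families near a twist or clasp region has the stated effect on the \emph{global} critical-point set of the difference function, in particular that the grading shift coming from a vertical crossing is uniformly $-1$ and independent of which critical point one tracks, and that no spurious critical points appear. This will require a normal form for the LQ generating families near the twist/clasp regions of \fullref{fig:recursive} together with a handle-attachment or Mayer--Vietoris style argument assembling the local pictures. The base case and the ``horizontal crossing'' half of the induction are essentially the same elementary computation, so the real content of the proof is the ``vertical crossing'' half, that is, establishing the multiplicative factor $\lambda^{-k_1}$.
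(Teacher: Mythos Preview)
Your inductive scheme is different from the paper's argument, and in its current form it has real gaps.

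The paper does not induct on $n$. It positions $L$ once so that the difference function $\Delta$ has exactly $2(w_0+\dots+w_n)$ nondegenerate critical points, reads off their critical values $c_j^\pm$ and relative indices directly from the front via the branch/graph index machinery (\fullref{indices}), and then verifies the hypotheses of \fullref{proposition 4.2}. The substantive step is hypothesis~(3) there, namely $H_*(\Delta^{b_j},\Delta^{a_j})=0$ for the intermediate windows; this is obtained not by a Legendrian isotopy but by a deformation of the components that undoes the $2w_n+\dots+2w_j$ innermost crossings, combined with \fullref{lemma 3.10}. Once (1)--(3) hold, \fullref{proposition 4.2} outputs the polynomial.

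Two concrete problems with your approach. First, $\Gamma^-$ is \emph{relative homology} of sublevel sets, not a critical-point count. Even in your base case $(2w_0)$ the difference function has $2w_0$ critical points (half with positive, half with negative critical value), not $w_0$; more importantly, to conclude $\Gamma^-=w_0\lambda^0$ you must show that nothing cancels in the Morse complex, which is exactly the content of the vanishing hypothesis~(3) and the long-exact-sequence packaging in \fullref{proposition 4.2}. Your gluing lemma~(b) has the same issue: new horizontal crossings create $2w_0$ new critical points of $\Delta$, and you must argue there are no differentials between these and the old critical points. That argument is not local --- it is the global deformation in the paper.

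Second, your gluing lemma~(a) is not well posed. After adjoining $k_1$ vertical crossings to the tangle underlying $(2w_n,\dots,2w_1)$ but before adding the $2w_0$ horizontal crossings and reclosing, you do not have a two-component link of maximal unknots (indeed the closure at that stage need not even be a two-component link), so $\Gamma^-$ is undefined for the intermediate object and ``multiplies $\Gamma^-$ by $\lambda^{-k_1}$'' has no meaning. If you instead try to phrase~(a) as a statement about how the branch indices of the $\Lambda'$ critical points shift inside the final link $L$, you are back to the paper's direct computation of indices via \fullref{indices}, and the induction buys nothing.

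In short: the grading shift you want from the vertical crossings is exactly the branch-index calculation done once and for all in the paper, and the ``no spurious cancellation'' you need is exactly the deformation argument feeding into \fullref{proposition 4.2}. Rewriting these as an induction on $n$ does not avoid either step.
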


\begin{theorem}[See \fullref{Ljkpolys}]
\label{Ljkpolys1}  Let $L_{j,k}$ be a
Legendrian twist link as described in \fullref{fig:Ljk}.  Then 
$$\Gamma ^{-}\left( \lambda \right) \left[ L_{j,k}\right] =\lambda
	^{0}+\lambda ^{-\left| j-k\right|}.$$
\end{theorem}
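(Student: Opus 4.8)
The plan is to compute $\Gamma^-(\lambda)[L_{j,k}]$ directly from the definition of the generating-family polynomials, following the same mechanism developed in \fullref{Applications}. I would first construct an explicit LQ generating family $F_i$ for each component $\Lambda_i$ of $L_{j,k}$: by the Existence Theorem each maximal unknot has one, and for the standard picture of \fullref{fig:Ljk} one can write these down concretely by keeping careful track of how the $j$ (resp.\ $k$) Reidemeister-type twists and the two clasps contribute to the front projection. The key point of the Uniqueness Theorem is that the resulting polynomial does not depend on which LQ family we pick, so we are free to choose the most convenient presentation. I would then form the difference function $F_1 \ominus F_0$ (the generating family for the appropriate ``difference'' Legendrian, as in \cite{Generating}) and read off $\Gamma^\pm_\Delta(\lambda)$ from the relative homology / Morse-theoretic data at the critical values, organized by the $\lambda$-grading coming from the $l$-variable of the linear part.

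The heart of the computation is identifying the critical points of $F_1 \ominus F_0$ and their gradings. Geometrically these correspond to the crossings and to the intersection points in the front projection where one component lies above the other; the clasps at top and bottom each contribute a pair, and the twist regions contribute the intermediate points. I expect the count of critical points at $\lambda$-degree $0$ to be governed by the clasps (giving the $\lambda^0$ term with coefficient $1$), while the twisting shifts a single remaining pair of critical points to $\lambda$-degree $-|j-k|$: twisting $\Lambda_1$ by $j$ and $\Lambda_0$ by $k$ changes the relevant ``height'' or winding discrepancy by $j-k$, and only the absolute value survives because the roles of the two components (hence the sign of the degree shift) are interchangeable up to the symmetry of the link. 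After assembling $\Gamma^-_\Delta(\lambda) = \lambda^0 + \lambda^{-|j-k|}$ (possibly after cancellation of extra stabilization-induced pairs), I would apply the normalization from \fullref{Applications} to pass to the Laurent polynomial invariant $\Gamma^-(\lambda)$; since the stated answer is already a genuine (two-term) Laurent polynomial, the normalization should amount to verifying no global shift is needed.

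The main obstacle will be the bookkeeping in the second step: pinning down exactly which critical points of the difference function survive (as opposed to being killed in stabilization/cancellation pairs) and computing their $\lambda$-degrees correctly, including the $j - k$ versus $k - j$ sign issue that forces the absolute value. I would handle this by doing the two ``extreme'' cases first — $j = k$, where the twist link is symmetric and one expects $\Gamma^-(\lambda) = \lambda^0 + \lambda^0 = \lambda^0 + \lambda^{-0}$, confirming the formula — and the case $k = 0$ (or $j=0$), where $L_{j,0}$ degenerates to a more familiar clasped configuration whose polynomial can be checked against a Legendrian contact homology computation, consistent with the DGA-agreement statement promised in the abstract. With these anchors fixed, the general $|j-k|$ dependence follows by tracking how each additional twist moves one critical value monotonically through the $\lambda$-grading, and an induction on $j$ (for fixed $k$) completes the proof.
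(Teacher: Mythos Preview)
Your proposal has a genuine gap: you never explain how to pass from the list of critical points of the difference function to the actual relative homology groups $H_k^-(\Delta)$.  Knowing the critical values and their Morse indices only gives you a chain complex; to get $\Gamma^-_\Delta$ you must either compute the boundary operator or find another way to control it.  Your proposal gestures at ``cancellation of extra stabilization-induced pairs'' and at comparing to a DGA computation, but neither is a mechanism for determining the homology of $\Delta$ on its own terms, and appealing to the DGA would be circular here since the paper's comparison theorem is proved only after both sides have been computed independently.

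The paper's proof (\fullref{Ljkpolys}) handles exactly this issue, and it is the heart of the argument.  One first positions $L_{j,k}$ so that $\Delta$ has precisely four nondegenerate critical points, with values $c_0^- < c_1^- < 0 < c_1^+ < c_0^+$ and relative indices $0,1$ and $j-k,\,j-k+1$ read off from the front via the branch/graph index formula of \fullref{indices} (not from ``the $l$-variable of the linear part'' as you suggest; the grading is the Morse index).  Then---and this is the step you are missing---one shows $H_*(\Delta^{b_1},\Delta^{a_1})=0$ for suitable intermediate levels $a_1,b_1$ by a \emph{deformation} argument: pull the bottom clasp apart through a (non-link) isotopy of the components so that the inner pair of critical values disappears, and invoke \fullref{lemma 3.10}.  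With this vanishing in hand, \fullref{proposition 4.2} (which packages the relevant long exact sequences) immediately yields $\Gamma^-(\lambda)[L_{j,k}] = \lambda^h(\lambda^0 + \lambda^{j-k})$, and normalization gives the stated formula.  Your induction on $j$ and the $j=k$, $k=0$ anchors are not needed and would not by themselves supply the missing homological control.
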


\noindent The topological link type of a twist link is dependent only on
the value of $j+k$.  So as a direct result of \fullref{Ljkpolys1}, we are
able to use generating family polynomials to distinguish several Legendrian twist links
with the same topological link type; see \fullref{Ljktype}.

Generating family polynomials can also be used to show that a link
is ordered.  $L = (\Lambda_1, \Lambda_0)$ is ordered if it is not
equivalent to $\overline L = (\Lambda_0, \Lambda_1)$.     In fact, a necessary condition
for a link to be unordered is that the $\Gamma^-$ polynomial
must be the same, up to a shift,  if $\lambda$ is replaced by $\lambda^{-1}$.
 
\begin{theorem}[See \fullref{order}]
Let $L=\left( \Lambda _{1},\Lambda _{0}\right) $ be a Legendrian link where $\Lambda _{1}$ and $\Lambda _{0}$
are maximal unknots.  If there does not exist an  $l\in \Z$ such that $\Gamma
^{-}\left( \lambda \right) \left[ L\right] =$ $\lambda ^{l}\cdot \Gamma
^{-}\left( \lambda ^{-1}\right) \left[ L\right] $, then the link $L$ is
ordered.
\end{theorem}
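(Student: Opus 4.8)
The plan is to combine the invariance of the normalized Laurent polynomial $\Gamma^{-}(\lambda)$, established in \fullref{Applications}, with a \emph{duality} relating the invariants of $L=(\Lambda_{1},\Lambda_{0})$ to those of the order-reversed link $\overline{L}=(\Lambda_{0},\Lambda_{1})$. Recall that $\Gamma^{-}_{\Delta}(\lambda)[L]$ is read off from a ``difference'' generating family built from LQ generating families $F_{1},F_{0}$ for the two components — roughly a function $F\simeq F_{1}\ominus F_{0}$ on a fiber product of the domains, whose critical points correspond to the Reeb chords between $\Lambda_{1}$ and $\Lambda_{0}$ and whose exponents of $\lambda$ record an integer ``level'' attached to each such chord. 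The central observation is that passing from $L$ to $\overline{L}$ simply interchanges the roles of $F_{1}$ and $F_{0}$, that is, it replaces the difference function $F$ by $-F$ (up to the stabilizations and fiber-preserving diffeomorphisms allowed by the Uniqueness Theorem), and the negated linear-quadratic data at infinity — linear term $-J(l)$, nondegenerate quadratic $-Q(v)$ — is again of standard LQ type.

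First I would isolate the duality lemma: if $F$ is the difference generating family associated to $L$, then a normal form of $-F$ is an admissible difference generating family for $\overline{L}$. Applying Morse theory to $-F$ versus $F$ gives a bijection on critical points under which the Morse index $k$ is replaced by its complement $N-k$, while the level integer attached to each critical point is negated; feeding this through the normalization that produces $\Gamma^{-}(\lambda)$ from $\Gamma^{-}_{\Delta}(\lambda)$ (and absorbing the resulting index/normalization shifts into a single monomial, using $\Gamma^{+}(\lambda)=\lambda\cdot\Gamma^{-}(\lambda)$ to merge the two pieces) yields an identity of Laurent polynomials
$$\Gamma^{-}(\lambda)[\overline{L}]=\lambda^{l_{0}}\cdot\Gamma^{-}(\lambda^{-1})[L]$$
for a fixed $l_{0}\in\Z$ depending only on the combinatorial data.

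With the duality lemma in hand the theorem follows by contradiction. Suppose $L$ is \emph{not} ordered, so $L$ is Legendrian equivalent to $\overline{L}$. By the invariance result of \fullref{Applications}, $\Gamma^{-}(\lambda)[L]=\Gamma^{-}(\lambda)[\overline{L}]$, and combining this with the displayed identity gives $\Gamma^{-}(\lambda)[L]=\lambda^{l_{0}}\cdot\Gamma^{-}(\lambda^{-1})[L]$. Thus $l=l_{0}$ exhibits exactly the relation excluded by hypothesis, a contradiction. Hence no Legendrian equivalence $L\simeq\overline{L}$ can exist, which is precisely the assertion that $L$ is ordered.

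The step I expect to be the main obstacle is the duality lemma, and within it the bookkeeping: verifying that $-F$ can be honestly returned to standard LQ form (tracking the sign change of the linear function $J$ and the index change of the quadratic $Q$, and absorbing these via stabilization), and then pinning down the exact shift $l_{0}$ so that the relation holds for the \emph{normalized} invariants $\Gamma^{-}(\lambda)$ rather than merely for the unnormalized $\Gamma^{-}_{\Delta}(\lambda)$. Matching the critical points of $F$ and $-F$ together with the level-negation and index-complement computation is the technical heart; the deduction of orderedness from it is then formal.
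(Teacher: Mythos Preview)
Your approach is essentially the paper's: show a duality $\Gamma^{-}(\lambda)[\overline{L}]=\lambda^{l_0}\Gamma^{-}(\lambda^{-1})[L]$ coming from $\overline{\Delta}=-\Delta$, then combine with invariance. Two small points of comparison. First, the paper derives the duality not by tracking Morse indices of critical points but by applying Poincar\'e--Lefschetz duality directly to the sublevel-set pairs, obtaining $H_k^{+}(\Delta)\simeq H^{N-k}(\Delta^{+\infty},\Delta^{0})\simeq H_{N-k}^{-}(\overline{\Delta})$; this sidesteps any need to argue that a Morse complex computes the relative homology, and your ``level integer'' language does not quite match the setup (the exponent of $\lambda$ is simply the homological degree). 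Second, your worry about returning $-F$ to standard LQ form is unnecessary: since $\Delta=F_1-F_0$ with each $F_i$ an LQ generating family, $\overline{\Delta}=F_0-F_1$ is already, by definition, a difference function for $\overline{L}$, with no normalization required.
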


\noindent The rational links are always topologically unordered, but by the above
polynomial calculations we show:

\begin{theorem}[See \fullref{rat_order}]
Let $L=\left( \Lambda _{1},\Lambda _{0}\right) $ be the rational Legendrian
link 
$\left( 2w_{n},k_{n},\dots ,2w_{1},k_{1},2w_{0}\right) $.  If the
vector $\left( 2w_{n},k_{n},\dots ,2w_{1},k_{1},2w_{0}\right) $ is not
palindromic, then the link $L$ is ordered.
\end{theorem}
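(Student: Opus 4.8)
The plan is to reduce everything to two facts already in hand: the explicit computation of $\Gamma^-(\lambda)[L]$ for rational links (\fullref{theorem 6.1}) and the ordering criterion (\fullref{order}), which says that $L$ is ordered as soon as no integer $l$ satisfies $\Gamma^-(\lambda)[L]=\lambda^l\cdot\Gamma^-(\lambda^{-1})[L]$. Thus it suffices to prove the contrapositive of the following: if some integer $l$ satisfies that identity, then the vector $(2w_n,k_n,\dots,2w_1,k_1,2w_0)$ is palindromic. (Recall that these links are always topologically unordered, so this is a genuinely Legendrian phenomenon.)

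First I would set $S_j=k_1+\dots+k_j$ with $S_0=0$, so that \fullref{theorem 6.1} reads
$$\Gamma^-(\lambda)[L]=\sum_{j=0}^n w_j\lambda^{-S_j},\qquad \Gamma^-(\lambda^{-1})[L]=\sum_{j=0}^n w_j\lambda^{S_j}.$$
Since each $k_i$ is a positive integer, the exponents satisfy $0=-S_0>-S_1>\dots>-S_n$; in particular they are pairwise distinct, and since each $w_j>0$ there is no cancellation. Hence the support of $\Gamma^-(\lambda)[L]$ is exactly $\{-S_0,\dots,-S_n\}$ and the coefficient of $\lambda^{-S_j}$ is $w_j$. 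Likewise $\lambda^l\cdot\Gamma^-(\lambda^{-1})[L]=\sum_{j=0}^n w_j\lambda^{l+S_j}$ has support $\{l+S_0<l+S_1<\dots<l+S_n\}$, with the coefficient of $\lambda^{l+S_j}$ equal to $w_j$.

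Now assume $\Gamma^-(\lambda)[L]=\lambda^l\cdot\Gamma^-(\lambda^{-1})[L]$. The two supports, each of cardinality $n+1$, must coincide; comparing their smallest elements gives $-S_n=l$, hence $l=-S_n$. Comparing the $j$-th smallest elements (for $j=0,\dots,n$) gives $-S_{n-j}=l+S_j=-S_n+S_j$, i.e. $S_n-S_{n-j}=S_j$. Subtracting consecutive instances of this identity yields $S_{n-j+1}-S_{n-j}=S_j-S_{j-1}$, that is $k_{n+1-j}=k_j$ for $j=1,\dots,n$, so the sequence $(k_1,\dots,k_n)$ is palindromic. Finally, equating coefficients at the matched exponent $-S_{n-j}=l+S_j$ gives $w_{n-j}=w_j$ for $j=0,\dots,n$, so $(w_0,\dots,w_n)$ is palindromic as well. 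Together these two conditions say precisely that $(2w_n,k_n,\dots,2w_1,k_1,2w_0)$ reads the same forwards and backwards, which proves the contrapositive and hence, via \fullref{order}, the theorem.

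I do not expect a genuine obstacle here: the only point demanding care is the bookkeeping with the supports, where positivity of the $k_i$ and of the $w_i$ is exactly what guarantees that no two monomials collide and nothing cancels, so that the support and the coefficient function of each Laurent polynomial can be read off unambiguously from \fullref{theorem 6.1}. Once that observation is in place, the exponent-matching and coefficient-matching steps are completely forced.
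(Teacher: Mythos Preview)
Your argument is correct and follows precisely the route the paper takes: the paper's proof is the one-line statement that \fullref{theorem 6.1} (together with its vector reformulation) and \fullref{order} (together with its vector reformulation) immediately yield the result. You have simply spelled out the verification---that the symmetry condition on $\Gamma^-$ forces the palindromicity of $(2w_n,k_n,\dots,2w_0)$---which the paper leaves to the reader.
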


Another use of generating family polynomials is in distinguishing rational
links that differ by a Legendrian ``flyping" operation.  Flypes are discussed
in \fullref{poly_calcs}.
 Both horizontal and vertical flyping procedures, when
applied to a  Legendrian link, result in a link of the same topological link
type.  It is shown by Traynor \cite{Generating} that vertical flypes when applied to a 
Legendrian link $L = \left( 2w_{n},k_{n},\dots ,2w_{1},k_{1},2w_{0}\right) $
preserve the Legendrian link type.  However polynomial calculations for horizontal
flypes show that nonequivalent Legendrian links may be produced.  

\begin{theorem}[See \fullref{flypesthm}]
Let $L=\left( 2w_{n},k_{n},2w_{n-1}^{p_{n-1}},\dots
,k_{1},2w_{0}^{p_{0}}\right) $ be the Legendrian link obtained 
by doing $p_i$ horizontal flypes to the $w_i$ horizontal entry of the
rational link $(2w_n, k_n, \dots, k_1, 2w_0)$.  For $j=0,1,\dots ,n-1$, let $\sigma \left(
j\right) =1+\sum_{i=0}^{j}p_{i}$ mod $2$.  Then 
\begin{equation*}
\Gamma ^{-}\left( \lambda \right) \left[ L\right] =\lambda ^{m}\cdot \Bigl[
w_{0}\lambda ^{0}+\sum_{i=1}^{n}w_{i}\lambda ^{[ \left( -1\right)
^{\sigma \left( 0\right)}k_{1}+\dots +\left( -1\right) ^{\sigma \left(
i-1\right)}k_{i}]}\Bigr],
\end{equation*}
where $m$ is chosen so that $\Gamma ^{-}$ has degree zero.
\end{theorem}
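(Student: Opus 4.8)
The plan is to induct on the total number of horizontal flypes, $N=\sum_{i=0}^{n-1}p_i$. The base case $N=0$ is precisely the rational link computation: with every $p_i=0$ one has $\sigma(j)=1$, hence $(-1)^{\sigma(t-1)}=-1$, so the bracketed expression reduces to $w_0\lambda^0+\sum_{i=1}^n w_i\lambda^{-(k_1+\dots+k_i)}$, which already has degree zero; thus $m=0$ and the formula agrees with \fullref{theorem 6.1}. For the inductive step I would reduce everything to a single lemma describing the effect on $\Gamma^-$ of one horizontal flype on the $w_j$ entry. Recall that $\Gamma^-(\lambda)[L]$ is the generating family polynomial constructed in \fullref{Applications}, a Morse-theoretic invariant built from the LQ generating families of the two components $\Lambda_1,\Lambda_0$ — which exist and are essentially unique by the Existence and Uniqueness Theorems — and graded by $\lambda$; in analogy with \cite{Generating} it is read off from the critical points of a difference of these generating families. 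For rational links the computation of this polynomial (\fullref{theorem 6.1}) proceeds by assembling the generating families out of standard blocks, one per horizontal or vertical entry, the vertical entry $k_i$ contributing a shift of $k_i$ in the $\lambda$-grading to every block lying above it.

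The key lemma I would prove is: performing one horizontal flype on the $w_j$ entry replaces $\Gamma^-(\lambda)[L]$, before normalization, by the polynomial obtained by negating $k_{j+1},k_{j+2},\dots,k_n$ wherever they occur in a $\lambda$-exponent, while leaving every coefficient $w_i$ and every lower shift $k_1,\dots,k_j$ unchanged. The geometric mechanism is that a horizontal flype rotates the sub-tangle lying above the $j$-th horizontal block by $180^\circ$: this rotation reverses the handedness of exactly the vertical twist regions $k_{j+1},\dots,k_n$, so that each of them now contributes a $\lambda$-grading shift of $-k_i$ in place of $k_i$, while it carries each horizontal block $w_{j+1},\dots,w_n$ to a congruent block, so that the coefficient $w_i$ of the corresponding term of $\Gamma^-$ is unchanged and only its $\lambda$-exponent, the accumulated shift, is altered; the blocks $w_0,\dots,w_j$ and the shifts $k_1,\dots,k_j$ lie below the flyped region and are untouched. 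To make this rigorous I would re-derive the LQ generating family of the flyped link directly from the same building blocks used to prove \fullref{theorem 6.1}, verify that it is again linear-quadratic at infinity so that (by the Uniqueness Theorem) $\Gamma^-$ is well-defined for the flyped link, and then track the signs of the accumulated grading shifts block by block. This sign bookkeeping is the main obstacle: one must check that the flype negates precisely the upper twist shifts and nothing else — in particular that it leaves each $w_i$ and the entire lower structure intact — and that the normalization conventions of \fullref{Applications} still apply to it.

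Granting the lemma, the theorem follows by composing flypes. Performing the $p_j$ flypes at the $w_j$ entry multiplies the coefficient of each $k_t$ with $t>j$ (equivalently $t-1\ge j$) by $(-1)^{p_j}$ wherever it occurs in an exponent, and these operations for different $j$ commute; iterating over $j=0,1,\dots,n-1$, the coefficient of $k_t$ in the exponent of $w_i$ (for $1\le t\le i$) becomes its original value $-1$ times $(-1)^{p_0+p_1+\dots+p_{t-1}}$, that is, $(-1)^{1+p_0+\dots+p_{t-1}}=(-1)^{\sigma(t-1)}$. Hence the exponent of $w_i$ becomes $\sum_{t=1}^i(-1)^{\sigma(t-1)}k_t$ and the exponent of $w_0$ remains $0$, which is the bracketed expression in the statement. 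Finally, since these exponents may now be positive, one multiplies by the appropriate power $\lambda^m$ to normalize $\Gamma^-$ to have degree zero, completing the induction and the proof.
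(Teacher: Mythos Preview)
Your overall strategy---induct on the total number of flypes and isolate the effect of a single horizontal flype on the polynomial---is sound, and your algebra in the composition step is correct: flyping once at $w_j$ should multiply the coefficient of each $k_t$ with $t>j$ by $-1$, and iterating gives exactly $(-1)^{\sigma(t-1)}$.

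The paper organizes the argument differently.  Rather than tracking the polynomial through one flype at a time, it (following \cite[Theorem~6.2]{Generating}) directly establishes, by induction, that the \emph{fully flyped} link admits a difference function $\Delta$ with exactly $2(w_0+\cdots+w_n)$ nondegenerate critical points, arranged so that the $2w_i$ critical points sit on branches $W_1^i\subset\Lambda_1$, $W_0^i\subset\Lambda_0$ with
\[
i_{Br}(W_1^i)-i_{Br}(W_0^i)=(-1)^{\sigma(0)}k_1+\cdots+(-1)^{\sigma(i-1)}k_i,
\]
and with the critical values nested as in \fullref{theorem 6.1}.  Once this is in hand, the deformation argument and \fullref{proposition 4.2} apply verbatim.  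So the paper computes the branch indices once, for the final link; you propose to compute the \emph{change} in branch indices under each flype and compose.

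The place where your sketch is thin is exactly the key lemma.  The phrase ``reverses the handedness of the vertical twist regions $k_{j+1},\dots,k_n$'' is a topological intuition, but the grading here is not a linking-number sign: it is the branch index $i_{Br}$, which counts up-cusps minus down-cusps along the front.  What a Legendrian horizontal flype actually does to the flipped sub-tangle is swap up-cusps with down-cusps there, and that is what flips the sign of each $k_t$ contribution to $i_{Br}(W_1^i)-i_{Br}(W_0^i)$ for $t>j$.  Making this precise, and checking that the critical-value nesting and the vanishing of the intermediate relative homologies (hypothesis (3) of \fullref{proposition 4.2}) survive the flype, is exactly the content of the paper's inductive construction of $\Delta$.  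In other words, your lemma, once made rigorous, \emph{is} the inductive step of the paper's proof, just phrased as a statement about polynomials rather than about difference functions.  Either packaging works; neither avoids the branch-index bookkeeping.
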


\noindent For an example of a rational link that is not equivalent to a flyped version
of the link, see \fullref{fig:intro_flypes}.

\begin{figure}[ht]
\labellist
\pinlabel {$\not=$} at 283 85
\endlabellist
\centerline{\includegraphics[height=1in]{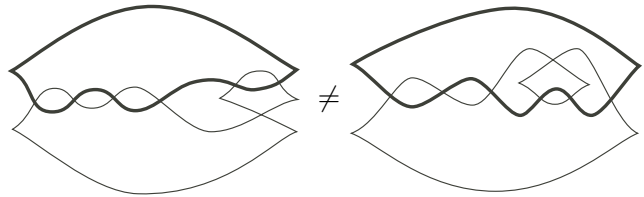}}
\caption{The link $\left( 2,1,4\right) $ is not equivalent to its flyped
version, $\left( 2,1,4^{1}\right)$}
        \label{fig:intro_flypes}
\end{figure}

Finally, we compare
generating family polynomials to decomposition number polynomials  and homology polynomials obtained from the DGA.  The decomposition number polynomials are quite different
from the generating function polynomials.  However,
for both rational links and twist links, the (negative) generating
family polynomials are the same as the (negative) DGA homology polynomials;
see \fullref{same_polys}.

\subsubsection*{Acknowledgements}  We are thankful to the members of the Philadelphia
Area Contact Topology (PACT) Seminar  for their useful comments during a mini-series presentation
of these results.  We are also grateful for the many careful and detailed suggestions of the  
reviewer.

\section{Background information}
\label{Background}

If $M$ is an $n$--manifold, then the $1$--jet space of $M$, $\mathcal{J}^{1}\left( M\right) 
= T^*(M) \times \R$,
 is a $\left( 2n+1\right)$--manifold with a 
contact structure $\xi _{\text{std}}$ on $\mathcal{J}^{1}\left( M\right)
=\left\{ \left( x,y,z\right) \right\} $ given by the kernel of the $1$--form $\alpha =dz-ydx$.   There are
no integral $j$--dimensional submanifolds of the $(2n{+}1)$--dimensional
$\jetM$ when $j > n$. However, there are numerous integral $n$--dimensional
submanifolds.  Such submanifolds are called \emph{Legendrian}.   For example,
for any $f\co M\rightarrow 
\R$, $j^{1}\left( f\right) :=
\bigl\{ \bigl( x, \frac{\partial f}{\partial x},f\left( x\right) \bigr)
\mid x\in M\bigr\} $
is a Legendrian submanifold of $\jetM$. 
We will pay special attention to the $3$--dimensional contact manifold
$\jetR = \R^3$.

  A \emph{Legendrian knot} is a closed and connected $1$--dimensional Legendrian submanifold  
 in a $3$--dimensional contact
manifold.   A \emph{Legendrian link} is the union
of one or more non-intersecting Legendrian knots.  
The \emph{front} of a Legendrian curve is its image under 
the  \emph{front projection\label{front projection}}
given by $\pi _{xz}\left( x,y,z\right) =\left( x,z\right) $.   
Given a Legendrian curve $L$ in $\jetR$,
 let $\pi _{xz}\left( L\right) =C$.    $C$ is an immersed curve with nonvertical tangents and
semi-cubic cusps (see \fullref{fig:cusp}), which generically has only
double points.  If $L$ is
free of self-intersections, any crossing of $C$ must be a transverse
intersection. 
 Conversely, any such curve in $\R^{2}$ 
determines a Legendrian curve in $\R^{3}$.
 We may use the fact that $y=\frac{dz}{dx}
$ for any $\left( x,y,z\right) \in L$, a condition imposed by the contact
structure, to recover the third coordinate of $L$.  

\begin{figure}[ht]\label{fig:cusp}
\labellist\small
\pinlabel {$z_p$} [r] at 72 305
\pinlabel {$p$} [br] at 72 143
\pinlabel {$x_p$} [t] at 305 143
\pinlabel {$C$} [br] at 277 270
\endlabellist
\centerline{\includegraphics[height=1.5in]{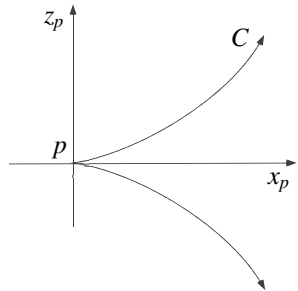}}
\caption{A semi-cubic cusp is a point $p\in C$ for which
there exist coordinates $x_{p},z_{p}$ such that $p=(0,0)$ and
${C=\bigl\{(x_{p},z_{p}) \mid z_{p}^{2}=\pm x_{p}^{3}\bigr\}}$
in a neighborhood of $p$.
}
\end{figure}

The main question in the study of Legendrian links is whether or not two
links are equivalent.  We say two links $L = (\Lambda_n, \dots, \Lambda_0)$, 
$\widetilde{L} = (\widetilde \Lambda_n, \dots, \widetilde \Lambda_0)$ are \emph{equivalent} if there exists a smooth, one-parameter family $L_{t}$ of
Legendrian links such that $L_{0}=L$ and $L_{1}=\widetilde{L}$, or, equivalently,
there exists a contact isotopy of the ambient space taking $\Lambda_i$ to $\widetilde \Lambda_i$
for $i = 0, \dots, n$.  Two topologically equivalent Legendrian links will be
distinct if, for some $i$, components $\Lambda_i$ and $\widetilde \Lambda_i$ do not have the same classical Legendrian integer invariants
given by the rotation and Thurston--Bennequin numbers.   For more background on
Legendrian knots and links, see, for example, Etnyre \cite{Etnyre}.  In the following, we will extend the work in \cite{Generating} and construct 
new invariants for Legendrian links using
the technique of generating families.

Generating families provide a way to encode a Legendrian submanifold of $\jetM$
by a real-valued function on $M\times \R^{k}$.
Suppose we have a smooth function $F\co M\times \R^{k}\rightarrow 
\R$, $\left( x,v\right) \mapsto F\left( x,v\right) ,$ such that $0$
is a regular value of the map 
$(\frac{\partial F}{\partial v_1}, \dots, \frac{\partial F}{\partial v_n})\co M\times \R^{k}\rightarrow \R^{k}$.  We define $\Sigma _{F}$, the \emph{critical locus} of $F$, as
\begin{equation*}
\Sigma _{F}:=\bigl\{ \left( x,v\right) \in M\times \R^{k}\mid \tfrac{\partial F}{\partial v_{i}}\left( x,v\right) =0\text{ for }i=1,2,\dots
,k\bigr\}.
\end{equation*}
By the preimage theorem, $\Sigma _{F}$ is a one-dimensional submanifold of 
$M\times \R^{k}$.  Define an immersion $i_{F} \co \Sigma
_{F}\rightarrow \mathcal{J}^{1}\left( M\right) $ by 
\begin{equation*}
i_{F}\left( x_{0},v_{0}\right) =\allowbreak \left( x_{0},\partial
_{x}F\left( x_{0},v_{0}\right) ,F\left( x_{0},v_{0}\right) \right) .
\end{equation*}
When $i_{F}$ is an embedding, $L:=i_{F}\left( \Sigma _{F}\right) $ is a
Legendrian submanifold of $\mathcal{J}^{1}\left( M\right) $.  We say that $F$ \emph{generates} $L$, or $F$ is the \emph{generating family} for $L$. 
In the following, we will start with a Legendrian submanifold $L\subset \mathcal{J}^{1}\left( M\right) $ and seek a generating family $F\co M\times \R^{k}\rightarrow \R$ for $L$.

Clearly the choice of a generating family for a given Legendrian $L\subset 
\mathcal{J}^{1}\left( M\right) $, if one exists, is not unique.  If $F\co M\times \R^{k}\rightarrow \R$ generates $L$ then so does,
for example, $F^{\prime}\co M\times \R^{k+1}\rightarrow \R$,
where $F^{\prime}\left( x,v_{1},\dots ,v_{k},v_{k+1}\right) =F\left(
x,v_{1},\dots ,v_{k}\right) +v_{k+1}^{2}$.  Therefore we wish to work with
equivalence classes of families rather than with the families themselves. 

\begin{definition} \label{defn:gf_equivalence}
Two generating families $F_i \co M\times \R^{k_i}\rightarrow \R$, $i = 1,2$
are \emph{equivalent}  if and only
if they can be made equal after a succession of  fiber-preserving diffeomorphisms
and stabilizations; these operations on the generating family  are defined as follows:

\begin{enumerate}
\item
Given a generating family $F\co M\times \R^{k}\rightarrow \R$, 
suppose $\Phi \co M\times \R^{k}\rightarrow M\times \R^{k}$ is
a fiber-preserving diffeomorphism, i.e.,  $\Phi \left( x,v\right) =\left( x,\phi_x \left(
v\right) \right) $ for diffeomorphisms $\phi_x $. Then $F^\prime =F \circ
\Phi$ is said to be obtained from $F$ by a \emph{fiber-preserving
diffeomorphism}.

\item
Given a generating family $F \co M\times \R^{k}\rightarrow \R$, 
let $Q \co \R^{j}\rightarrow \R$ be a quadratic function. 
Define $F^\prime \co M\times \R^{k}\times \R^{j}\rightarrow 
\R$ by $F^\prime \left( x,v_{1},v_{2}\right) =\left( F\oplus
Q\right) \left( x,v_{1},v_{2}\right) =F\left( x,v_{1}\right) +Q\left(
v_{2}\right) $. Then $F^\prime$ is said to obtained from $F$ by a
\emph{stabilization}.
\end{enumerate}
\end{definition}

If two families are equivalent, we can get from one
to another by performing one stabilization followed by one diffeomorphism
(see Th\'eret \cite{Theret}).

There is a parallel theory of generating families in the symplectic
category.  In a symplectic manifold, Lagrangian submanifolds are objects of
central importance, and the theory of generating families gives one a way to
encode some Lagrangians in $T^{\ast}\left( M\right) $ by a function $F\co M\times \R^{N}\rightarrow \R$.  In this version, the same
procedure is used to construct $\sigma _{F}$, but now the associated
immersion $i_{F}$ does not include the value of $F$. 

In both the contact and symplectic settings, these generating families are
defined on noncompact domains. Analytically it is convenient to consider
functions that are ``well-behaved'' outside of a compact set. A common
convention has been to consider generating families that are ``quadratic at
infinity.''  This means that outside of a compact set, $F\left( x,v\right)
=Q\left( v\right) $, where $Q$ is a nondegenerate quadratic function.
See, for example, Viterbo \cite{Viterbo} and Th\'eret \cite{Theret}.

Quadratic at infinity generating families can generate only particular
  Legendrian or Lagrangian submanifolds.  It is not hard
to see, for example, that the maximal Legendrian unknot pictured in \fullref{fig:trivial} does not have a quadratic at infinity generating family.  However,
this Legendrian knot will have a ``linear-quadratic" at infinity generating family,
abbreviated as an LQ generating family.  

\begin{definition}
A generating family $F\co M\times \R\times \R^{k}\rightarrow 
\R$, $k\geq 0$ is \emph{linear-quadratic at infinity} if we have $F\left( x,l,v\right) =J_{x}\left( l\right) +Q_{x,l}\left( v\right) $ outside
a compact set, where $J_{x}$ is a nonzero linear function of $l$ for each $x$ and $Q_{x,l}$ is a nondegenerate quadratic function of $v$ for each pair 
$\left( x,l\right) \in M\times \R$.  $F$ is \emph{special
linear-quadratic at infinity} if $F\left( x,l,v\right) =J\left( l\right)
+Q\left( v\right) $ outside a compact set for some nonzero linear function $J$ and some nondegenerate quadratic function $Q$.
\end{definition}

\noindent Note that the added requirement for a special LQ generating family
is that the linear and quadratic parts be independent of the base point $x\in M$.    In fact, any LQ generating family is equivalent to a special LQ generating
family.   This can be proved following the argument of the proof of
\cite[Proposition~2.12]{Theret}. 

\fullref{fig:gffig} sketches the graphs of two LQ generating families for the
maximal Legendrian unknot.  Note that the generating families differ by a stabilization, and
hence they are equivalent.

\begin{figure}[ht]
\labellist\small
\pinlabel {(a)} [b] at 120 0
\pinlabel {(b)} [b] at 410 0
\endlabellist
\centerline{\includegraphics[width=5in]{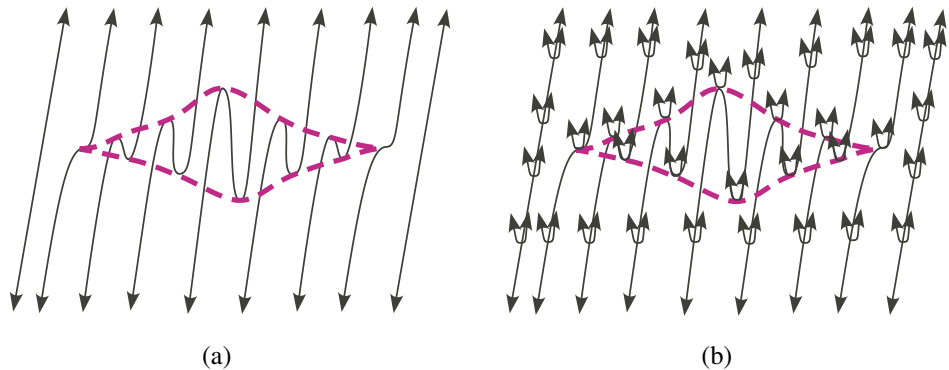}}
\caption{The LQ generating families shown in (a) and (b) generate the
same maximal unknot, which is sketched in with a broken line.
The generating family in (b) is a stabilization of the one in (a).}
\label{fig:gffig}
\end{figure}

\section{Existence and uniqueness of LQ generating families} \label{Existence&Uniqueness}
 
  In Sections  \ref{Persistence} and \ref{Uniqueness}, we prove that if we have a Legendrian $L\subset  \jetRm $ with a unique  LQ
generating family  then 
$\varphi _{1}\left(L\right) $ also has a unique LQ generating family for any
contact isotopy $\varphi _{t}$ of $\jetRm $. 
Sub\fullref{Unique_Unknot} contains the proof of uniqueness of an LQ
generating family for the maximal unknot in $\jetR = \R^3$ 
   pictured in \fullref{fig:trivial}.
 Th\'{e}ret's work
in \cite{Theret} forms a basis for the theorems and proofs in this section.
 
\subsection{Persistence of LQ generating families under isotopies}\label{Persistence}

Let us first introduce some notation for use in this subsection and the next. 
Let $M$ be $\R^m$ or $\Delta_n \times \R^{m-n} \subset \R^m$ where $\Delta_n$
is the standard $n$--simplex in $\R^n$.   Results in this section will also apply to
the case where $M$ is a  closed manifold but this setting will not be needed in this paper.
 Let $\mathcal{L}$
be the set of Legendrians in $\mathcal{J}^{1}\left( M\right) $.  For each integer $k\geq 0$, let $\mathcal{F}_{k}$ be
the set of all LQ generating families defined on $M\times \R\times 
\R^{k}$.  If $k$ is not specified we will simply use $\mathcal{F}$.
 Let $\pi \co \mathcal{F}\rightarrow \mathcal{L}$ be the map taking a
generating family $f$ to the Legendrian in $\mathcal{L}$ generated by $f$.

The main result of this subsection is that the map $\pi \co \mathcal{F}\rightarrow 
\mathcal{L}$ is a smooth Serre fibration (up to equivalence).  In
particular, this implies that if $\gamma \co \left[ 0,1\right] \rightarrow 
\mathcal{L}$ is a path and $\gamma \left( 0\right) $ has a generating family
(i.e. there exists $F\in \pi ^{-1}\left( \gamma \left( 0\right) \right) $),
then there is a lift of the path to $\tilde{\gamma} \co \left[ 0,1\right]
\rightarrow \mathcal{F}$ such that $\tilde{\gamma}(0) = F$ up to equivalence and
$\pi \left( \tilde{\gamma}\left( t\right)
\right) =\gamma \left( t\right) $ for all $t\in \left[ 0,1\right] $.  More
generally, if $f\co \Delta _{n}\rightarrow \mathcal{L}$ has a smooth lift $F\co \Delta _{n}\rightarrow \mathcal{F}$, then for any smooth homotopy $f_{t}\co \Delta _{n}\rightarrow \mathcal{L}$ of $f$ there exists a smooth
homotopy $F_{t}\co \Delta _{n}\rightarrow \mathcal{F}$ of (perhaps an
equivalent version of) $F$ 
satisfying $\pi \circ F_{t}=f_{t}$. We will prove this Serre fibration property in two
stages.  We will first show that the above path lifting property holds (see
\fullref{path lifting property}).  This follows from a Legendrian
version of ``Chekanov's formula".  We will then prove a lemma (\fullref{Theret 4.3}) that allows us to view a homotopy of Legendrians as a single
Legendrian in a larger space.  \fullref{path
lifting property}  will then be applied again.

\begin{theorem}
\label{path lifting property}Let $M$ be $\R^m$ or 
$\Delta_n \times \R^{m-n} \subset \R^m$, and let $\Lambda $
be a Legendrian submanifold of $\mathcal{J}^{1}\left( M\right)$.  Let $\left( \kappa _{t}\right) _{t\in \left[ 0,1\right]}$ be a compactly
supported contact isotopy of $\mathcal{J}^{1}\left( M\right)$, $\kappa_0 = \operatorname{id}$.  Assume
that $\Lambda $ has an LQ generating family $F\co M\times \R\times \R^{k}\rightarrow \R$.  Then
there exists an integer $j\geq 0$ and a path $\left( F_{t}\right) _{t\in \left[ 0,1\right]}$ of LQ generating families defined on $M\times \R\times \R^{k}\times \R^{j}$ such that

\begin{enumerate}
\item $F_{0}\left( x,l,v,w\right) =F\left( x,l,v\right) +Q\left( w\right) $,
where $Q$ is a nondegenerate quadratic function on $\R^{j};$

\item $F_{t}=F_{0}$ outside a compact set;

\item $F_{t}$ generates $\kappa _{t}\left( \Lambda \right) $ for $t\in \left[
0,1\right] $.
\end{enumerate}
\end{theorem}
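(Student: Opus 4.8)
The plan is to adapt Chekanov's formula to the Legendrian LQ setting. The key point is that a compactly supported contact isotopy $\kappa_t$ of $\jetM = T^*M \times \R$ can be lifted, via the standard correspondence between contact isotopies of $\jetM$ and Hamiltonian-type isotopies of $T^*(M\times\R)$ (or equivalently by working directly with the contact Hamiltonian), to a one-parameter family of functions that modifies a given generating family in a controlled way. First I would recall the construction: if $F\co M\times\R\times\R^k\to\R$ generates $\Lambda$, then $\Lambda$ is the image of the critical locus $\Sigma_F$ under $i_F$, and I want $F_t$ whose critical locus maps to $\kappa_t(\Lambda)$. Following Chekanov (as used by Th\'eret, \cite[Section~4]{Theret}), one introduces new fiber variables $w\in\R^j$ — where $j$ is chosen large enough to "absorb" the generating function of the isotopy on a compact region — and writes $F_t(x,l,v,w)$ as an explicit composition built from $F$, the generating function of $\kappa_t$, and a nondegenerate quadratic $Q(w)$ in the new variables. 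The composition formula is designed precisely so that its critical locus is in bijection with $\Sigma_F$ after transport by $\kappa_t$, and so that the induced immersion $i_{F_t}$ has image $\kappa_t(\Lambda)$; this gives items (1) and (3) directly, with (1) following because at $t=0$ the isotopy is the identity, so the extra variables contribute only $Q(w)$.

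The next step is to verify that each $F_t$ is genuinely an \emph{LQ} generating family, not merely a generating family — i.e.\ that it remains linear-quadratic at infinity. Since $\kappa_t$ is compactly supported, there is a compact set $K\subset\jetM$ outside which $\kappa_t$ is the identity for all $t$; correspondingly the modification to $F$ happens only over a compact region of $M\times\R\times\R^k\times\R^j$, so outside a (possibly larger) compact set $F_t$ agrees with $F_0 = F\oplus Q$, which is $J(l)+Q'(v)+Q(w)$ at infinity — still of the form (nonzero linear in $l$) $+$ (nondegenerate quadratic in the remaining fiber variables). This simultaneously establishes item (2). I would also need to check the regularity hypothesis — that $0$ is a regular value of the fiberwise derivative map — which is preserved because near infinity the fiber derivative is that of $J\oplus Q'\oplus Q$, which is clearly transverse to $0$, and on the compact part it follows from the corresponding property of $F$ together with the fact that $\kappa_t$ is a diffeomorphism.

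The main obstacle I anticipate is the bookkeeping in the Chekanov composition formula and in particular choosing $j$ and verifying non-degeneracy of the quadratic part $Q$ on $\R^j$ uniformly in $t$: the generating function of a general compactly supported contact isotopy need not itself be quadratic at infinity, so one must cut it off and pad with auxiliary quadratic variables in a way that does not destroy the LQ structure or the embedding property of $i_{F_t}$. Concretely, one decomposes $\kappa_t$ as a composition of isotopies each of which is $C^1$-small (so that each admits a genuine generating function defined via the graph construction), handles each factor by adding one block of fiber variables, and then composes; ensuring that the total fiber dimension $k+j$ is finite and that the patched-together $F_t$ is smooth in $(t,x,l,v,w)$ and constant outside a compact set requires care. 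Once this is set up, smoothness of the path $t\mapsto F_t$ and the three listed properties fall out of the construction, and this theorem then feeds into the proof of the Serre fibration property (\fullref{Serre}) exactly as the path-lifting step.
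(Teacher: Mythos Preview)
Your proposal is correct in its overall architecture and lands on the same core ingredients as the paper: decompose $\kappa_t$ into $C^1$--small pieces, apply Chekanov's composition formula at each step to adjoin new fiber variables, and then verify that compact support of the isotopy forces the result to agree with $F\oplus Q$ outside a compact set, preserving the LQ structure.  Where you diverge is in the mechanism for passing from the contact setting to one where Chekanov's formula is available.  You suggest either working directly with the contact Hamiltonian or lifting to $T^*(M\times\R)$; the paper instead first extends from $M=\Delta_n\times\R^{m-n}$ to $\R^m$ (a step you do not mention), and then passes to the \emph{symplectization}: it associates to $\Lambda$ an $\R^+$--equivariant Lagrangian $\mathcal L_\Lambda\subset T^*(\R^m\times\R^+)$, lifts $\kappa$ to a symplectomorphism $\Psi_\kappa$ of that cotangent bundle with graph Lagrangian $\widetilde\Gamma_{\Psi_\kappa}\subset T^*(\R^{2m+1}\times\R^+)$, composes generating families there, and finally takes a ``slice'' to recover an LQ family for $\kappa_t(\Lambda)$.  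The advantage of the paper's route is that the composition formula and the verification that LQ is preserved are then direct citations of \cite{CircularHelix}; your more direct route would require setting up a contact version of the composition formula from scratch, which is feasible but is exactly the bookkeeping you flag as the ``main obstacle.''
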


\begin{proof}
\cite[Theorem~A.1]{CircularHelix} is an analogous theorem with
quadratic (rather than linear-quadratic) at infinity conditions.  A careful
check of the equation shows that the proof of Theorem A.1 generalizes to the
LQ situation.  The following is a brief summary of the proof.

The first step is to realize that even when $M = \Delta_n \times \R^{m-n}$
 we can work in the setting of $\mathcal{J}^{1}\left( \R^{m}\right) $ rather than in  $\mathcal{J}^{1}\left(
M\right) $. $\Delta_n \times \R^{m-n}$ naturally lies in $\R^m$.  By an extension of
 $F$ to 
$\R^{m}\times \R\times \R^{k}$, we have that $\Lambda 
$ embeds into a Legendrian submanifold $\Lambda ^{m}\subset
\mathcal{J}^{1}\left( \R^{m}\right) $.  As shown in
\cite[Proposition~A.2]{CircularHelix}, $\kappa _{t}$ extends to a compactly supported contact
isotopy $\kappa _{t}^{m}$ of $\jetRm$, and to show $\kappa
_{t}\left( \Lambda \right) $ has an LQ generating family it suffices to
prove that $\kappa _{t}^{m}\left( \Lambda ^{m}\right) \subset \mathcal{J}^{1}\left( \R^{m}\right) $ has an LQ generating family.

Next we translate the problem into a symplectic situation by looking at a
certain $\R^{+}$--equivariant Lagrangian $\mathcal{L}_{\Lambda}$ in $T^{\ast}\left( \R^{m}\times \R^{+}\right) $ that
corresponds to a Legendrian $\Lambda \subset \mathcal{J}^{1}\left(
\R^{m}\right) $ (see \cite[Equation~A.3]{CircularHelix}).  If $\Lambda
\subset \mathcal{J}^{1}\left( \R^{m}\right) $ has an LQ generating
family, then so does $\mathcal{L}_{\Lambda}\subset T^{\ast}\left( \R^{m}\times \R^{+}\right) $.  

Next we associate a symplectic diffeomorphism $\Psi _{\kappa}$ of $T^{\ast
}\left( \R^{m}\times \R^{+}\right) $ to a contact
diffeomorphism $\kappa$ isotopic to the identity.  This has a corresponding
Lagrangian submanifold $\widetilde{\Gamma}_{\Psi _{\kappa}}$ of $T^{\ast
}\left( \R^{2m+1}\times \R^{+}\right) $.  When the
diffeomorphism $\Psi _{\kappa}$ is sufficiently close to the identity,
these Lagrangians have simple generating families.  Then by ``Chekanov's
formula,'' generating families for $\mathcal{L}_{\Lambda}$ and $\widetilde{\Gamma}_{\Psi _{\kappa}}$ can be ``composed'' to obtain a generating
family for $\Psi _{\kappa}\left( \mathcal{L}_{\Lambda}\right)
=\mathcal{L}_{\kappa \left( \Lambda \right)}$ (see
\cite[Proposition~A.5]{CircularHelix} or \cite[Section~4]{Symplectic}).  A careful analysis of these
equations shows that if the generating family for $\mathcal{L}_{\Lambda}$
is LQ, then the family for $\Psi _{\kappa}\left( \mathcal{L}_{\Lambda
}\right) $ will also be LQ.  By breaking the isotopy $\kappa _{t}$ into a
composition of small diffeomorphisms, this procedure produces an LQ
generating family for $\mathcal{L}_{\kappa _{t}\left( \Lambda \right)}$ for
all $t$.  See, for example, \cite[Section 4]{Symplectic}.

Lastly, \cite[Proposition~A.6]{CircularHelix} shows that if $G$ is an
LQ generating family for $\mathcal{L}_{\Lambda}\subset T^{\ast}\left( 
\R^{m}\times \R^{+}\right) $, then a ``slice'' of $G$ will
be an LQ generating family for $\Lambda $. Hence since $\mathcal{L}_{\kappa
_{t}\left( \Lambda \right)}$ has an LQ generating family for all $t\in \left[ 0,1\right] $, $\kappa _{t}\left( \Lambda \right) $ has an LQ
generating family, say $F_{t}$, for all $t\in \left[ 0,1\right] $.  We can
see through checking the proofs in \cite{CircularHelix} that the remaining
conditions on $F_{t}$ are satisfied.\hfill
\end{proof}

The next lemma shows, in particular, that from a family of Legendrian
submanifolds of $\jetR$ parameterized by points in $\Delta _{n}$ one can
construct a single, $\left( n+1\right)$--dimensional Legendrian submanifold of $\mathcal{J}^{1}\left( \Delta _{n}\times \R \right) $.  The lemma is the Legendrian
version of \cite[Lemma~4.3]{Theret}. Note that when we say $S\co \Delta _{n}\times M\times \R\times \R^{k}\rightarrow \R$ is linear-quadratic at
infinity, we mean it is linear in the $\R$ variable and quadratic in
the $\R^{k}$ variable.

\begin{lemma}
\label{Theret 4.3}Note first that $\mathcal{J}^{1}\left( \Delta _{n}\times
M\right) \cong T^{\ast}\left( \Delta _{n}\right) \times \mathcal{J}^{1}\left( M\right) $. \newline
Let $\rho \co \mathcal{J}^{1}\left( \Delta _{n}\times
M\right) \rightarrow \mathcal{J}^{1}\left( M\right) $ be the associated
projection.
\begin{enumerate}
\item[(a)]  Let $f\co \Delta _{n}\times B \rightarrow \mathcal{J}^{1}\left( M\right) $
be a differentiable map such that each $f_{a}$, defined by $f_{a}\left(
s\right) =f\left( a,s\right) $ is a Legendrian embedding of $B$ in $\mathcal{J}^{1}\left( M\right) $.  Then there is a map $v\co \Delta _{n}\times
B \rightarrow \left( \R^{n}\right) ^{\ast}$ such that 
 $$
\qquad L:=\left\{ \left( a,v\left( a,s\right) ,f_{a}\left( s\right) \right) \mid
a\in \Delta _{n},s\in B \right\} \subset T^{\ast}\left( \Delta _{n}\right)
\times \mathcal{J}^{1}\left( M\right)
$$
is a Legendrian submanifold in $\mathcal{J}^{1}\left( \Delta _{n}\times
M\right) $.  Furthermore, if $S_{a}\co M\times \R\times \R^{k}\rightarrow \R$ is a smooth family of LQ generating families of $L_{a}:=f\left( \left\{ a\right\} \times B \right) $, then the total family $S\co \Delta _{n}\times M\times \R\times \R^{k}\rightarrow 
\R$ is an LQ generating family for the Legendrian $L$ given above.

\item[(b)]  Conversely, if $F\co \Delta _{n}\times B \rightarrow \mathcal{J}^{1}\left(
\Delta _{n}\times M\right) $ is a Legendrian embedding, transversal to the
spaces $W_{a}:=\left\{ a\right\} \times \left( \R^{n}\right) ^{\ast
}\times \mathcal{J}^{1}\left( M\right) $ and having an LQ generating family $S\co \Delta _{n}\times M\times \R\times \R^{k}\rightarrow 
\R$, then $L_{a}:=\left( \rho \circ F\right) \left( \left\{
a\right\} \times M\right) $ is a Legendrian in $\mathcal{J}^{1}\left(
M\right) $ with LQ generating family $S_{a}=S\bigl( \left\{ a\right\} \times
M\times \R\times \R^{k}\bigr) $.
\end{enumerate}
\end{lemma}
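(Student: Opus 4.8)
The plan is to prove both directions by making the identification $\mathcal{J}^1(\Delta_n \times M) \cong T^*(\Delta_n) \times \mathcal{J}^1(M)$ completely explicit in coordinates and then checking the Legendrian and generating-family conditions by direct computation, following the structure of \cite[Lemma~4.3]{Theret} but tracking the linear-quadratic behavior at infinity. Write coordinates on $\mathcal{J}^1(\Delta_n \times M)$ as $(a, x, p_a, p_x, z)$ with $a \in \Delta_n$, $x \in M$, contact form $\alpha = dz - p_a\, da - p_x\, dx$; under $\rho$ these map to $(x, p_x, z) \in \mathcal{J}^1(M)$.

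For part (a), the first step is to produce the map $v$. Writing $f_a(s) = (x(a,s), p_x(a,s), z(a,s))$, the candidate set $L$ is parametrized by $(a,s) \mapsto (a, v(a,s), x(a,s), p_x(a,s), z(a,s))$, and the Legendrian condition $\alpha|_L = 0$ forces, after expanding $dz - v\,da - p_x\,dx$ and using that each $f_a$ is Legendrian (so $dz - p_x\,dx$ vanishes on the $s$-directions), the relation $v(a,s)\,da = (dz - p_x\,dx)|_{\text{$a$-directions}}$; since the right-hand side is a well-defined $1$-form in the $a$-variables along the manifold, this determines $v(a,s) = \partial_a z(a,s) - p_x(a,s)\,\partial_a x(a,s)$ uniquely. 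One then checks $L$ is an embedded submanifold of the right dimension $n + \dim B$ and that $\alpha$ vanishes on it identically (not just in the $s$-directions) — this is the core computation and is routine once $v$ is pinned down. For the generating family claim, given the smooth family $S_a(x,l,w)$ generating $L_a = f(\{a\}\times B)$, one forms $S(a,x,l,w) = S_a(x,l,w)$ and verifies directly from the definition of $i_S$ that its critical locus and image reproduce $L$: the fiber critical equations $\partial_l S = 0$, $\partial_w S = 0$ are exactly those for each $S_a$, while the base derivative $\partial_a S$ supplies precisely the $T^*(\Delta_n)$-coordinate, which must match $v(a,s)$ by the same $1$-form computation as above (this is where one invokes that differentiating "$S_a$ generates $L_a$" in the parameter $a$ yields the correct cotangent coordinate — a standard envelope/generating-family argument). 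The linear-quadratic-at-infinity condition for $S$ is inherited termwise from that of each $S_a$ in the $\R \times \R^k$ variables, since the parameter $a$ ranges over the compact simplex $\Delta_n$ and does not affect the fiber behavior at infinity.

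Part (b) is the converse and is more bookkeeping than substance: given a Legendrian embedding $F$ into $\mathcal{J}^1(\Delta_n \times M)$ transversal to the slices $W_a$, the transversality guarantees that $F^{-1}(W_a)$ is a submanifold and that $(\rho \circ F)$ restricted to it is an immersion into $\mathcal{J}^1(M)$; one checks it is Legendrian by pulling back $\alpha_M = dz - p_x\,dx$ and noting it agrees with $\alpha|_{\text{slice}}$ since $da = 0$ along $W_a$. For the generating family, the slice $S_a = S|_{\{a\}\times M \times \R \times \R^k}$ is again LQ at infinity automatically, and one verifies it generates $L_a$ by restricting the critical-locus and immersion equations for $S$ to the slice $a = \text{const}$; the transversality hypothesis is exactly what is needed to ensure $0$ remains a regular value of the fiber derivative after restriction, so $S_a$ is a bona fide generating family.

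The main obstacle I expect is not any single computation but rather the careful verification in part (a) that the parameter-differentiated generating-family relation produces the correct $T^*(\Delta_n)$-coordinate — i.e., reconciling the two descriptions of $v$, one coming from the Legendrian condition on $L$ and one coming from $\partial_a S$. This requires differentiating the identity "$(x, \partial_x S_a, S_a) \in L_a$ when $\partial_{l,w}S_a = 0$" with respect to $a$ and using the chain rule together with the fiber critical equations to cancel the $\partial_l S_a \cdot \partial_a l$ and $\partial_w S_a \cdot \partial_a w$ terms; this is precisely the classical computation underlying why generating families of families assemble correctly, and it is the one place where sign and index conventions must be handled with care. Everything else — embeddedness, dimension counts, and the at-infinity conditions — follows from the compactness of $\Delta_n$ and the corresponding steps in \cite{Theret}.
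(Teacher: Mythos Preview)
Your plan is correct and follows essentially the same approach as the paper: the paper gives the identical explicit formula $v(a,s) = \partial_a f_a^z - f_a^y \cdot \partial_a f_a^x$ (in its notation) and then simply declares the generating-family assertions in both (a) and (b) ``straightforward to check,'' whereas you have spelled out the verifications (in particular the chain-rule cancellation showing $\partial_a S$ matches $v$) in more detail than the paper does.
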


\begin{proof}
First, $\mathcal{J}^{1}\left( \Delta _{n}\times M\right) \cong T^{\ast
}\left( \Delta _{n}\right) \times \mathcal{J}^{1}\left( M\right) $ by a
reordering of coordinates as follows:  $\left( a,x,w,y,z\right) \sim \left(
a,w\right) \times \left( x,y,z\right) $ for $\left( a,w\right) \in T^{\ast
}\left( \Delta _{n}\right) $ and $\left( x,y,z\right) \in \mathcal{J}^{1}\left( M\right) ,$ since the contact form on $T^{\ast}\left( \Delta
_{n}\right) \times \mathcal{J}^{1}\left( M\right) $ and $\mathcal{J}^{1}\left( \Delta _{n}\times M\right) $ is given by $\alpha =dz-ydx-wda$.

To verify (a),  let 
$f\co \Delta _{n}\times B \rightarrow \mathcal{J}^{1}\left( M\right) $, 
$\left( a,x\right) \mapsto f_{a}\left( x\right) $ be a differentiable map
so that each $f_{a}$ is a Legendrian embedding of $B$ in $\mathcal{J}^{1}\left( M\right) $.    Write $f_{a}\left( s\right) =\left( f_{a}^{x}\left(
s\right) ,f_{a}^{y}\left( s\right) ,f_{a}^{z}\left( s\right) \right) $. 
Then if we let $v\left( a,s\right) =\frac{\partial f_{a}^{z}}{\partial
a}-f_{a}^{y}\left( s\right) \cdot \frac{\partial f_{a}^{x}}{\partial a}$, 
 $$
L =\left\{ \left(   a,f_{a}^{x}\left( s\right) ,v\left( a,s\right) ,f_{a}^{y}\left(
s\right) ,f_{a}^{z}\left( s\right)  \right) \right\} \subset \mathcal{J}^{1}\left( \Delta _{n}\times
M\right) 
 $$
is a Legendrian submanifold.    
It is straightforward to check that if
 $S_{a}\co M\times \R\times \R^{k}\rightarrow 
\R$ is a smooth family of LQ generating functions of $L_{a}:=f_{a}\left( M\right) $, $a\in \Delta _{n}$ then
the total function $S\co \Delta _{n}\times M\times \R\times \R^{k}\rightarrow \R$ is an LQ generating family for $L$ as given
above.   

To verify (b), let $F\co \Delta _{n}\times B \rightarrow \mathcal{J}^{1}\left( \Delta
_{n}\times M\right) $ be a Legendrian embedding transversal to the spaces $W_{a}:=\left\{ a\right\} \times \left( \R^{n}\right) ^{\ast}\times 
\mathcal{J}^{1}\left( M\right) $ and having an LQ generating function $S\co \Delta _{n}\times M\times \R\times \R^{k}\rightarrow 
\R$.  For $a\in \Delta _{n}$, let $L_{a}:=\left( \rho \circ
F\right) \left( \left\{ a\right\} \times M\right) $ and let
$S_{a}:=S\bigl(
\left\{ a\right\} \times M\times \R\times \R^{k}\bigr) $. 
Let $L=F\left( \Delta _{n}\times M\right) $.  Since $L$ is transversal to
each $W_{a}$, $L\cap W_{a}$ is a  
submanifold of $\mathcal{J}^{1}\left( \Delta _{n}\times M\right) $ that projects to $L_{a} \subset \jetM$.
It is straight forward to show that $S_{a}$ generates $L_{a}$.
\end{proof}

We are now ready to state and prove the main result of this subsection.  The
theorem and its proof are nearly identical to Th\'{e}ret's Theorem 4.2 and its proof in \cite{Theret}.    

\begin{theorem}
\label{Theret 4.2}The map $\pi \co \mathcal{F}\rightarrow \mathcal{L}$ is a
smooth Serre fibration, up to equivalence.  More precisely, if the smooth
map $f \co \Delta _{n}\rightarrow \mathcal{L}$ has a smooth lift $F\co \Delta
_{n}\rightarrow \mathcal{F}$ and if $\left( f_{t}\co \Delta _{n}\rightarrow 
\mathcal{L}\right) _{t\in \left[ 0,1\right]}$ is a smooth homotopy with $f=f_{0}$, then there is a smooth homotopy $\left( F_{t}\co \Delta
_{n}\rightarrow \mathcal{F}\right) _{t\in \left[ 0,1\right]}$ such that $F_{0}=F$ up to equivalence, and $\pi \circ F_{t}=f_{t}$ for every $t\in \left[ 0,1\right] $.
\end{theorem}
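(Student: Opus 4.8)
The plan is to follow Th\'eret's two-stage strategy \cite{Theret}, using the two preparatory results already established: the path-lifting property (\fullref{path lifting property}) and the ``Legendrian suspension'' lemma (\fullref{Theret 4.3}). The starting data is a smooth map $f\co\Delta_n\to\mathcal{L}$, a smooth lift $F\co\Delta_n\to\mathcal{F}$ (so $S:=F$ is, after invoking part (a) of \fullref{Theret 4.3}, an LQ generating family for the $(n{+}1)$--dimensional Legendrian $L\subset\mathcal{J}^1(\Delta_n\times\R)$ assembled from the family $f$), and a smooth homotopy $(f_t)_{t\in[0,1]}$ with $f_0=f$. The first step is to reinterpret this homotopy. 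Using the standard trick of regarding a homotopy as a single map on $\Delta_n\times[0,1]$ and then absorbing the interval into the simplex (replacing $\Delta_n$ by $\Delta_{n+1}$, or by collaring $\Delta_n\times[0,1]$ inside a larger simplex), the family $(f_t)$ produces a map $\widehat f$ on a simplex one dimension up, whose restriction over the ``$t=0$'' face is $f$; part (a) of \fullref{Theret 4.3} then packages $\widehat f$ into a single Legendrian $\widehat L\subset\mathcal{J}^1(\Delta_{n+1}\times\R)$, and the given lift $F$ of $f$ gives an LQ generating family over the $t=0$ face only.

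The second step is to produce a contact isotopy of $\mathcal{J}^1(\Delta_{n+1}\times\R)$ (or of the appropriate $\mathcal{J}^1(\R^m)$, as in the proof of \fullref{path lifting property}) that carries the ``constant in $t$'' Legendrian agreeing with $\widehat L$ over the $t=0$ face onto $\widehat L$ itself; this is where one uses that $(f_t)$ is a \emph{smooth} homotopy through Legendrian embeddings, invoking a parametrized Legendrian isotopy extension theorem to realize the homotopy by an ambient compactly supported contact isotopy $(\kappa_\tau)$. Then I would apply \fullref{path lifting property} to this $\kappa_\tau$ and to the Legendrian $L^{\mathrm{const}}$ (the suspension of the constant homotopy at $f$), which by hypothesis has the LQ generating family coming from $F$. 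The theorem yields, after a single stabilization $F\oplus Q$, a path $(F_\tau)$ of LQ generating families, constant outside a compact set, with $F_\tau$ generating $\kappa_\tau(L^{\mathrm{const}})$; at $\tau=1$ this is an LQ generating family $\widehat S$ for $\widehat L$, equal to $F\oplus Q$ over the $t=0$ face.

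The third and final step is to descend: apply part (b) of \fullref{Theret 4.3} slicewise in the $t$--direction to $\widehat S$ and $\widehat L$. For each $t$, the slice $\widehat S|_{t}$ is an LQ generating family for $(\rho\circ\widehat F)(\{t\}\times(\Delta_n\times\R))$, which is exactly $f_t\co\Delta_n\to\mathcal{L}$ reassembled; unpacking the suspension in the reverse direction gives a smooth homotopy $(F_t\co\Delta_n\to\mathcal{F})$ with $\pi\circ F_t=f_t$ and $F_0=F\oplus Q$, i.e.\ $F_0=F$ up to equivalence. One must also check that transversality to the slicing walls $W_a$ holds for $\widehat F$ — this follows because the $t$--coordinate is genuinely a parameter (the homotopy is a product near the relevant faces), so the construction in part (a) of \fullref{Theret 4.3} automatically produces a Legendrian transverse to the coordinate walls.

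The main obstacle I expect is the bookkeeping in the first and third steps: carefully setting up the ``homotopy as a Legendrian one dimension up'' correspondence so that the $t=0$ face receives precisely the given lift $F$ (not merely an equivalent one), and confirming that \fullref{path lifting property} applies with a \emph{single} auxiliary stabilization uniform in $t$ — which it does, since that theorem already outputs a uniform $\R^j$--stabilization. The contact-geometric content is entirely in \fullref{path lifting property} and \fullref{Theret 4.3}; the present argument is a formal assembly, and the one genuinely delicate point is checking that the transversality hypothesis in part (b) of \fullref{Theret 4.3} is met, which reduces to the observation that suspending a homotopy yields a Legendrian that is a product in the homotopy direction near the ends.
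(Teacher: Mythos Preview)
Your proposal is correct in spirit and uses exactly the same two ingredients as the paper --- the path-lifting property (\fullref{path lifting property}) and the suspension lemma (\fullref{Theret 4.3}) --- but you take an unnecessary detour. You suspend \emph{twice}: first over $\Delta_n$ to get a Legendrian $L\subset\mathcal{J}^1(\Delta_n\times\R)$, and then again over the homotopy interval $[0,1]$ (by squeezing $\Delta_n\times[0,1]$ into $\Delta_{n+1}$) to get a single Legendrian $\widehat L\subset\mathcal{J}^1(\Delta_{n+1}\times\R)$; you then have to manufacture a contact isotopy in this bigger space carrying the constant suspension $L^{\mathrm{const}}$ onto $\widehat L$, which you justify by reparametrizing the homotopy and invoking isotopy extension.

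The paper avoids all of this by suspending only once. Applying \fullref{Theret 4.3}(a) to each $f_t$ separately yields a \emph{path} $\bar f\co[0,1]\to\mathcal{L}(\mathcal{J}^1(\Delta_n\times M))$ of Legendrians, whose initial point $\bar f(0)$ already has the LQ generating family coming from $F$. Now \fullref{path lifting property} (after the routine step of realizing $\bar f$ by a compactly supported contact isotopy via Legendrian isotopy extension) lifts this path directly to a path of LQ generating families, and \fullref{Theret 4.3}(b) slices back down. So the homotopy parameter $t$ \emph{is} the path parameter --- there is no need for a second suspension, the $\Delta_{n+1}$ bookkeeping, or the argument that $L^{\mathrm{const}}$ and $\widehat L$ are contact isotopic. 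Your version works, but the paper's is a genuine simplification: it removes your steps 1 and 2 almost entirely and thereby sidesteps the transversality and ``product near the ends'' checks you flagged as delicate.
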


\begin{proof}
 Applying (a) from \fullref{Theret 4.3}, for each $t\in \left[ 0,1\right] $, the family $f_{t}\left( \Delta
_{n}\right) $ parameterized by points in $\Delta _{n}$ can be used to
construct a Legendrian $L_{t}$ in $\mathcal{J}^{1}\left( \Delta _{n}\times
M\right) $.  Thus we get a path $\bar{f}\co \left[ 0,1\right] \rightarrow 
\mathcal{L}\left( \mathcal{J}^{1}\left( \Delta _{n}\times M\right) \right) $
whose initial point $\bar{f}_{0}$ admits a generating family $\bar{F}_{0}$.
By part (b) of \fullref{Theret 4.3}, it is sufficient to prove that
we can lift the path $\bar{f}$ from the initial point $\bar{F}_{0}$.  By
\fullref{path lifting property}, we can lift the path $\bar{f}$, as
desired. 
\end{proof}

\subsection{Uniqueness of LQ generating families} \label{Uniqueness}

We say a Legendrian $L$ has the \emph{uniqueness property} if
any two of its LQ generating families are equivalent.  Here we prove that
if $L$ has the uniqueness property and $L_{1}$ is obtained from $L$ through
a Legendrian isotopy, then $L_{1}$ has the uniqueness property as well.  We
first prove a lemma which allows us to get a path in $\pi ^{-1}\left(
L_{1}\right) $ between any two LQ generating families of $L_{1}$. This
lemma corresponds to \cite[Lemma~5.2]{Theret}.

\begin{lemma} 
\label{Theret 5.2}Suppose that $L_0 \in \mathcal{L}$ is a Legendrian with the
uniqueness property and that $L_{1}$ is obtained from $L$ through a
Legendrian isotopy.  Let $f$ and $f^{\prime}$ be two LQ generating
families for $L_{1}$.  Then up to equivalence, $f$ and $f^{\prime}$ can be
connected by a path in $\pi ^{-1}\left( L_{1}\right) $.
\end{lemma}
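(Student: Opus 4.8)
The plan is to use the Serre fibration property established in \fullref{Theret 4.2} together with the uniqueness hypothesis on $L_0$. First I would fix a Legendrian isotopy $(L_t)_{t \in [0,1]}$ from $L_0$ to $L_1$, which by standard contact isotopy extension I view as arising from a compactly supported contact isotopy $\varphi_t$ of $\jetR$ with $\varphi_0 = \operatorname{id}$ and $\varphi_1(L_0) = L_1$. Given the two LQ generating families $f$ and $f'$ for $L_1$, I would reverse the isotopy: applying \fullref{path lifting property} (or \fullref{Theret 4.2}) to the reversed isotopy $\varphi_{1-t}\varphi_1^{-1}$, each of $f$ and $f'$ can be transported along this path to LQ generating families $g$ and $g'$ of $L_0$, where $g_0 = f$ and $g'_0 = f'$ up to stabilization. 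Concretely, we obtain paths $(g_t)$ and $(g'_t)$ in $\mathcal{F}$ with $\pi(g_t) = \pi(g'_t) = L_{1-t}$ (the reversed isotopy), so that $g_1$ and $g'_1$ both lie in $\pi^{-1}(L_0)$.

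Next, since $L_0$ has the uniqueness property, $g_1$ and $g'_1$ are equivalent LQ generating families for $L_0$, i.e.\ equal after some stabilizations and a fiber-preserving diffeomorphism. The key point is that the operations of stabilization and fiber-preserving diffeomorphism can be carried along any lifted path without changing which Legendrian is generated: stabilizing every family in a path $(g_t)$ by the same quadratic $Q$, or composing with the same $x$-independent fiber-preserving diffeomorphism at every $t$, produces a new path in $\mathcal{F}$ still lying over the same path in $\mathcal{L}$. So after replacing $(g_t)$ and $(g'_t)$ by suitable stabilized/diffeomorphic versions, I can arrange that $g_1 = g'_1$ as an actual function in $\pi^{-1}(L_0)$. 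Reading the two paths back — going from $f$ (up to equivalence) to $g_1$ along $(g_t)$, and from $g_1 = g'_1$ back to $f'$ (up to equivalence) along $(g'_t)$ reversed — concatenates to a single path in $\pi^{-1}(L_1)$ connecting (an equivalent version of) $f$ to (an equivalent version of) $f'$, since $\pi(g_t) = \pi(g'_t)$ is constant equal to $L_1$ only at the endpoint...

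I should be more careful here: the lifted paths live over the reversed \emph{isotopy} of Legendrians, not over the constant path at $L_1$. So the composite path I build lies over the loop $L_1 \to L_0 \to L_1$ in $\mathcal{L}$, not inside $\pi^{-1}(L_1)$. To fix this, after reaching $g_1 = g'_1$ over $L_0$, I transport this single family \emph{forward} along the original isotopy $\varphi_t$ using \fullref{path lifting property} once more, obtaining a path $(h_t)$ with $h_0 = g_1 = g'_1$ (up to stabilization) and $h_1 \in \pi^{-1}(L_1)$. Then $h_1$ is connected to both $f$ and $f'$ (up to equivalence) by paths that stay over $L_1$? No — they stay over the isotopy. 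The honest statement, matching \cite[Lemma~5.2]{Theret}, is the following: concatenating the forward-transport of $f'$ with the backward-transport of $f$ gives a path over a \emph{contractible} loop in $\mathcal{L}$ based at $L_1$, and using the homotopy-lifting form of \fullref{Theret 4.2} applied to a null-homotopy of this loop (with $\Delta_n$ a point, and the homotopy parameter contracting the loop), I can push the whole path to lie entirely in the fiber $\pi^{-1}(L_1)$, at the cost of replacing $f$ and $f'$ by equivalent families.

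The main obstacle will be exactly this last bookkeeping step: ensuring that the homotopy-lifting property of the Serre fibration is applied to the correct null-homotopy so that the resulting path genuinely lies in $\pi^{-1}(L_1)$, and that all the ``up to equivalence'' ambiguities (stabilizations introduced at each application of \fullref{path lifting property}, and the fiber-preserving diffeomorphism supplied by the uniqueness property) can be absorbed coherently along the path rather than just at isolated times. Since stabilization by a fixed $Q$ and composition with a fixed fiber-preserving diffeomorphism both commute with $\pi$ and act pointwise along paths, these ambiguities do not obstruct the argument — they only change $f$ and $f'$ to equivalent families, which is all the lemma claims. The verification that these path-level modifications are legitimate (i.e.\ that the stabilized/conjugated path still consists of LQ generating families and still lies over the prescribed path in $\mathcal{L}$) is routine and follows directly from \fullref{defn:gf_equivalence} and the definition of $\pi$.
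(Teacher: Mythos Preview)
Your approach is correct and, after your self-corrections, matches the paper's exactly: build a path of generating families over the contractible loop $L_1 \to L_0 \to L_1$ (using path lifting and the uniqueness property at $L_0$ to match up in the middle), then use the Serre fibration property to push this path into the fiber $\pi^{-1}(L_1)$. One correction: in that last step you must apply \fullref{Theret 4.2} with $n=1$ (so $\Delta_1$ is the interval parametrizing your loop and the homotopy $f_t$ is the null-homotopy of loops), not with $\Delta_n$ a point --- with a point you only recover ordinary path lifting, which cannot move a path into a single fiber; the paper notes the use of $n=1$ explicitly.
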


Since the proof is  identical to that in \cite{Theret}, only the following sketch 
is given.  Let $f_0$ and $f_0^\prime$ be two LQ generating
families for $L_1$.  From the path of Legendrians between $L_0$ and
$L_1$, we construct a contractible loop of Legendrians based at $L_1$.
Using \fullref{path lifting property} (the path
lifting property) and the fact that $L_0$ has the uniqueness property, we know
this loop is covered by a path of generating families with endpoints
at (equivalent versions of) $f_0$ and $f_0^\prime$.  Since the loop
is contractible, by \fullref{Theret 4.2} (with $n=1$)
we get our desired result.

We now get the following uniqueness theorem, which corresponds to
\cite[Theorem~5.1]{Theret}.  

\begin{theorem}
\label{Theret 5.1}Let $L_{0}$ be a Legendrian with the uniqueness property.
 Suppose that $L_{1}=\varphi _{1}\left( L_{0}\right) $ where $\left(
\varphi _{t}\right) _{t\in \left[ 0,1\right]}$ is a Legendrian isotopy of $\mathcal{J}^{1}\left( M\right) $.  Then any two LQ generating families of $L_{1}$ are equivalent.
\end{theorem}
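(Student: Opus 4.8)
The plan is to deduce Theorem~\ref{Theret 5.1} directly from Lemma~\ref{Theret 5.2} together with the Serre fibration property of Theorem~\ref{Theret 4.2}. Let $f$ and $f'$ be any two LQ generating families for $L_1 = \varphi_1(L_0)$. By Lemma~\ref{Theret 5.2}, after replacing $f$ and $f'$ by equivalent generating families (which is harmless, since equivalence is what we are trying to establish), there is a path $(g_t)_{t\in[0,1]}$ in $\pi^{-1}(L_1) \subset \mathcal{F}$ with $g_0 = f$ and $g_1 = f'$. So it suffices to show that any two LQ generating families of a \emph{fixed} Legendrian $L_1$ that are connected by a path lying entirely in the fiber $\pi^{-1}(L_1)$ are equivalent.

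First I would set this up as a homotopy-lifting problem over the $0$--simplex. Consider the constant map $f_* \co \Delta_0 \to \mathcal{L}$ with value $L_1$; it has the smooth lift $F_* \co \Delta_0 \to \mathcal{F}$ given by $g_0 = f$. Now take the constant homotopy $(f_t)_{t\in[0,1]}$, $f_t \equiv L_1$, of $f_*$. By Theorem~\ref{Theret 4.2} (the Serre fibration property, applied with $n=0$), there is a smooth homotopy $(F_t \co \Delta_0 \to \mathcal{F})_{t\in[0,1]}$ with $F_0 = f$ up to equivalence and $\pi \circ F_t = f_t \equiv L_1$ for all $t$. But this is automatic and not yet what I want; the real content is to use the path $(g_t)$ as the given homotopy data and read off that its two endpoints must be equivalent. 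The cleaner route is: the path $(g_t)$ is itself a lift of the constant path at $L_1$, and another lift is the constant path at $f$; by the uniqueness-up-to-equivalence built into the Serre fibration statement — more precisely, by lifting the homotopy of the constant path at $L_1$ starting from the already-given lift $g_t$ — one concludes that $g_1 = f'$ is equivalent to $g_0 = f$. Equivalently, one can contract the path $(g_t)$ rel endpoints inside $\pi^{-1}(L_1)$ only after passing to equivalence classes; the fibration property of Theorem~\ref{Theret 4.2} with $n=0$ guarantees exactly that the fiber, modulo stabilization and fiber-preserving diffeomorphism, is path-connected implies it is a point.

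I expect the main obstacle to be purely expository rather than mathematical: making precise how the statement of Theorem~\ref{Theret 4.2}, which is phrased as a homotopy-lifting property for maps out of simplices, yields the assertion that the \emph{fiber} $\pi^{-1}(L_1)$ is connected up to equivalence. The standard argument is that a Serre fibration with path-connected total fiber over a point is trivial, but here ``fiber'' must be interpreted in the quotient category where stabilization and fiber-preserving diffeomorphisms are invertible morphisms; one must check that the equivalence-class bookkeeping in the conclusion ``$F_0 = F$ up to equivalence'' propagates correctly along the path. Since Th\'eret carries out precisely this deduction in \cite[Theorem~5.1]{Theret}, and our Theorem~\ref{Theret 4.2} and Lemma~\ref{Theret 5.2} are the LQ analogues of his ingredients, the proof is a formal transcription: combine Lemma~\ref{Theret 5.2} to produce the in-fiber path, then apply Theorem~\ref{Theret 4.2} with $n=0$ to the constant homotopy at $L_1$ to conclude that the endpoints of that path are equivalent, hence $f$ and $f'$ are equivalent.
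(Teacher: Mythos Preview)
Your reduction via Lemma~\ref{Theret 5.2} is correct and matches the paper: after that lemma it suffices to show that if $(f_t)_{t\in[0,1]}$ is a smooth path in $\pi^{-1}(L_1)$, then $f_0$ and $f_1$ are equivalent. But your second step has a genuine gap, not merely an expository one. Theorem~\ref{Theret 4.2} is a homotopy \emph{lifting} statement: given a homotopy downstairs and a lift at time $0$, \emph{some} lift exists. It contains no uniqueness-of-lifts clause, and the phrase ``$F_0=F$ up to equivalence'' in its conclusion only means that the produced lift may differ from the given one by a stabilization at $t=0$; it does not assert that any two lifts of the same homotopy have equivalent endpoints. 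In particular, applying it with $n=0$ to the constant homotopy at $L_1$ tells you nothing about where the specific path $(g_t)$ ends. Your sentence ``path-connected implies it is a point'' is false for fibers of Serre fibrations in general (think of the Hopf fibration), and nothing in Theorem~\ref{Theret 4.2} upgrades it here.

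The paper closes this gap by a direct Moser-type argument, which is the actual content of the theorem: one seeks a fiber-preserving isotopy $\Phi_t$ with $f_t\circ\Phi_t=f_0$; differentiating in $t$ gives an equation for the generating vertical vector field $X_t$. Away from the (common) fiber-critical set $\Sigma$ one solves for $X_t$ easily since the fiber derivative of $f_t$ is surjective there; near $\Sigma$ one uses Hadamard's Lemma (the constancy of the Legendrian forces $\dot f_t$ to vanish to the right order on $\Sigma$) to obtain a smooth local solution; a bump function patches the two. Integrating $X_t$ yields the desired $\Phi_t$, hence $f_1=f_0\circ\Phi_1^{-1}$ and the endpoints are equivalent. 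This analytic step is what your proposal is missing.
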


Again, since the proof is nearly identical to the proof of
\cite[Theorem~5.1]{Theret},
we will only sketch the argument.  If $f$ and $f^\prime$ are two LQ generating
families for $L_1$, then we know by \fullref{Theret 5.2}, up to equivalence, 
$f$ and $f^{\prime}$ can be
connected by a path in $\pi ^{-1}\left( L_{1}\right) $.  Therefore it
suffices to show that if $\left( f_{t}\right) _{t\in \left[ 0,1\right]}$ is
a smooth path of LQ generating families that generate a fixed Legendrian,
then there exists a fiber-preserving isotopy $\Phi_t$ such that
$f_t \circ \Phi_t = f_0$ for all $t \in [0,1]$.  By differentiating this equation
with respect to $t$, we get an equation for the vector field $X_t$ that
generates this isotopy.  It is easy to find a solution for this $X_t$ outside
the fiber critical set $\Sigma_t = \Sigma$ of $f_t$.  We then apply Hadamard's Lemma
to find a solution for $X_t$ near $\Sigma$.  These two solutions are
then pasted together by the choice of an appropriate bump function.

\subsection{Uniqueness of the LQ generating family for a basic unknot} \label{Unique_Unknot}

So far we have proved that existence and uniqueness of LQ generating
families persist under Legendrian isotopies.  We will now prove that the
maximal unknot in $\jetR$ shown in \fullref{fig:trivial} 
has the uniqueness property.  The proof of this theorem is philosophically
the same as \cite[Theorem~6.1]{Theret} where it is
proved that the Lagrangian $0$--section of $T^{\ast}M$, for $M$ an arbitrary
closed manifold, has a unique quadratic at infinity generating family.  Some
differences between the proofs are pointed out in \fullref{remark} after
the proof.

\begin{theorem}
\label{uniqueness}Let $L\subset \jetR = \R^3 $ be a
maximal Legendrian unknot.  Then any two LQ generating families of $L$ are
equivalent.
\end{theorem}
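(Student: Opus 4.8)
The plan is to exhibit one explicit LQ generating family for a convenient model of the maximal unknot, and then show that any other LQ generating family for that model is equivalent to it; by the isotopy-invariance result (Theorem~\ref{Theret 5.1}), uniqueness then holds for every maximal Legendrian unknot. So first I would fix the standard picture in \fullref{fig:trivial} and write down a concrete generating family $F_0 \co \R \times \R \times \R^0 \to \R$, $F_0(x,l) = J(l) + (\text{compactly supported correction in }x)$, verifying directly that its fiber-critical set $\Sigma_{F_0}$ is a circle and that $i_{F_0}$ embeds it onto $L$; this also re-proves the Existence Theorem in this base case. The real content is the comparison step.

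For the comparison, I would follow the scheme of \cite[Theorem~6.1]{Theret} adapted to the linear-quadratic setting. Given an arbitrary LQ generating family $F$ for $L$, first normalize: after a fiber-preserving diffeomorphism and stabilization, arrange that $F$ is \emph{special} LQ, so $F(x,l,v) = J(l) + Q(v)$ outside a compact set, with $J$ and $Q$ fixed and independent of $x$ (this uses the remark after \fullref{defn:gf_equivalence} that any LQ family is equivalent to a special one, via the argument of \cite[Proposition~2.12]{Theret}). After stabilizing both $F$ and $F_0$ so their fiber dimensions and the quadratic forms $Q$ agree, the goal is to build a fiber-preserving diffeomorphism $\Phi$ of $\R\times(\R\times\R^N)$, equal to the identity outside a compact set, with $F\circ\Phi = F_0$. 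Because $\Sigma_F$ and $\Sigma_{F_0}$ are both circles mapping by Legendrian embeddings onto the \emph{same} Legendrian $L$, the two critical loci are identified; the functions $F$ and $F_0$ agree on their critical loci (both equal the $z$-coordinate of $L$), and I would use a relative version of the Morse/Hadamard normal-form argument — splitting the fiber directions at each critical point into the null direction (where the behavior is governed by the common Legendrian) and the nondegenerate directions (where $F$ looks like $Q$ after a change of coordinates) — to interpolate $F$ to $F_0$ through a path $F_s$ of LQ generating families all generating $L$. The key technical point is that the interpolation can be done \emph{relative to the ends}, i.e.\ keeping the linear-at-infinity part $J(l)$ untouched, so that every $F_s$ remains genuinely LQ rather than degenerating; here one exploits that the $l$-direction is one-dimensional and that $J\neq 0$ is preserved along the homotopy. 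Finally, having produced such a path $F_s$, I invoke the mechanism from the proof of \fullref{Theret 5.1} (differentiate $F_s\circ\Phi_s = F_0$ in $s$, solve for the generating vector field $X_s$ away from $\Sigma$, patch near $\Sigma$ via Hadamard's Lemma and a bump function) to integrate the path into a single fiber-preserving diffeomorphism, concluding that $F$ and $F_0$ are equivalent.

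The main obstacle I anticipate is precisely the control of the \emph{linear} direction during the normal-form reduction. In Th\'eret's quadratic-at-infinity setting the zero section is compact and the fiber-critical locus maps to a point, so the Morse-lemma-with-parameters argument is comparatively clean; here the base is noncompact, the Legendrian $L$ is a circle with cusps, and one must simultaneously (i) keep the single linear coordinate $l$ from interacting with the quadratic block, (ii) ensure the homotopy $F_s$ never develops a degenerate quadratic part or a vanishing linear part outside the compact set, and (iii) handle the cusp points of the front of $L$, where the critical locus $\Sigma_F$ projects with vertical tangency. I would isolate a neighborhood of the cusps and treat them by a direct local model, using that the standard maximal unknot has exactly two cusps and a very simple front. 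The differences from Th\'eret's proof that arise from the cusps and from the linear-at-infinity normalization are exactly the points I would flag in \fullref{remark}.
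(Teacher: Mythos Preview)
Your framing is correct — reduce via \fullref{Theret 5.1} to the standard model of \fullref{fig:trivial}, then normalize both families to be special LQ on a common domain with the same fiber-critical set. But the heart of your argument, ``interpolate $F$ to $F_0$ through a path $F_s$ of LQ generating families all generating $L$,'' is a genuine gap. Morse/Hadamard normal forms give local models near the fiber-critical locus; they do not manufacture a global path of generating families for the \emph{fixed} Legendrian. Away from $\Sigma$ you have two functions without fiber-critical points agreeing outside a compact set, and there is no evident homotopy between them that stays inside the class of LQ families generating $L$ (a convex interpolation certainly need not). Producing that path is essentially the theorem, so following it with the integration step from \fullref{Theret 5.1} is circular. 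You are also conflating two roles of Hadamard's lemma: in \fullref{Theret 5.1} it is used to build the vector field \emph{given} a path, not to produce the path.

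The paper's proof — and Th\'eret's, on which it is modeled — does not go through a path at all: it constructs the fiber diffeomorphism $\varphi_x$ \emph{directly} from gradient flows. After arranging $f_x = g_x$ on a neighborhood $U(x)$ of the critical set (using the $A_2$ normal form $l^3 - xl + Q(v)$ at the cusps, and — the key Morse-theoretic input — the fact that for each $x\in(0,1)$ there is a \emph{unique} gradient trajectory between the two critical points, so $U(x)$ can be taken to be a tube around it), one defines $\varphi_x$ by flowing along the $g_x$-gradient to a reference point and then back along the $f_x$-gradient to the correct level. Reference points lie in $U(x)$ or on a ``$-\infty$ level set'' $f_x^{-1}(c)$ for $c\ll 0$; because the families are special LQ these level sets are graphs of $\R^n$, and the partially defined identification between $g_x^{-1}(c)$ and $f_x^{-1}(c)$ (coming from orbits that hit $U(x)$) extends by isotopy extension. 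This $-\infty$ level-set device is precisely what replaces Th\'eret's $-\epsilon$ level set and bypasses the hardest part of his argument (cf.\ \fullref{remark}). The two missing ideas in your proposal are the single-trajectory argument and the gradient-flow/level-set construction of $\varphi_x$.
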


\begin{proof}
By \fullref{Theret 5.1}, it suffices to prove the theorem in the case
where $L$ is the unknot pictured in \fullref{fig:trivial}.  Let $f,$ $g$
be two LQ generating families for $L$.  By applying fiber-preserving
diffeomorphisms and stabilizations, we can assume the following:

\begin{itemize}
\item $f$ and $g$ have the same domain 
$\R \times \R\times \R^{n} = \{ (x, l, v) \}$; we will let $f_{x},g_{x} \co \R\times \R^{n}\rightarrow 
\R$ denote the associated fiber functions.

\item The critical points of $f_{x}$ agree with the critical points of $g_{x}$; we will let $C_{x}\subset \R\times \R^{n}$ denote
this set of critical points.  Assuming that the two cusp points of $\pi_{xz}(L)$
occur at $(0,0)$ and $(1,0)$, we see that $C_{0}$ and $C_{1}$ consist of a
single point with critical value $0$, $C_{x}$ for $x\in \left( 0,1\right) $
consists of two nondegenerate critical points with nonzero critical values,
and $C_{x}=\emptyset $ for all other $x$.  By a Morse theoretic argument
(see, for example, the proof of \fullref{proposition 3.12}), for $x\in \left( 0,1\right) $, we can write $C_{x}=\left\{ v_{0}\left( x\right)
,v_{1}\left( x\right) \right\} $ where $v_{1}\left( x\right) $ has index $k+1 $, $v_{0}\left( x\right) $ has index $k$, and
\begin{equation*}
f_{x}\left( v_{1}\left( x\right) \right) =g_{x}\left(
v_{1}\left( x\right) \right) >0> f_{x}\left(
v_{0}\left( x\right) \right) =g_{x}\left( v_{0}\left( x\right) \right) \text{.}
\end{equation*}

\item $f$ and $g$ are special; that is, outside a compact set of $\R\times \R^{n}$ they are strictly linear-quadratic, with linear and
quadratic parts independent of the base point $x\in \R$.
\end{itemize}

We next show that for all $x$, we can assume $f_{x}=g_{x}$ on a neighborhood 
$U\left( x\right) $ of $C_{x}$ in $\R\times \R^{n}$.

\begin{lemma} \label{nbhd}
There exists a neighborhood $U$ of $\cup _{x\in \R}C_{x}$ in $\R \times 
\R\times \R^{n}$ so that for $U\left( x\right) =U\cap \left(
\left\{ x\right\} \times \R\times \R^{n}\right) $, after 
fiber-preserving diffeomorphisms $f_{x}=g_{x}$ on $U\left( x\right) $, and all gradient trajectories of $f_{x}$
and  of $g_{x}$ intersect $U\left( x\right) $ in a connected set.
\end{lemma}

\begin{proof} From Arnol'd, Guse{\u\i}n-Zade and Varchenko \cite{Arnold}, we know that after applying
fiber-preserving
diffeomorphisms, in a neighborhood of $C_{0}$ in $\R \times \R\times \R^{n}$ we can assume $f\left( x,l,v\right) =g\left(
x,l,v\right) =l^{3}-xl+Q\left( v\right) $, where $Q$ is a nondegenerate
quadratic function.  A similar statement holds in a neighborhood of $C_{1}$.  Thus there is the desired neighborhood of $C_{0}\cup C_{1}$.

To see that there is a such a neighborhood for $\cup _{x\in \R}C_{x}$, we
first note that with a generic choice of metrics, $f_{x}$ (and $g_{x}$), $x\in \left( 0,1\right) $, forms a family of functions whose gradient flows
satisfy the Morse-Smale conditions.  We will now argue that for all $x\in
\left( 0,1\right) $, there is a single isolated gradient trajectory of $f_{x} $ from $v_{1}\left( x\right) $ to $v_{0}\left( x\right) $.  Since the
gradient trajectories of $f_{x}$ satisfy the Morse-Smale conditions, for all 
$x\in \left( 0,1\right) $, $W^{u}\left( v_{1}\left( x\right) \right) $ (the
unstable manifold of $v_{1}\left( x\right) $), $W^{s}\left( v_{0}\left(
x\right) \right) $ (the stable manifold of $v_{0}\left( x\right) $), and $f_{x}^{-1}\left( 0\right) $ intersect transversally in a finite number of
points.  Since near $x=0$, this intersection consists of a single point,
for all $x$ this intersection consists of a single point.  It follows that
for all $x\in \left( 0,1\right) $ there is a single gradient trajectory of $f_{x}$ in $W^{u}\left( v_{1}\left( x\right) \right) \cap W^{s}\left(
v_{0}\left( x\right) \right) $.  The analogous argument shows that there is
a single gradient trajectory of $g_{x}$ from $v_{1}\left( x\right) $ to $v_{0}\left( x\right) $ for all $x\in \left( 0,1\right) $.

It is not hard to show that by applying a diffeomorphism of $\R^{1+n}$, we can assume $f_{x}=g_{x}$ on
neighborhoods $V_{0}\left( x\right) $, $V_{1}\left( x\right) $ of $v_{0}\left( x\right) $ and $v_{1}\left( x\right) $, respectively.  This can be proved
using the Morse Lemma to obtain a diffeomorphism equating $f_x$ and $g_x$ 
on neighborhoods of $v_0$ and $v_1$ 
and then extending this diffeomorphism to all of $\R^{1+n}$ via the isotopy extension theorem
using the fact that any embedding of two disjoint balls must be isotopic to the identity.

By applying this neighborhood diffeomorphism together with the fact
that there is a single gradient trajectory from $v_{1}\left( x\right) $ to $v_{0}\left( x\right) $, after applying diffeomorphisms, we can assume that
\begin{itemize}
\item $v_{0}\left( x\right) $, $v_{1}\left( x\right) \in \R\times
\left\{ 0\right\} $ and $I\left( x\right) :=W^{u}\left( v_{1}\left( x\right)
\right) \cap W^{s}\left( v_{0}\left( x\right) \right) \subset $ $\R\times \left\{ 0\right\}$;

\item For neighborhoods $V_{1}\left( x\right) $, $V_{0}\left( x\right) $ of $v_{1}\left( x\right) $, $v_{0}\left( x\right) $, $f_{x}=g_{x}$ on $V_{1}\left( x\right) \cup I\left( x\right) \cup V_{0}\left( x\right)$.
\end{itemize}

Now, there exist tubular neighborhoods $T_{f}\left( x\right) $ and $T_{g}\left( x\right)$ of the open interval $I\left( x\right) $ consisting
of gradient trajectories of $f_{x},g_{x}$ that intersect $V_{1}\left(
x\right) $ and $V_{0}\left( x\right) $.  Viewing each tubular neighborhood
as a family of parameterized disks, we see that after applying a
diffeomorphism, we can assume $T_{f}\left( x\right) =T_{g}\left( x\right)
=T\left( x\right) $ and $f_{x}=g_{x}$ on $U\left( x\right) =V_{0}\left(
x\right) \cup T\left( x\right) \cup V_{1}\left( x\right) $.

The desired $U$ can be constructed from the above described neighborhoods of 
$C_{0}$, $C_{1}$, and $C_{x}$, $x\in \R$.  This completes the proof of
the lemma.\hfill \hfill 
\end{proof}

We will be using the gradient flows of $f_{x}$ and $g_{x}$ to define the
diffeomorphism $\varphi _{x}$ of $\R\times \R^{n}$ such that 
$f_{x}\circ \varphi _{x}=g_{x}$.  In particular, we will be working with
the orbits of the gradient flows.

To become familiar with the construction, suppose we have a situation where
every orbit of the gradient flow of both $f_{x}$ and $g_{x}$ intersect $U\left( x\right) $.  In this case, our diffeomorphism $\varphi _{x}$ is
defined by leaving all points in $U\left( x\right) $ fixed, while mapping
points outside of $U\left( x\right) $ in the following way.  For $w\notin
U\left( x\right) $, flow along the orbit of $w$ by the positive (negative)
gradient flow of $g_{x}$ until you reach a reference point $w^{\prime}\in
U\left( x\right) $.  Then flow from $w^{\prime}$ by the negative
(positive) gradient flow of $f_{x}$ until you reach a point $w^{\prime
\prime}$ such that $f_{x}\left( w^{\prime \prime}\right) =g_{x}\left(
w\right) $:  since $f_{x}=g_{x}$ on $U\left( x\right) $, it is not
difficult to verify that the $f_{x}$--orbit containing $w^{\prime}$ takes on
the same values as the $g_{x}$--orbit containing $w^{\prime}$ and thus $w^{\prime \prime}$ must exist.  Define $\varphi _{x}\left( w\right)
=w^{\prime \prime}$, so then $f_{x}\circ \varphi _{x}=g_{x}$.

To see that this map is well-defined, suppose we choose a different
reference point $\widetilde{w^{\prime}}$ in $U\left( x\right) $.  Then $w^{\prime}$ and $\widetilde{w^{\prime}}$ are in the same orbit with
respect to the gradient flow of $g_{x}$.  Moreover since $w^{\prime},
\widetilde{w^{\prime}}\in U\left( x\right) $ and $f_{x}=g_{x}$ on $U\left(
x\right) $, we also know that $w^{\prime},\widetilde{w^{\prime}}$ are in
the same orbit with respect to the gradient flow of $f_{x}$.  Thus they
both result in the same $w^{\prime \prime}$, so $\varphi _{x}$ is
well-defined.

In practice, we will usually have to consider the case where not every orbit
intersects $U\left( x\right) $.  In this case, we will see that every orbit
will intersect either $U\left( x\right) $ or a ``negative infinity level
set,'' \label{negative infinity}where a negative infinity level set for $f$
(respectively for $g$) is defined to be $f_{x}^{-1}\left( c\right) $
(respectively $g_{x}^{-1}\left( c\right) $) for some fixed $c\ll 0$. 
 \label{definitions}Let $L_{-\infty}^{f}:=f_{x}^{-1}\left( c\right) $ and let $L_{-\infty}^{g}:=g_{x}^{-1}\left( c\right) $. 
Since $f$ and $g$ are assumed to be special, $L_{-\infty}^{f}$ and $L_{-\infty}^{g}$ do
not depend on $x$.
In fact, since $f$ is
linear-quadratic at infinity, 
\begin{equation*}
\begin{array}{lll}
L_{-\infty}^{f} & = & \bigl\{ \left( l,v_{1}, \dots ,v_{n}\right) \in 
\R^{1 + n} \mid l-v_{1}^{2}-\cdots
-v_{j}^{2}+v_{j+1}^{2}+\cdots +v_{n}^{2}=c\bigr\} \\ 
& = & \bigl\{ \bigl( c+v_{1}^{2}+\cdots +v_{j}^{2}-v_{j+1}^{2}-\cdots
-v_{n}^{2},v_{1},v_{2},\dots ,v_{n}\bigr) \in \R^{1+n} \bigr\} .
\end{array}
\end{equation*}
Therefore $L_{-\infty}^{f}$ is an embedded image of $\R^{n}$ in $\R\times \R^{n}$, as is $L_{-\infty}^{g}$. To see that each orbit of the gradient flows of $f_{x}$
($g_{x}$) intersects $U\left( x\right) $ or $L_{-\infty}^{f}$ ($L_{-\infty
}^{g}$), observe that an orbit will either terminate at a critical point or
enter the region of $\R\times \R^{n}$ where $f_{x}$ ($g_{x}$) is standard linear-quadratic function.  In the first case the orbit
intersects $U\left( x\right) $, and in the second case the orbit intersects $L_{-\infty}^{f}$ ($L_{-\infty}^{g}$).

The idea now is to define a diffeomorphism similar to the one above, but
using reference points in $U\left( x\right) \cup L_{-\infty}^{f}$ and $U\left( x\right) \cup L_{-\infty}^{g}$.  However we must be careful to be
sure that the map is well-defined with respect to the reference point
chosen.  The difficulty here is when we have orbits of the gradient flows
of $f_{x}$ ($g_{x}$) that intersect both $U\left( x\right) $ and $L_{-\infty
}^{f}$ ($L_{-\infty}^{g}$), and thus we have reference points in both $U\left( x\right) $ and $L_{-\infty}^{f}$ ($L_{-\infty}^{g}$).  We will
now show that the map is, in fact, well defined.

Shrink $U(x)$ slightly to a closed set $D(x) \subset U(x)$ that satisfies
the conditions on $U(x)$ specified by \fullref{nbhd}.  
Consider the set of orbits that intersect both $D(x) \subset U\left( x\right) $ and $L_{-\infty}^{f}$ (or $L_{-\infty}^{g}$).  Let $W^{f}\left( x\right) $ ($W^{g}\left( x\right) $) denote a transverse slice of these orbits so that
each orbit with respect to the gradient flow of $f_{x}$ ($g_{x}$) intersects 
$W^f(x)$ ($W^g(x)$) precisely once.  In fact, $W^{f}\left( x\right) $ ($W^{g}\left(
x\right) $) can be chosen to be closed $n$--dimensional disks.
 Since $f_{x}=g_{x}$ on $D(x) \subset U\left( x\right) $, it is possible to choose $W\left( x\right) :=W^{f}\left( x\right) =W^{g}\left( x\right) \subset
D(x) \subset U\left( x\right) $.  For $w\in W\left( x\right) $, let $w_{g},w_{f}$ \label{definitions2}be the orbits of $w$ with respect to the gradient flows of $g_{x},f_{x}$ respectively.  Note $w_{g}\cap L_{-\infty}^{g}$ and $w_{f}\cap L_{-\infty}^{f}$ are each single points, call them $w_{g}^{-\infty}$ and $w_{f}^{-\infty}$ respectively.  
Let $W_1(x) =\cup _{w\in W(x)}w_{g}^{-\infty}$, $W_{2}(x)=\cup
_{w\in W(x)}w_{f}^{-\infty}$, and consider $\theta_x \co W_1(x) \to W_2(x)$
defined by $\theta_x \left(
w_{g}^{-\infty}\right) =w_{f}^{-\infty}$ for each $w\in W\left( x\right) $. 
Then by an application of the isotopy extension theorem, $\theta_x$
extends to a diffeomorphism $\Theta_x \co L_{-\infty}^{g}\rightarrow L_{-\infty}^{f}$.

Now we proceed to define $\varphi _{x}$.  As before, $\varphi _{x}$ leaves
all points in $D(x) \subset U\left( x\right) $ fixed.  For $w \notin D\left( x\right) $,
flow along the orbit of $w$ by the positive (negative) gradient flow of $g_{x}$ until you reach a reference point $w^{\prime}\in D\left( x\right)
\cup L_{-\infty}^{g}$.  If $w^{\prime}\in D\left( x\right) $, let $w^{\prime \prime}=w^{\prime}$.  If $w^{\prime}\in L_{-\infty}^{g}$, let 
$w^{\prime \prime}=\Theta _{x}\left( w^{\prime}\right) $. Now flow from $w^{\prime \prime}$ by the negative (positive) gradient flow of $f_{x}$
until you reach a point $w^{\prime \prime \prime}$ such that $f_{x}\left(
w^{\prime \prime \prime}\right) =g_{x}\left( w\right) $.  Define $\varphi
_{x}\left( w\right) =w^{\prime \prime \prime}$, so that $f_{x}\circ \varphi
_{x}=g_{x}$.  It is straight forward to verify that $\varphi_x$ is well-defined.

Now for each $x\in \R$ we have a diffeomorphism $\varphi _{x}$ of $\R\times \R^{n}$ such that $f_{x}\circ \varphi _{x}=g_{x}$.
 By construction, $\varphi _{x}$ varies smoothly with $x$.  Thus we have a
diffeomorphism $\Phi $ of $\R \times \R\times \R^{n}$
such that $f\circ \Phi =g$.  Hence $f$ and $g$ are equivalent.\hfill
\end{proof}

\begin{remark}
\label{remark}The above proof and Th\'{e}ret's proof of uniqueness of
generating families for the Lagrangian $0$--section have some differences. 
In Th\'{e}ret's setting, the construction of the set $U\left( x\right) $
containing the fiber-critical point where the function is standard is an
easy consequence of Morse Theory.  Also, in Th\'{e}ret's work, he uses a
$-\epsilon$--level set where above we use a $-\infty$--level set.  When
using the $-\epsilon$--level set, it is more immediate that the fiber
diffeomorphisms constructed via gradient flows are well defined.  However,
Th\'{e}ret must spend a great deal of effort to prove that for two
generating families $f$ and $g$, there is a global diffeomorphism between $f^{-1}\left( -\epsilon \right) $ and $g^{-1}\left( -\epsilon \right) $ that
is a diffeomorphism on each fiber slice.  This difficulty is avoided in the
above proof with the use of $-\infty$--level sets.\hfill \hfill
\end{remark}

\section{LQ generating family polynomials} \label{Applications}

We now use the existence and uniqueness results of the previous
section to define invariant polynomials for two component Legendrian
links in $\R^3$ where each component is a maximal unknot.
Much of this material parallels the results from Traynor \cite{Generating};
however, in that paper links live in $\jetS$ and each component of the link was Legendrian isotopic to 
the $1$--jet of a function. 

\begin{definition}
Given two functions $f_{i}\co \R \times \R^{1 + n_i} \rightarrow \R$, $i=0,1$, let 
$\Delta \co \R \times  \R^{1+ n_1} \times \R^{1+n_0} \rightarrow \R$ be given by $\Delta \left(
x,l_{1},v_{1},l_{0},v_{0}\right) =f_{1}\left( x,l_{1},v_{1}\right)
-f_{0}\left( x,l_{0},v_{0}\right) $.  Then $\Delta $ is the \emph{difference function} of $f_{1}$ and $f_{0}$.  Furthermore, if $f_{1}$ and $f_{0}$ generate $\Lambda _{1}$ and $\Lambda _{0}$, respectively, then $\Delta $ is a \emph{difference function of} $L=\left( \Lambda _{1},\Lambda
_{0}\right) $.
\end{definition}

Note the following facts about the critical points of a difference function $\Delta $.  First, if $\Delta $ is a difference function for a link $L=\left( \Lambda _{1},\Lambda _{0}\right) $, then the critical points of $\Delta $ are in one-to-one
correspondence with points of the form $\left( \left( x_{0},y_{0},z_{1}\right) ,\left(
x_{0},y_{0},z_{0}\right) \right) \in \Lambda _{1}\times \Lambda _{0}$. 
This can be seen by calculating the derivative of $\Delta $ in terms of the
derivatives of $f_{1}$ and $f_{0}$, where $f_{i}$ generates $\Lambda _{i}$ ($i=0,1$).  Therefore a critical point of $\Delta $ can be identified in the
front projection of $L$ as a pair of points, one point on each $\Lambda _{i}$, where the points have the same $x$ coordinate and the same slope. Second, 
if $L=\left( \Lambda _{1},\Lambda _{0}\right)$ is a link, the components of
$L$ do not intersect, and thus  $0$
is not a critical value of $\Delta $.

We will now proceed to define homology groups for $\Delta $, where $\Delta
\co \R \times \R^{1+n_1} \times   
\R^{1 + n_{0}}\rightarrow \R$ is a difference function for $L$.
 For $c\in \R$, $c$ a noncritical value of $\Delta $, let 
\begin{equation*}
\Delta ^{c}:=\left\{ \left( x,l_{1}, v_1, l_{0}, v_{0}\right) \mid \Delta
\left( x,l_{1}, v_1, l_{0}, v_{0}\right) \leq c\right\} \text{.}
\end{equation*}
Note that for every link $L$ and difference function $\Delta $, there exists
some constant $m>0$ such that all the critical values of $\Delta $ are in
the interval $\left[ -m+\epsilon ,m-\epsilon \right] $ for some $\epsilon >0$.  Then we define
\begin{equation*}
\Delta ^{\infty}:=\Delta ^{m}\text{,} \qquad \Delta ^{-\infty}:=\Delta ^{-m}\text{.}
\end{equation*}
Now the homology groups for $\Delta $ are defined as follows.

\begin{definition} 
Let $\Delta $ be a difference function for a Legendrian link $L=\left(
\Lambda _{1},\Lambda _{0}\right)$.  The \emph{total, positive, and negative homology groups of} $L$ \emph{
with respect to} $\Delta $ are defined as
\begin{eqnarray*}
H_{k}\bigl( \Delta \bigr) &=&H_{k}\bigl( \Delta ^{\infty},\Delta ^{-\infty
}\bigr) \text{,} \\
H_{k}^{+}\bigl( \Delta \bigr) &=&H_{k}\bigl( \Delta ^{\infty},\Delta
^{0}\bigr) \text{,} \\
H_{k}^{-}\bigl( \Delta \bigr) &=&H_{k}\bigl( \Delta ^{0},\Delta ^{-\infty
}\bigr) \text{,}
\end{eqnarray*}
for $k\in \Z$, where the relative homology groups are calculated
with coefficients in $\Z/2\Z$.
\end{definition}

If $\Delta $ and $\Delta ^{\prime}$ are two difference functions for a link 
$L$, then their homology groups will be related as follows.  If $\Delta 
$ and $\Delta ^{\prime}$ differ only by a fiber-preserving diffeomorphism,
then they will have the same homology groups.  However if $\Delta $ and $\Delta ^{\prime}$ differ by a stabilization, then there exists  
$i\in \Z$ such that 
$$
H_{k+i}\left( \Delta \right) =H_{k}\left( \Delta ^{\prime}\right) , \quad
H_{k+i}^{+}\left( \Delta \right) =H_{k}^{+}\left( \Delta ^{\prime}\right)
, \quad
H_{k+i}^{-}\left( \Delta \right) =H_{k}^{-}\left( \Delta ^{\prime}\right).
$$
In fact, the total homology groups do not carry any information about
a particular link.  

\begin{proposition}
\label{proposition 3.12}Let $L=\left( \Lambda _{1},\Lambda _{0}\right)
 $ be a Legendrian link  of maximal unknots.  Given a difference
function $\Delta $ for $L$, $H_{k}\left( \Delta \right) \simeq 0$ for all $k\in \Z$.
\end{proposition}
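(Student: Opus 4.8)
The plan is to show that the total homology $H_k(\Delta) = H_k(\Delta^\infty, \Delta^{-\infty})$ vanishes by identifying both sublevel sets with standard models whose relative homology is trivial. The key observation is that the total homology should depend only on the behavior of $\Delta$ ``at infinity,'' and since each $f_i$ is LQ (hence, after the equivalences already established in \fullref{Existence\&Uniqueness}, special LQ), the difference function $\Delta(x,l_1,v_1,l_0,v_0) = f_1(x,l_1,v_1) - f_0(x,l_0,v_0)$ is itself linear-quadratic at infinity in a suitable sense: outside a compact set in the fiber variables it looks like $J_1(l_1) - J_0(l_0) + Q_1(v_1) - Q_0(v_0)$, a nondegenerate form in $(v_1,v_0)$ plus a nonzero linear form in $(l_1,l_0)$ whose coefficient of at least one $l_i$ is nonzero. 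First I would make this reduction precise, invoking the fact (stated in the excerpt after \fullref{defn:gf_equivalence}) that the three homology theories are invariant under fiber-preserving diffeomorphism and shift only by a grading under stabilization, so it suffices to compute $H_k(\Delta)$ for any convenient representatives $f_0, f_1$ of the LQ generating families of the two maximal unknots.

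Next I would compute directly. Since $L$ is a link of maximal unknots, by the uniqueness theorem (\fullref{uniqueness}) each $f_i$ is equivalent to a fixed model generating family for the standard unknot of \fullref{fig:trivial}; pick such models. The sublevel set $\Delta^{-\infty} = \Delta^{-m}$ for $m$ large is, by the LQ-at-infinity description, a deformation retract of the ``negative infinity level set'' region, which is diffeomorphic to the sublevel set of the model quadratic-linear form $J - Q$ below a very negative value; this is contractible (it retracts onto the negative eigenspace directions of the quadratic part together with the appropriate linear half-space). Similarly $\Delta^{\infty} = \Delta^{m}$ for $m$ large retracts onto the sublevel set of the same model form below a very positive value, and by the standard Morse-theoretic fact about functions that are nondegenerate quadratic plus nonzero linear at infinity, the pair $(\Delta^\infty, \Delta^{-\infty})$ is homotopy equivalent to the pair of sublevel sets of the purely model function $(\{J-Q \le m\}, \{J-Q \le -m\})$, because all critical points of $\Delta$ lie in the compact slab $[-m+\epsilon, m-\epsilon]$ and crossing no critical value occurs between $-m$ and $-\infty$ or between $m$ and $+\infty$. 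The crucial point is that the model function $J_1(l_1) - J_0(l_0) + Q_1(v_1) - Q_0(v_0)$, having a nonzero linear part, has \emph{no critical points at all}, so its sublevel set at level $m$ deformation retracts (by pushing down the linear gradient direction) onto its sublevel set at level $-m$; hence $H_k(\Delta^\infty, \Delta^{-\infty}) = 0$ for all $k$.

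The main obstacle will be making the ``retract to the model at infinity'' step rigorous: one must produce an explicit deformation retraction of $\R \times \R^{1+n_1} \times \R^{1+n_0}$ (or of the large sublevel set) that is compatible with $\Delta$ outside a compact set and does not create spurious critical behavior — essentially a controlled gradient-flow or interpolation argument showing $\Delta$ is, up to homotopy of pairs, equal to its model part. This is where I would lean on the special LQ normalization (so that the linear and quadratic parts are genuinely independent of $x$ and of each other), cut off with a bump function, and use a Moser-type or straight-line homotopy between $\Delta$ and $J_1 - J_0 + Q_1 - Q_0$ that is stationary outside a compact set; since no critical values are crossed in the relevant range, excision and the long exact sequence of the triple then give the vanishing. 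A cleaner alternative, which I would mention, is to compare with the $x$-independent case: along the line $\{x = x_*\}$ for $x_*$ outside $[0,1]$ the fiber functions $f_{i,x_*}$ have \emph{no} critical points (since $C_x = \emptyset$ there), so the whole difference function $\Delta$ deformation retracts in the $x$-direction onto a critical-point-free slice, immediately killing the relative homology; the care needed is only to check this retraction respects the sublevel filtration, which again follows from the LQ-at-infinity control.
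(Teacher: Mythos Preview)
Your core idea---straight-line homotope $\Delta$ to its at-infinity model $\tilde\Delta = J_1(l_1) - J_0(l_0) + Q_1(v_1) - Q_0(v_0)$, observe this has no critical points, and conclude $H_k(\Delta^m,\Delta^{-m})=0$---is correct and is genuinely different from the paper's argument. The paper instead isotopes each component $\Lambda_i$ \emph{individually} (not as a link) to a standard unknot so that at $t=1$ the two components have disjoint $x$--projections; by path lifting this produces a family $\Delta_t$ of honest difference functions with $\Delta_1$ critical-point-free, and then \fullref{lemma 3.10} applies verbatim. Your route is more direct but your homotopy $\Delta_t$ is not a difference of LQ generating families, so \fullref{lemma 3.10} as stated does not cover it; you would need the (easy) extension to smooth families that merely agree with a fixed LQ model outside a compact set. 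What the paper's approach buys is that no such extension is needed; what yours buys is that you never have to move the Legendrians.

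One genuine misstep: your appeal to the uniqueness theorem is wrong. Uniqueness says any two LQ generating families \emph{for the same} $\Lambda_i$ are equivalent; it does \emph{not} let you replace $f_i$ by the model family for the standard unknot of \fullref{fig:trivial}, since that model generates a different Legendrian. Fortunately your main argument does not actually use this---the at-infinity description $\Delta = \tilde\Delta$ outside a compact set holds for any special LQ representatives---so just delete that sentence. However, your ``cleaner alternative'' (retract in the $x$--direction to a slice where the fiber functions have no critical points) \emph{does} rely on this misstep: for the given $\Lambda_i$ there is no reason the fiber-critical set is empty for $x$ outside $[0,1]$. Making that alternative work would require first isotoping each component to the standard position, which is exactly the paper's proof.
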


To prove this proposition, we will use the following lemma.  The lemma is similar to
\cite[Lemma~3.10]{Generating}, and can be proved following
the argument presented there.

\begin{lemma}
\label{lemma 3.10}  Consider a smooth 1--parameter family of difference
functions $\Delta _{t}\co \R \times \R^{1+n_1} \times \R^{1+n_{0}}\rightarrow \R$, $t\in \left[ 0,1\right] $, where each $\Delta _{t}$ is the difference of two LQ
generating families.  Given paths $\alpha ,\beta \co \left[ 0,1\right]
\rightarrow \R$ such that, for all $t$, $\alpha \left( t\right)
,\beta \left( t\right) $ are noncritical values of $\Delta _{t}$ with $\alpha \left( t\right) <\beta \left( t\right) $.  Then for any $t\in \left[
0,1\right] $ and $k\in \Z$, $H_{k}\bigl( \Delta _{0}^{\beta \left(
0\right)},\Delta _{0}^{\alpha \left( 0\right)}\bigr) \simeq H_{k}\bigl(
\Delta _{t}^{\beta \left( t\right)},\Delta _{t}^{\alpha \left( t\right)
}\bigr) $.
\end{lemma}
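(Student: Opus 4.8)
The plan is to prove this by a continuation argument from Morse theory, adapted to the noncompact domain by exploiting the linear-quadratic behavior of the $\Delta_t$ at infinity. The heart of the matter is to produce, for each $t$ near a fixed $t_0$, a diffeomorphism $\psi_t$ of the domain $\R \times \R^{1+n_1} \times \R^{1+n_0}$ that simultaneously carries $\Delta_{t_0}^{\beta(t_0)}$ onto $\Delta_t^{\beta(t)}$ and $\Delta_{t_0}^{\alpha(t_0)}$ onto $\Delta_t^{\alpha(t)}$. Such a $\psi_t$ is then a diffeomorphism of pairs and induces the claimed isomorphism on relative homology. This local statement shows that $t \mapsto H_k(\Delta_t^{\beta(t)}, \Delta_t^{\alpha(t)})$ is locally constant up to isomorphism; since $[0,1]$ is connected, it is therefore constant, which is exactly the assertion of the lemma. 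Thus it suffices to establish the local diffeomorphism of pairs.

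First I would establish uniform control at infinity. Because each $\Delta_t$ is the difference of two LQ generating families, outside a compact set it agrees with a fixed model $J_1(l_1) - J_0(l_0) + Q_1(v_1) - Q_0(v_0)$, linear in the $l$--variables and nondegenerate quadratic in the $v$--variables, and independent of $x$. The $l$--components of its gradient are then constant and nonzero, so this model has nowhere-vanishing gradient; in particular, all critical points of $\Delta_t$ lie in a compact set. Using smoothness of the family together with compactness of a small interval about $t_0$, I would arrange that all critical points of $\Delta_t$ lie in one fixed compact set $K$ and that the standard linear-quadratic part is uniform in $t$. This confines the critical values to a fixed bounded interval and guarantees that the level sets $\Delta_t^{-1}(\alpha(t))$ and $\Delta_t^{-1}(\beta(t))$ look like the critical-point-free model outside $K$.

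Next I would construct $\psi_t$ as the flow of a time-dependent vector field $X_t$. Requiring the flow to carry the moving level sets $\Delta_t^{-1}(\alpha(t))$ and $\Delta_t^{-1}(\beta(t))$ along with $t$ amounts, upon differentiation in $t$, to the condition $d\Delta_t(X_t) + \partial_t \Delta_t = \dot c(t)$ along the level $\Delta_t = c(t)$, for $c \in \{\alpha, \beta\}$. Since $\alpha(t)$ and $\beta(t)$ are noncritical, $d\Delta_t \neq 0$ on these level sets, so this equation can be solved for $X_t$ there, for instance by taking $X_t$ proportional to the gradient $\nabla \Delta_t$ with respect to a fixed metric. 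I would then interpolate and use a bump function supported near $\{\alpha(t) \le \Delta_t \le \beta(t)\}$ and vanishing outside a slightly larger compact set to extend $X_t$ to a compactly supported vector field on the whole domain. Outside $K$ the two level sets already match the no-critical-point model, so the cutoff does not disturb the sublevel structure there.

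The main obstacle is precisely the noncompactness of the domain: one must guarantee that $X_t$ integrates to a complete flow and that the resulting $\psi_t$ genuinely carries the moving sublevel sets onto one another, without critical values escaping to infinity or the flow blowing up. This is exactly what the linear-quadratic at infinity hypothesis provides — a $t$--independent model of $\Delta_t$ outside a compact set, so that $X_t$ may be taken compactly supported (hence with complete flow) while the level sets remain uniformly controlled outside $K$. Once completeness is in hand, $\psi_t$ is a diffeomorphism of pairs and the induced map on $H_k$ is an isomorphism, completing the local step and, by connectedness of $[0,1]$, the lemma. This is the same strategy carried out in \cite[Lemma~3.10]{Generating}, to which the details may be compared.
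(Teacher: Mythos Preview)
Your overall strategy—a Moser-type continuation argument producing a time-dependent vector field whose flow carries the moving pair of sublevel sets along—is exactly the standard one, and is what the paper has in mind when it refers to \cite[Lemma~3.10]{Generating}. So the approach is correct.

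There is, however, a genuine imprecision in your handling of the behavior at infinity. You assert that $\Delta_t$ agrees with a fixed linear-quadratic model outside a compact set, and from this conclude that $X_t$ may be taken compactly supported. This is not true for a \emph{difference} of LQ generating families. Each $F_i^t$ is standard outside a compact set $K_i \subset \R_x \times \R^{1+n_i}$, but when you form $\Delta_t(x,l_1,v_1,l_0,v_0) = F_1^t(x,l_1,v_1) - F_0^t(x,l_0,v_0)$ on the product domain, the set where $\Delta_t$ can differ from the model is $\{(x,l_1,v_1) \in K_1\} \cup \{(x,l_0,v_0) \in K_0\}$, which is unbounded (e.g.\ $(l_0,v_0)$ is unconstrained on the first piece). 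In particular $\partial_t\Delta_t$ is not compactly supported, and the sublevel slab $\{\alpha(t)\le\Delta_t\le\beta(t)\}$ is certainly not near any compact set, so your bump-function sentence does not make sense as written. The repair is straightforward: $\partial_t\Delta_t$ is \emph{bounded} (each $\partial_t F_i^t$ is compactly supported in its own domain, hence bounded), and $\nabla\Delta_t$ is nonvanishing outside the compact critical locus and grows at most linearly at infinity. Taking $X_t$ proportional to $\nabla\Delta_t / |\nabla\Delta_t|^2$ (cut off only near the critical points, not at infinity) then yields a globally defined, bounded vector field, and bounded vector fields have complete flows. With that correction your argument goes through.
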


\noindent Using this, we now prove \fullref{proposition 3.12}:
 \begin{proof} 
Recall that if $\Delta $ and $\Delta ^{\prime}$ are two difference
functions for $L$, then  there exists some $i\in \Z$ such
that $H_{k+i}\left( \Delta \right) \simeq H_{k}\left( \Delta ^{\prime
}\right) $.  Therefore if the theorem is true for one particular difference
function of $L$, then it is true for every difference function.

By hypothesis, each strand of $L$ can be individually isotoped so that it is
the maximal unknot shown in \fullref{fig:trivial}. Choose 
isotopies of the strands $\Lambda_1^t, \Lambda_0^t$, $t \in [0,1]$, so that
$\Lambda_1^0 = \Lambda_1$, $\Lambda_0^0 = \Lambda_0$, and when
$t = 1$, $\Lambda_1^1$ and $\Lambda_0^1$ are maximal unknots translated
so that  $\left( x_{1},y_{1},z_{1}\right) \in \Lambda _{1}^{1}$ and 
$\left( x_{0},y_{0},z_{0}\right) \in \Lambda _{0}^{1}$  implies that $x_{1}\neq x_{0}$; in other words, $\Lambda _{1}^{1}$ and $\Lambda _{0}^{1}$
have no base points in $\R$ in common.  Consider $L_t = (\Lambda_1^t, \Lambda_0^t)$, 
$t \in [0, 1]$.  If $L$ is  
a nontrivial link, this is not a link isotopy,
but at each $t$, there will be  a
difference function, $\Delta_t$, for $L_t$
(which may have $0$ as a critical value).
 Since critical points of $\Delta _{t}$ correspond to points $\left( \left(
x_{0},y_{0},z_{1}\right) ,\left( x_{0},y_{0},z_{0}\right) \right) \in
\Lambda _{1}^{t}\times \Lambda _{0}^{t}$, $\Delta _{1}$ has no critical
points at all.  Therefore $H_{k}\left( \Delta _{1}\right) \simeq 0$ for all 
$k\in \Z$.
It remains to show that $H_{k}\left( \Delta _{0}\right) \simeq H_{k}\left(
\Delta _{1}\right) $.  For the family of difference functions $\Delta _{t}$, choose paths $\alpha ,\beta \co \left[ 0,1\right] \rightarrow \R$
such that $\alpha \left( t\right) $ is negative and less than all critical
values of $\Delta _{t}$ and $\beta \left( t\right) $ is positive and greater
than all critical values of $\Delta _{t}$.  By \fullref{lemma 3.10}, $H_{k}\left( \Delta _{0}\right) \simeq H_{k}\left( \Delta _{1}\right) $ for
all $k\in \Z$.  Therefore $H_{k}\left( \Delta _{0}\right) \simeq 0$
for all $k\in \Z$.\hfill
\end{proof}

As we will see, the positive and negative homology groups do carry
information about a particular link.
We will use polynomials $\Gamma _{\Delta}^{\pm}$ to encode information
about the set of homology groups of a difference function $\Delta $.  Define 
\begin{eqnarray*}
\Gamma _{\Delta}^{+}\left( \lambda \right) &=&\sum_{k=0}^{\infty}
\text{dim}H_{k}^{+}\left( \Delta \right) \cdot \lambda ^{k}, \\
\Gamma _{\Delta}^{-}\left( \lambda \right) &=&\sum_{k=0}^{\infty}
\text{dim}H_{k}^{-}\left( \Delta \right) \cdot \lambda ^{k}\text{.}
\end{eqnarray*}
As noted above,  if $\Delta $ and $\Delta ^{\prime}$ are two
difference functions for a link $L$, then there exists some $i\in \Z$
such that $\Gamma _{\Delta}^{\pm}\left( \lambda \right) =\Gamma _{\Delta
^{\prime}}^{\pm}\left( \lambda \right) \cdot \lambda ^{i}$.  Thus we 
define positive and negative homology polynomials for $L$ as \emph{normalized}
versions of the positive and negative homology polynomials for $\Delta $,
where $\Delta $ is some difference function for $L$.

\begin{definition}
Let $L=\left( \Lambda _{1},\Lambda _{0}\right) $ be a Legendrian link of maximal unknots  with difference function $\Delta $.  Define the \emph{positive and negative homology polynomials of} $L$ by 
\begin{eqnarray*}
\Gamma ^{+}\left( \lambda \right) \left[ L\right] &=&\Gamma _{\Delta
}^{+}\left( \lambda \right) \cdot \lambda ^{i}\text{,} \\
\Gamma ^{-}\left( \lambda \right) \left[ L\right] &=&\Gamma _{\Delta
}^{-}\left( \lambda \right) \cdot \lambda ^{i}\text{,}
\end{eqnarray*}
where $i\in \Z$ is chosen so that $\Gamma ^{-}\left( \lambda \right)
\left[ L\right] $ has degree zero.
\end{definition}

\begin{remark} \label{vector}
 An alternative to looking at a normalized version of the
polynomials is to consider a vector encoding the dimensions of the homology
groups.  For example, if $\text{dim} H_k^-(\Delta) = 0$ if $k < A$ or $k>B$
and $H_A^-(\Delta)$ and $H_B^-(\Delta)$ are nontrivial, then we construct
the \emph{negative homology vector}
$$\left( \text{dim} H_A^-(\Delta), \text{dim} H_{A+1}^-(\Delta), \dots,
\text{dim} H_B^-(\Delta)  \right).$$
Similarly, one can construct the positive homology vector.
\end{remark}

In fact, the  homology polynomials are  invariants of $L$.

\begin{theorem}
$\Gamma ^{\pm}\left( \lambda \right) \left[ L\right] $ are well-defined
invariants of a Legendrian link $L=\left( \Lambda _{1},\Lambda _{0}\right)$ of 
 maximal unknots.
\end{theorem}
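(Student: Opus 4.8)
The plan is to establish two things: that $\Gamma^{\pm}(\lambda)[L]$ does not depend on the difference function used to compute it, and that it is unchanged under Legendrian equivalence of links. The whole argument runs on the Existence and Uniqueness Theorems of \fullref{Existence&Uniqueness}, the path lifting property \fullref{path lifting property}, and \fullref{lemma 3.10}.

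For well-definedness, first note that a difference function for $L$ exists at all: by the Existence Theorem each maximal unknot $\Lambda_i$ admits an LQ generating family $f_i$, so $\Delta=f_1-f_0$ is a difference function for $L$. Now let $\Delta=f_1-f_0$ and $\Delta'=f_1'-f_0'$ be two difference functions for $L$. By the Uniqueness Theorem (\fullref{uniqueness}), $f_i$ and $f_i'$ are equivalent, i.e.\ related by a succession of fiber-preserving diffeomorphisms and stabilizations. A fiber-preserving diffeomorphism of $f_1$ or $f_0$ induces one of $\Delta$ (acting on the corresponding block of fiber variables and fixing $x$), a stabilization $f_1\oplus Q$ induces the stabilization $\Delta\oplus Q$, and a stabilization $f_0\oplus Q$ induces $\Delta\oplus(-Q)$, still by a nondegenerate quadratic. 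Hence $\Delta$ and $\Delta'$ are themselves related by fiber-preserving diffeomorphisms and stabilizations, so, by the behavior of the groups $H^{\pm}_k$ recorded just before \fullref{proposition 3.12}, there is a single $i\in\Z$ with $\Gamma^{\pm}_{\Delta}(\lambda)=\lambda^{i}\,\Gamma^{\pm}_{\Delta'}(\lambda)$; after normalizing so that $\Gamma^{-}$ has degree zero, the two computations of $\Gamma^{\pm}(\lambda)[L]$ coincide.

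For invariance, let $\widetilde L=(\widetilde\Lambda_1,\widetilde\Lambda_0)$ be Legendrian equivalent to $L$; since the links are compact we may realize the equivalence by a compactly supported contact isotopy $(\kappa_t)_{t\in[0,1]}$ of $\R^3$ with $\kappa_0=\operatorname{id}$ and $\kappa_1(\Lambda_i)=\widetilde\Lambda_i$. Starting from LQ generating families $f_i$ for $\Lambda_i$ and applying \fullref{path lifting property} to each component, we obtain, after an initial stabilization, smooth paths $(F_i^t)_{t\in[0,1]}$ of LQ generating families on fixed domains $\R\times\R\times\R^{N_i}$ with $F_i^t$ generating $\kappa_t(\Lambda_i)$. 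Then $\Delta_t:=F_1^t-F_0^t$ is a smooth $1$--parameter family of difference functions, each the difference of two LQ generating families, with $\Delta_0$ a difference function for $L$ and $\Delta_1$ one for $\widetilde L$. Because the components of $\kappa_t(L)$ are disjoint for every $t$, the value $0$ is noncritical for every $\Delta_t$; choosing $\alpha,\beta\co[0,1]\to\R$ with $\alpha(t)\equiv 0$ and $\beta(t)$ above all critical values of $\Delta_t$, \fullref{lemma 3.10} gives $H^{+}_k(\Delta_0)\simeq H^{+}_k(\Delta_1)$ for all $k$, and with $\alpha(t)$ below all critical values and $\beta(t)\equiv 0$ it gives $H^{-}_k(\Delta_0)\simeq H^{-}_k(\Delta_1)$. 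Thus $\Gamma^{\pm}_{\Delta_0}=\Gamma^{\pm}_{\Delta_1}$, and after normalization $\Gamma^{\pm}(\lambda)[L]=\Gamma^{\pm}(\lambda)[\widetilde L]$.

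The step requiring the most care is the invariance argument: one must check that the path lifting produces difference functions on a single finite-dimensional domain that are genuinely differences of LQ generating families (so that \fullref{lemma 3.10} applies), and that the paths $\alpha,\beta$ can be chosen as stated — this last point is exactly the persistence of $0$ as a noncritical value of $\Delta_t$, which holds because the two components of $\kappa_t(L)$ never meet. The well-definedness part is then a mechanical consequence of the Existence and Uniqueness Theorems together with the already-noted effect of fiber-preserving diffeomorphisms and stabilizations on the homology groups.
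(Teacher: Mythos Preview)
Your proposal is correct and follows essentially the same approach as the paper: well-definedness via the Uniqueness Theorem and the recorded effect of fiber-preserving diffeomorphisms and stabilizations on $H_k^{\pm}$, and invariance via the path lifting property applied to each component followed by \fullref{lemma 3.10} with $0$ as one of the noncritical levels. Your treatment of well-definedness is in fact more explicit than the paper's, which simply refers back to the earlier observation that any two difference functions yield polynomials differing by a power of $\lambda$; you correctly supply the missing step that equivalence of the $f_i$ (from \fullref{uniqueness}) induces equivalence of $\Delta$.
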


\begin{proof}
We have already seen that the homology polynomials for $L$ do not depend on
the choice of difference function.  It remains to show that the polynomials
do not change as $L$ undergoes a Legendrian isotopy.

Suppose $L_{t}$, $t\in \left[ 0,1\right] ,$ is a 1--parameter family of
Legendrian links made of two maximal unknots.  By \fullref{path lifting
property}, there exists a difference function $\Delta _{t}$ for each $L_{t}$ such that $\Delta
_{0}=\Delta _{t}$ outside a compact set.  We will show that for each $t\in
\left[ 0,1\right] $, we have
\begin{eqnarray*}
H_{k}^{+}\left( \Delta _{t}\right) \simeq H_{k}^{+}\left( \Delta
_{0}\right) , \quad
H_{k}^{-}\left( \Delta _{t}\right) \simeq H_{k}^{-}\left( \Delta
_{0}\right),
\end{eqnarray*}
for all $k\in \Z$.  Choose paths $\alpha ,\beta ,\gamma \co \left[ 0,1
\right] \rightarrow \R$ such that $\alpha \left( t\right) $ is negative and
is less than all critical values of $\Delta _{t}$, $\beta \left( t\right) \equiv 0$, and $\gamma \left( t\right) $ is positive and is greater than all critical
values of $\Delta _{t}$.  Then by construction (and since $0$ is never a
critical value of $\Delta _{t}$), $\alpha \left( t\right) $, $\beta \left(
t\right) $, and $\gamma \left( t\right) $ are noncritical values of $\Delta
_{t}$ for all $t$.  Thus by \fullref{lemma 3.10}, the above result
holds.\hfill
\end{proof}

In the remainder of this section, we will prove a few facts about the
homology groups and homology polynomials for a Legendrian link made of two
maximal unknots.

The following lemma will be used to relate the positive and negative homology polynomials
for $L$ and to calculate polynomials for particular links in \fullref{poly_calcs}.
    This Lemma agrees with \cite[Lemma~3.13]{Generating}
and the proof can be found there.

\begin{lemma}
\label{lemma 3.13}For a function $\Delta \co \R \times \R^{1 + n_1} \times \R^{1+n_0} \rightarrow \R$,
and $a,b,c$ noncritical values of $\Delta $ with $a<b<c$, there is a long
exact sequence
$$\cdots \overset{\partial _{\ast}}{\longrightarrow}H_{k}\bigl( \Delta
^{b},\Delta ^{a}\bigr) \overset{i_{\ast}}{\longrightarrow}H_{k}\bigl(
\Delta ^{c},\Delta ^{a}\bigr) \overset{\pi _{\ast}}{\longrightarrow}
H_{k}\bigl( \Delta ^{c},\Delta ^{b}\bigr) \overset{\partial _{\ast}}{\longrightarrow}
H_{k-1}\bigl( \Delta ^{b},\Delta ^{a}\bigr)
\overset{i_{\ast}}{\longrightarrow}\cdots.$$
\end{lemma}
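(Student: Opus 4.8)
The plan is to recognize the asserted sequence as the long exact sequence of the \emph{triple} $(\Delta^{c},\Delta^{b},\Delta^{a})$ in singular homology with $\Z/2\Z$--coefficients. First I would observe that since $a<b<c$ the sublevel sets are nested, $\Delta^{a}\subseteq\Delta^{b}\subseteq\Delta^{c}$, so these three spaces do form a triple. The three maps in the statement are then the standard ones attached to a triple: $i_{\ast}$ is induced by the inclusion of pairs $(\Delta^{b},\Delta^{a})\hookrightarrow(\Delta^{c},\Delta^{a})$; $\pi_{\ast}$ is induced by the inclusion $(\Delta^{c},\Delta^{a})\hookrightarrow(\Delta^{c},\Delta^{b})$ that enlarges the subspace; and $\partial_{\ast}$ is the composition of the connecting homomorphism $H_{k}(\Delta^{c},\Delta^{b})\to H_{k-1}(\Delta^{b})$ of the pair $(\Delta^{c},\Delta^{b})$ with the map $H_{k-1}(\Delta^{b})\to H_{k-1}(\Delta^{b},\Delta^{a})$ induced by $(\Delta^{b},\emptyset)\hookrightarrow(\Delta^{b},\Delta^{a})$.

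With these identifications in hand, I would appeal to the purely algebraic fact that for any triple of spaces $B\subseteq A\subseteq X$ the short exact sequence of relative singular chain complexes $0\to C_{\ast}(A,B)\to C_{\ast}(X,B)\to C_{\ast}(X,A)\to 0$ yields, via the zig-zag lemma, a long exact sequence $\cdots\to H_{k}(A,B)\to H_{k}(X,B)\to H_{k}(X,A)\to H_{k-1}(A,B)\to\cdots$. Specializing to $X=\Delta^{c}$, $A=\Delta^{b}$, $B=\Delta^{a}$ produces exactly the displayed sequence, and a routine check shows that the connecting map coming from the zig-zag lemma coincides with the composite $\partial_{\ast}$ described above. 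This is precisely the argument carried out in \cite[Lemma~3.13]{Generating}, and it transfers here verbatim.

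I do not expect any genuine obstacle. Singular homology is defined for arbitrary topological spaces, so the noncompactness of the domain $\R\times\R^{1+n_{1}}\times\R^{1+n_{0}}$ plays no role; the hypothesis that $a,b,c$ be noncritical values of $\Delta$ is not logically needed for this formal statement, but is retained because it guarantees that $\Delta^{a},\Delta^{b},\Delta^{c}$ are the well-behaved sublevel sets used in \fullref{lemma 3.10} and in the polynomial computations of \fullref{poly_calcs}. The only point requiring a moment's care is matching the directions of the maps $i_{\ast},\pi_{\ast},\partial_{\ast}$ in the display with the inclusion-induced maps of the triple sequence, which is immediate once the chain of inclusions $\Delta^{a}\subseteq\Delta^{b}\subseteq\Delta^{c}$ is written down.
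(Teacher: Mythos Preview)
Your proposal is correct and matches the paper's approach: the paper does not supply a proof but simply cites \cite[Lemma~3.13]{Generating}, and the argument there is exactly the long exact sequence of the triple $(\Delta^{c},\Delta^{b},\Delta^{a})$ that you describe. Your write-up in fact gives more detail than either source, and your observation that the noncritical-value hypothesis is not logically required for this formal statement is accurate.
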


\begin{theorem} \label{positive/negative}
Let $L=\left( \Lambda _{1},\Lambda _{0}\right) $ be a Legendrian link where $\Lambda _{1}$ and $\Lambda _{0}$
are maximal unknots.  Then $\Gamma ^{+}\left( \lambda \right) \left[ L\right]
=\lambda \cdot \Gamma ^{-}\left( \lambda \right) \left[ L\right] $.
\end{theorem}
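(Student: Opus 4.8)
The plan is to exploit the long exact sequence of the triple $(\Delta^\infty, \Delta^0, \Delta^{-\infty})$ for a difference function $\Delta$ of $L$, together with the vanishing of the total homology $H_*(\Delta) = H_*(\Delta^\infty, \Delta^{-\infty})$ established in \fullref{proposition 3.12}. Taking $a = -m$, $b = 0$, $c = m$ (so that $\Delta^a = \Delta^{-\infty}$, $\Delta^b = \Delta^0$, $\Delta^c = \Delta^\infty$) in \fullref{lemma 3.13}, we get the long exact sequence
\begin{equation*}
\cdots \to H_k(\Delta^0, \Delta^{-\infty}) \to H_k(\Delta^\infty, \Delta^{-\infty}) \to H_k(\Delta^\infty, \Delta^0) \to H_{k-1}(\Delta^0, \Delta^{-\infty}) \to \cdots,
\end{equation*}
that is, $\cdots \to H_k^-(\Delta) \to H_k(\Delta) \to H_k^+(\Delta) \to H_{k-1}^-(\Delta) \to \cdots$. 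Since $H_k(\Delta) \simeq 0$ for all $k$ by \fullref{proposition 3.12}, the connecting maps give isomorphisms $H_k^+(\Delta) \simeq H_{k-1}^-(\Delta)$ for every $k \in \Z$.

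From this isomorphism it follows immediately that $\dim H_k^+(\Delta) = \dim H_{k-1}^-(\Delta)$ for all $k$, and hence, summing the defining series,
\begin{equation*}
\Gamma_\Delta^+(\lambda) = \sum_{k=0}^\infty \dim H_k^+(\Delta)\cdot\lambda^k = \sum_{k=0}^\infty \dim H_{k-1}^-(\Delta)\cdot\lambda^k = \lambda \cdot \sum_{k=0}^\infty \dim H_k^-(\Delta)\cdot\lambda^k = \lambda\cdot\Gamma_\Delta^-(\lambda).
\end{equation*}
Here one should observe that the $k=0$ term on the left contributes $\dim H_{-1}^-(\Delta)\cdot\lambda^0$, which is zero because $H_{-1}^-(\Delta) = H_{-1}(\Delta^0, \Delta^{-\infty}) \simeq 0$ (relative homology in negative degree vanishes), so no boundary term is lost in the reindexing. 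Finally, passing to the normalized polynomials, $\Gamma^\pm(\lambda)[L] = \Gamma_\Delta^\pm(\lambda)\cdot\lambda^i$ for the appropriate shift $i$ making $\Gamma^-$ have degree zero; multiplying the displayed identity by $\lambda^i$ gives $\Gamma^+(\lambda)[L] = \lambda\cdot\Gamma^-(\lambda)[L]$.

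The only subtlety worth checking carefully is the low-degree end of the exact sequence and the claim that the shift does not interfere: one needs $H_{-1}^-(\Delta) \simeq 0$ to conclude $\dim H_0^+(\Delta) = 0$, which is consistent with $\Gamma_\Delta^+$ starting in degree at least $1$ whenever $\Gamma_\Delta^-$ starts in degree $0$. There is no real obstacle here — the argument is essentially formal once \fullref{proposition 3.12} and \fullref{lemma 3.13} are in hand; the main thing is to be careful about indexing conventions so that the factor of $\lambda$ (rather than $\lambda^{-1}$ or $\lambda^0$) comes out correctly, and to note that the same shift $i$ normalizes both $\Gamma^+$ and $\Gamma^-$ since they differ from $\Gamma_\Delta^+$ and $\Gamma_\Delta^-$ by the identical power of $\lambda$.
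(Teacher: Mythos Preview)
Your proof is correct and follows essentially the same approach as the paper: both apply \fullref{lemma 3.13} to the triple $(\Delta^{-m},\Delta^0,\Delta^m)$, invoke \fullref{proposition 3.12} to kill the total homology, and read off $H_k^+(\Delta)\simeq H_{k-1}^-(\Delta)$ from the resulting exact sequence. The paper stops at the isomorphism and leaves the polynomial identity implicit, whereas you write out the reindexing and the normalization step explicitly; this extra care is fine but not a different argument.
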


\begin{remark} Notice that in the vector notation described in
\fullref{vector}, this says that our links will have the
same negative and positive homology vectors.
Because of this dependence of $\Gamma^+(\lambda)[L]$
on $\Gamma^-(\lambda)[L]$, in the following result statements, usually only 
$\Gamma^-(\lambda)[L]$ will be discussed.
\end{remark}

\begin{proof}
Let $\Delta $ be a difference function for $L$.  Then by \fullref{proposition 3.12}, $H_{k}\left( \Delta \right) \simeq 0$ for all $k\in 
\Z$.  Choose $m\in \R$ large enough so that all the
critical values of $\Delta $ are within the interval $\left[ -m+\epsilon
,m-\epsilon \right] $ for some $\epsilon >0$, and recall that $0$ is a
noncritical value of $\Delta $.  Therefore by \fullref{lemma 3.13} we
have the following exact sequence, for any $k\in \Z$.
\begin{equation*}
\cdots 
\overset{i_{\ast}}{\longrightarrow}H_{k}\bigl(
\Delta ^{m},\Delta ^{-m}\bigr) \overset{\pi _{\ast}}{\longrightarrow}
H_{k}\bigl( \Delta ^{m},\Delta ^{0}\bigr) \overset{\partial
_{\ast}}{\longrightarrow}H_{k-1}\bigl( \Delta ^{0},\Delta ^{-m}\bigr)
\overset{i_{\ast}}{\longrightarrow}\cdots.
\end{equation*}
Note that
\vadjust{\vskip-10pt}
\begin{align*}
H_{k}(\Delta^{m},\Delta^{-m}) &\simeq H_{k}(\Delta) \simeq 0,\\
H_{k}(\Delta^{m},\Delta ^{0}) &\simeq H_{k}^{+}(\Delta),\\
\text{and}\quad
H_{k}(\Delta^{0},\Delta^{-m}) &\simeq H_{k}^{-}(\Delta).
\end{align*}
Thus the above
sequence can be rewritten as
\begin{equation*}
\cdots 
\overset{i_{\ast}}{\longrightarrow}0\overset{\pi _{\ast}}{\longrightarrow}H_{k}^{+}\left(
\Delta \right) \overset{\partial _{\ast}}{\longrightarrow}
H_{k-1}^{-}\left( \Delta \right) \overset{i_{\ast}}{\longrightarrow}0
\overset{\pi _{\ast}}{\longrightarrow}\cdots.
\end{equation*}
Hence the $\partial _{\ast}$ maps are all isomorphisms, which tells us that for all $k\in \Z$, $H_{k+1}^{+}\left( \Delta \right) \simeq H_{k}^{-}\left(
\Delta \right) $. 
\end{proof}

Finally, we will end the section by showing how the
negative homology polynomials can sometimes detect if a link is ``ordered".
  We say a link $L = (\Lambda_1, \Lambda_0)$ is \emph{ordered} if it is not
Legendrian equivalent to $\overline L = (\Lambda_0, \Lambda_1)$.   

\begin{theorem} \label{order}
Let $L=\left( \Lambda _{1},\Lambda _{0}\right) $ be a Legendrian link  of maximal unknots.  If there does not exist an  $l\in \Z$ such that $\Gamma
^{-}\left( \lambda \right) \left[ L\right] =$ $\lambda ^{l}\cdot \Gamma
^{-}\left( \lambda ^{-1}\right) \left[ L\right] $, then the link $L$ is
ordered.
\end{theorem}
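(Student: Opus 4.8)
The plan is to prove the contrapositive: assuming $L$ is \emph{not} ordered, that is, $L$ is Legendrian equivalent to $\overline{L}=(\Lambda_0,\Lambda_1)$, I would produce an integer $l$ with $\Gamma^-(\lambda)[L]=\lambda^l\cdot\Gamma^-(\lambda^{-1})[L]$. The starting point is the observation that a difference function for $\overline{L}$ is, up to a fiber-preserving diffeomorphism, the \emph{negative} of a difference function for $L$. Indeed, if $f_1,f_0$ are LQ generating families for $\Lambda_1,\Lambda_0$ and $\Delta(x,l_1,v_1,l_0,v_0)=f_1(x,l_1,v_1)-f_0(x,l_0,v_0)$, then $\overline{\Delta}(x,l_0,v_0,l_1,v_1):=f_0(x,l_0,v_0)-f_1(x,l_1,v_1)$ is a difference function for $\overline{L}$, and $\overline{\Delta}=-\Delta\circ\psi$ where $\psi(x,l_0,v_0,l_1,v_1)=(x,l_1,v_1,l_0,v_0)$ is a fiber-preserving diffeomorphism over the $x$--base. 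Since difference functions differing by a fiber-preserving diffeomorphism have the same homology groups, $H_k^\pm(\overline{\Delta})\cong H_k^\pm(-\Delta)$ for all $k$, so $-\Delta$ may be used to compute $\Gamma^\pm(\lambda)[\overline{L}]$.

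\textbf{Duality between $\Delta$ and $-\Delta$.}
Next I would establish that, writing $D$ for the dimension of the domain of $\Delta$ and choosing $m>0$ so that all critical values of $\Delta$ lie in $(-m,m)$, one has $\dim H_k^+(-\Delta)=\dim H_{D-k}^-(\Delta)$ for every $k$. Since $(-\Delta)^c=\{\Delta\ge -c\}$, excision together with a gradient-flow deformation retract (valid because $\Delta$, a difference of linear-quadratic-at-infinity functions, is well behaved at infinity and its sub/super-level sets retract onto the corresponding pieces of $\Delta^{-1}[-m,0]$) gives
\begin{equation*}
H_k^+(-\Delta)\cong H_k\!\left(\Delta^{-1}[-m,0],\,\Delta^{-1}(0)\right),\qquad
H_{D-k}^-(\Delta)\cong H_{D-k}\!\left(\Delta^{-1}[-m,0],\,\Delta^{-1}(-m)\right).
\end{equation*}
Poincar\'e--Lefschetz duality for the cobordism $W=\Delta^{-1}[-m,0]$, together with the universal coefficient theorem over $\Z/2\Z$ and the fact that all these groups are finitely generated, identifies the dimensions of these two groups. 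This is the only genuinely analytic step, and it is the one I expect to be the main obstacle: $W$ is noncompact, so making Lefschetz duality rigorous requires a careful use of the linear-quadratic-at-infinity hypothesis on $f_1$ and $f_0$ — outside a compact set $\Delta$ is a sum of a nonzero linear form and a nondegenerate quadratic form in the fibre variables and is independent of $x$, so the ends of $W$ are products with factors that retract away and contribute nothing to the relevant relative homology; one then either compactifies the ends or phrases duality via the Borel--Moore/compactly supported versions.

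\textbf{Assembly.}
Granting the duality, I would finish as follows. From $\dim H_k^+(-\Delta)=\dim H_{D-k}^-(\Delta)$ and $H_k^\pm(\overline{\Delta})\cong H_k^\pm(-\Delta)$,
\begin{equation*}
\Gamma^+_{\overline{\Delta}}(\lambda)=\sum_{k\ge 0}\dim H_k^+(-\Delta)\,\lambda^k=\sum_{i\ge 0}\dim H_i^-(\Delta)\,\lambda^{D-i}=\lambda^{D}\cdot\Gamma^-_{\Delta}(\lambda^{-1}).
\end{equation*}
Since $\overline{L}$ is again a link of two maximal unknots, \fullref{positive/negative} applies to it and gives $\Gamma^+(\lambda)[\overline{L}]=\lambda\cdot\Gamma^-(\lambda)[\overline{L}]$; passing from $\Delta$- and $\overline{\Delta}$-polynomials to their normalized versions therefore yields an integer $l'$ with $\lambda\cdot\Gamma^-(\lambda)[\overline{L}]=\lambda^{l'}\cdot\Gamma^-(\lambda^{-1})[L]$, i.e.\ $\Gamma^-(\lambda)[\overline{L}]=\lambda^{\,l'-1}\cdot\Gamma^-(\lambda^{-1})[L]$. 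Finally, since $\Gamma^-$ is a Legendrian link invariant (as proved above) and $L$ is equivalent to $\overline{L}$, we get $\Gamma^-(\lambda)[L]=\Gamma^-(\lambda)[\overline{L}]=\lambda^{\,l'-1}\cdot\Gamma^-(\lambda^{-1})[L]$, so the integer $l=l'-1$ has the claimed property. The contrapositive of this implication is precisely the statement of the theorem. Aside from the Lefschetz-duality step, the remaining work is routine bookkeeping with \fullref{lemma 3.13}, \fullref{proposition 3.12}, \fullref{positive/negative}, the normalization conventions, and the invariance of $\Gamma^-$.
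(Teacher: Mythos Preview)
Your proof is correct and follows essentially the same route as the paper's: set $\overline{\Delta}=-\Delta$, use Poincar\'e--Lefschetz duality on the relevant sublevel sets to relate $H_k^+$ of one to $H_{N-k}^-$ of the other, then invoke \fullref{positive/negative} and normalize. The paper compresses the duality into the single line $H_k(\Delta^{+\infty},\Delta^0)\simeq H^{N-k}(\Delta^{+\infty},\Delta^0)\simeq H_{N-k}(\overline{\Delta}^0,\overline{\Delta}^{-\infty})$ without addressing the noncompactness you flag, and it also leaves implicit the final step where invariance of $\Gamma^-$ and the assumption $L\simeq\overline{L}$ convert the relation between $\Gamma^-[L]$ and $\Gamma^-[\overline{L}]$ into one involving $L$ alone; your version makes both of these explicit, which is an improvement in rigor rather than a difference in strategy.
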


\begin{remark} \label{order_vector} In terms of the vector notation, the  
above corollary says that a link $L$ is ordered if the vector associated
to $\Gamma^-$ is not symmetric.
For example, it will be shown that the
rational link $L=\left( 2,1,4\right) $, which is shown on the
left side  of \fullref{fig:intro_flypes}, has $\Gamma ^{-}\left( \lambda \right) \left[ L\right]
=2\lambda ^{0}+\lambda ^{-1}$ and  so a corresponding vector of $(1, 2)$, 
and therefore it must be ordered.  However
the rational link $L^{\prime}=\left( 2,1,2\right) $, with $\Gamma
^{-}\left( \lambda \right) \left[ L^{\prime}\right] =\lambda ^{0}+\lambda
^{-1}$ or vector equal to $(1,1)$ is potentially unordered.
\end{remark}

\begin{proof}  Let $L = (\Lambda_1, \Lambda_0)$, 
$\overline L = (\Lambda_0, \Lambda_1)$.  
We will show that   there exists an integer $l\in \Z$ such that $\Gamma ^{-}\left( \lambda \right) \left[ L\right] =\lambda ^{l}\cdot \Gamma
^{-}\left( \lambda ^{-1}\right) \bigl[ \overline{L}\bigr] $.
Let $F_{1},F_{0}$ be LQ generating families for $\Lambda _{1}$ and $\Lambda
_{0}$, respectively.  Then $\Delta =F_{1}-F_{0}$ is a difference function
for $L$ and $\overline{\Delta}=F_{0}-F_{1}=-\Delta $ is a difference
function for $\overline{L}$.   Then if $N$ denotes the dimension of
the domain of $\Delta$, we have 
\begin{align*}H_k^+(\Delta) = H_{k}(\Delta^{+\infty}, \Delta^0) &\simeq H^{N-k}(\Delta^{+\infty}, \Delta^0)\\
&\simeq  H_{N- k}\bigl( \overline{\Delta}^0,
\overline{\Delta}^{-\infty}\bigr) = H_{N- k}^-(\overline \Delta). \end{align*}
Therefore the homology polynomials of $\Delta $ and the homology polynomials
of $\overline{\Delta}$ are related as follows:
$$\Gamma _{\Delta}^{-}\bigl( \lambda \bigr) = 
\lambda ^{N}\cdot \Gamma _{\overline{\Delta}}^{+}\bigl( \lambda^{-1}\bigr)$$
Since the homology polynomials of $L$ and $\overline{L}$ are normalized
versions of the polynomials of $\Delta $ and $\overline{\Delta}$, there
exists an integer $l^\prime$ such that
$$ \Gamma ^{-}\left( \lambda \right) [L]  = 
\lambda ^{l^\prime}\cdot \Gamma ^{+}\bigl( \lambda ^{-1}\bigr)
\bigl[\overline{L}\bigr].$$
By \fullref{positive/negative},
$  \Gamma
^{+}\left( \lambda ^{-1}\right) \bigl[ \overline L\bigr] =\lambda ^{-1}\cdot \Gamma
^{-}\left( \lambda ^{-1}\right) \bigl[ \overline L\bigr] $.
Therefore 
$$
 \Gamma ^{-}\left( \lambda \right) \bigl[ L\bigr]  = \lambda
^{l^\prime}\cdot \Gamma ^{+}\bigl( \lambda ^{-1}\bigr) \bigl[ 
\overline{L}\bigr] =\lambda ^{l^\prime}\cdot \lambda ^{-1}\cdot \Gamma
^{-}\bigl( \lambda ^{-1}\bigr) \bigl[ \overline{L}\bigr].\proved
$$
\end{proof}

\section{Polynomial calculations} \label{poly_calcs}
In order to calculate the homology polynomials for a given Legendrian link $L=\left( \Lambda _{1},\Lambda _{0}\right) $,  it will sometimes be sufficient
to know the nondegenerate
critical points of a difference function for $L$, as well as the critical
values and indices of the critical points.  Recall that a critical point
corresponds to a point $\left( \left( x_{0},y_{0},z_{1}\right) ,\left(
x_{0},y_{0},z_{0}\right) \right) \in \Lambda _{1}\times \Lambda _{0}$, and
the critical value is equal to $z_{1}-z_{0}$.  Thus we can find all
critical points and their values from looking at the front projection of $L$
, without finding the difference function $\Delta $ explicitly.  In this
section we explain how we can also recover the index of a critical point, up
to a shift, from the front projection of $L$.  The definitions and
propositions in this section are based on those found in 
\cite[Section~5]{Generating}.

We begin with some definitions for use with the front projection of a knot
or link.  Given a Legendrian knot $\Lambda \subset  \jetR $, let $\pi _{xz}\left( \Lambda \right) $ be the front
projection of $\Lambda $. 
Let $C$ be
the set of points in $\Lambda $ whose image under $\pi _{xz}$ is a cusp
point.  We define the \emph{branches} of $\Lambda $ to be the connected
components of $\Lambda \setminus C$.  Branches $B_{0}$, $B_{1}$ are said to
be \emph{adjacent} if their closures, $\overline{B_{0}}$ and $\overline{B_{1}}$, intersect.  Given two adjacent branches $B_{0}$ and $B_{1}$, we say $B_{1}>B_{0}$ if there exists some $b\in \overline{B_{0}}$ $\cap \overline{B_{1}}$ and a path $\gamma \co \left[ 0,1\right] \rightarrow \pi _{xz}\left(
\Lambda \right) $ such that $\gamma \bigl[ 0,\frac{1}{2}\bigr)\subset \pi
_{xz}\left( B_{0}\right) $, $\gamma \bigl(\frac{1}{2},0\bigr]\subset \pi _{xz}\left(
B_{1}\right) $, and $\gamma \bigl( \frac{1}{2}\bigr) =\pi _{xz}\left(
b\right) $ where $\pi _{xz}\left( b\right) $ is an up-cusp along the path.

Now we will describe a way of assigning integers, called branch indices, to
each branch of a knot.   Let $\Lambda \subset  \jetR $ be a maximal unknot, and choose $p_{0}$  to be a marked
point of $\Lambda $ such that $\pi _{xz}\left( p_{0}\right) $ is not a cusp
point of $\pi _{xz}\left( \Lambda \right) $.  Let $\left\{ B_{i}\right\} $
be the set of branches of $\Lambda $ such that $B_{0}$ is the branch
containing $p_{0}$.  We say $B_{0}$ is the \emph{initial branch} of
$\Lambda $.  We then define the branch index $i_{Br}\co \left\{ B_{i}\right\}
\rightarrow \Z$ as follows:

\begin{enumerate}
\item[(1)] $i_{Br}\left( B_{0}\right) =0$, and

\item[(2)] $i_{Br}\left( B_{i}\right) -i_{Br}\left( B_{j}\right) =1$ if $B_{i}$, $B_{j}$ are adjacent with $B_{i}>B_{j}$.
\end{enumerate}

Note that given a marked point, the branch index is well-defined for all
branches of $\Lambda $.  Recall that for each point on a branch, there
is a corresponding critical point of the generating family restricted to a
fiber.   As explained in
\cite[Proposition~5.3]{Generating}, there is some integer so that the branch index of a point corresponds, 
up to a shift by this integer, to the index of the corresponding fiber critical point.

The final piece we need to calculate indices of critical points of a
difference function $\Delta $ is called the graph index of a critical point.

\begin{definition}
Let $\Lambda _{1},\Lambda _{0}\subset \jetR $
be Legendrian knots with branches $B_{1},B_{0}$ with $\left( x_{0},y_{0},z_{1}\right) \in B_{1}$ and $\left(
x_{0},y_{0},z_{0}\right) \in B_{0}$.  Then there exists a neighborhood $U$
of $x_{0}$ in $\R$ and functions $g_{1},g_{0}\co U\rightarrow \R$
such that near $\left( x_{0},y_{0},z_{1}\right) $, $\pi _{xz}\left(
B_{1}\right) =\{ \left( x,g_{1}\left( x\right) \right) \} $ and near $\left(
x_{0},y_{0},z_{0}\right) $, $\pi _{xz}\left( B_{0}\right) = \{ \left(
x,g_{0}\left( x\right) \right) \}$.  Note that $x_{0}$ is a critical point of 
$\Gamma =g_{1}-g_{0}$.  We say $\left( \left( x_{0},y_{0},z_{1}\right)
,\left( x_{0},y_{0},z_{0}\right) \right) \in \Lambda _{1}\times \Lambda _{0}$
is \emph{nondegenerate} if $x_{0}$ is a nondegenerate critical point of $g_{1}-g_{0}$.  The \emph{graph index} $i_{\Gamma}$ of a nondegenerate
critical point $x_{0}$ is the Morse index of $g_{1}-g_{0}$ at $x_{0}$.
\end{definition}

The following proposition allows us to calculate the indices of critical
points of a difference function $\Delta $.

\begin{proposition} \label{indices}
Let $L=\left( \Lambda _{1},\Lambda _{0}\right) $ be a Legendrian link  of maximal unknots, and let $\Delta $ be a
difference function for $L$.  Suppose  $q\in \R \times \R^{1+ n_1} \times \R^{1+n_0}$ is a 
nondegenerate critical point of $\Delta $ with 
corresponding point
 $\left( \left( x_{0},y_{0},z_{1}\right)
,\left( x_{0},y_{0},z_{0}\right) \right) \in \Lambda _{1}\times \Lambda _{0}$.
Say that $B_{1},B_{0}$ are the branches of $\Lambda _{1},\Lambda _{0}$
containing $\left( x_{0},y_{0},z_{1}\right) ,\left( x_{0},y_{0},z_{0}\right) 
$, respectively.
 Then, for any choice of initial branches for $\Lambda_1$ and $\Lambda_0$,
there is a $c \in \Z$ such that the
index of $q$  is equal to $i_{Br}\left( B_{1}\right) -i_{Br}\left( B_{0}\right)
+i_{\Gamma}\left( x_{0}\right) + c $.
\end{proposition}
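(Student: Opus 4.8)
The plan is to compute the Morse index of $\Delta$ at $q$ by bringing $\Delta$ into a local normal form near $q$ in which the base variable $x$ and the two blocks of fiber variables decouple. Write $\Delta(x,l_1,v_1,l_0,v_0)=F_1(x,l_1,v_1)-F_0(x,l_0,v_0)$, set $u_i=(l_i,v_i)$ and $N_i=1+n_i$, and write $q=(x_0,u_1^0,u_0^0)$. The condition $\nabla\Delta(q)=0$ says that $(x_0,u_1^0)$ is a fiber critical point of $F_1$, that $(x_0,u_0^0)$ is a fiber critical point of $F_0$, and that the slopes $\partial_xF_1,\partial_xF_0$ agree at these points; since the corresponding points $(x_0,y_0,z_i)\in\Lambda_i$ lie on the branches $B_i$, they are not cusp points, so near each of them $\Lambda_i$ is a graph over the $x$--axis.

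First I would check that each fiber Hessian $\partial_{u_i}^2F_i(x_0,u_i^0)$ is nondegenerate. The fiber critical locus $\Sigma_{F_i}$ is a smooth curve, $i_{F_i}$ maps it diffeomorphically onto $\Lambda_i$, and $\Lambda_i$ projects locally diffeomorphically to the $x$--axis at $(x_0,y_0,z_i)$; hence the projection $\Sigma_{F_i}\to\R$, $(x,u_i)\mapsto x$, is a local diffeomorphism near $(x_0,u_i^0)$, so $T_{(x_0,u_i^0)}\Sigma_{F_i}$ is transverse to the fiber $\{x=x_0\}$ and meets it only in $0$. Since that intersection is exactly $\ker\partial_{u_i}^2F_i(x_0,u_i^0)$, the fiber Hessian is invertible; call its index $k_i$. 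Next I would apply the Morse lemma with $x$ as a parameter to each $F_i$ separately: by the implicit function theorem the fiber critical point is a smooth section $x\mapsto(x,u_i(x))$, and after a fiber--preserving diffeomorphism (and a translation of the fiber coordinates) we may assume, on a neighborhood of $(x_0,u_i^0)$, that $F_i(x,u_i)=g_i(x)+Q_i(x)(u_i)$, where $g_i(x)=F_i(x,u_i(x))$ and $Q_i(x)$ is a smooth family of nondegenerate quadratic forms with $Q_i(x_0)$ of index $k_i$. Differentiating $g_i$ gives $(x,g_i'(x),g_i(x))=(x,\partial_xF_i(x,u_i(x)),F_i(x,u_i(x)))\in\Lambda_i$, so $g_i$ is precisely the function appearing in the definition of the graph index, and by the nondegeneracy hypothesis $x_0$ is a nondegenerate critical point of $g_1-g_0$ of Morse index $i_\Gamma(x_0)$.

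Composing the two diffeomorphisms — one acting in $u_1$, one in $u_0$, both fixing $x$ — yields a local diffeomorphism of the domain of $\Delta$ fixing $q$, in which $\Delta=(g_1-g_0)(x)+Q_1(x)(u_1)-Q_0(x)(u_0)$. At $q$ the Hessian of this expression is block diagonal, so the index of $q$ is the index of $g_1-g_0$ at $x_0$ plus the index of $Q_1(x_0)$ plus the index of $-Q_0(x_0)$, namely $i_\Gamma(x_0)+k_1+(N_0-k_0)$. Finally, by the fact recalled just before the proposition (the LQ analogue of \cite[Proposition~5.3]{Generating}), the index $k_i$ of the fiber critical point equals $i_{Br}(B_i)+a_i$, where $a_i\in\Z$ depends on $F_i$ and on the chosen initial branch of $\Lambda_i$ but not on the critical point. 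Substituting gives that the index of $q$ equals $i_{Br}(B_1)-i_{Br}(B_0)+i_\Gamma(x_0)+c$ with $c=a_1-a_0+N_0$, a constant independent of $q$.

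I expect the main obstacle to be the two normal--form steps and verifying that they combine cleanly: establishing nondegeneracy of the fiber Hessians over branch points (through the transversality of $\Sigma_{F_i}$ to the fibers), and invoking the parametrized Morse lemma in such a way that it genuinely produces $g_i$ as the base part while keeping the two reductions independent, so that the Hessian of $\Delta$ at $q$ is block diagonal. The remaining points — that the index of $-Q_0(x_0)$ is $N_0-k_0$, and the use of \cite[Proposition~5.3]{Generating} to make the shift $c$ the same for every critical point — are routine.
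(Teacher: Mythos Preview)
Your argument is correct and is essentially the approach indicated by the paper, which simply refers to \cite[Proposition~5.5]{Generating}: reduce each $F_i$ near the fiber critical point by the parametrized Morse lemma to $g_i(x)+Q_i(x)(u_i)$, observe that the Hessian of $\Delta$ at $q$ is block diagonal (since $Q_i(x)$ vanishes to second order at $u_i=0$, the $x$--$u_i$ cross terms die), and then read off the index as $i_\Gamma(x_0)+k_1+(N_0-k_0)$, with the shift to branch indices supplied by \cite[Proposition~5.3]{Generating}. Your care in checking nondegeneracy of the fiber Hessian via transversality of $\Sigma_{F_i}$ to the fiber at non-cusp points, and in verifying block-diagonality despite the $x$--dependence of $Q_i$, fills in exactly the details one would need.
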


We will often refer to the integer $i_{Br}\left( B_{1}\right) -i_{Br}\left( B_{0}\right)
+i_{\Gamma}\left( x_{0}\right)$ as the {\it relative index} of $q$;  this relative index
is only well-defined
up to a constant depending on the choice of initial branches.
\fullref{indices} can be proved with minor adaptions to the proof of
\cite[Proposition~5.5]{Generating}. 

The following lemma will be our main tool for calculating the homology polynomials
of particular links.  This is essentially
\cite[Proposition~4.2]{Generating}.  The proof
found there consists of studying a number of long exact sequences as given
by \fullref{lemma 3.13}.

\begin{lemma}
\label{proposition 4.2}Suppose the Legendrian link $L=\left( \Lambda
_{1},\Lambda _{0}\right) $ of
maximal unknots has a difference function $\Delta \co \R \times \R^{1 + n_1}
\times \R^{1 + n_0} \rightarrow \R$ with critical values $c_{0}^{\pm
},c_{1}^{\pm},\dots ,c_{n}^{\pm}$ and noncritical values $a_{0},a_{1},\dots ,a_{n},b_{0},b_{1},\dots ,b_{n}$ satisfying
\begin{equation*}
a_{0}<c_{0}^{-}<a_{1}<c_{1}^{-}<\dots
<a_{n}<c_{n}^{-}<0<c_{n}^{+}<b_{n}<\dots <c_{1}^{+}<b_{1}<c_{0}^{+}<b_{0}\text{.}
\end{equation*}
If

\begin{enumerate}
\item[(1)] For $j=0,1,\dots ,n$, there exist $w_{j}$ nondegenerate critical
points with value $c_{j}^{+}$ and $w_{j}$ nondegenerate critical points with
value $c_{j}^{-}$;

\item[(2)] For a given labeling of branch indices, all critical points of
value $c_{j}^{+}$ have relative index $i_{j}+1$ and all critical points of
value $c_{j}^{-}$ have relative index $i_{j}$;

\item[(3)] For $j=1,2,\dots ,n$, $H_{\ast}\left( \Delta ^{b_{j}},\Delta
^{a_{j}}\right) =0$ for all $\ast \in \Z$;
\end{enumerate}

Then
$$
\Gamma ^{-}\left( \lambda \right) \left[ L\right] =\lambda ^{h}\cdot
\sum_{j=0}^{n}w_{j}\lambda ^{i_{j}},
$$
where $h$ is chosen so that $\Gamma ^{-}\left( \lambda \right) \left[ L
\right] $ has degree $0$.
\end{lemma}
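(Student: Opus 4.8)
The plan is to analyze the long exact sequences of \fullref{lemma 3.13} for the filtration of $\R \times \R^{1+n_1} \times \R^{1+n_0}$ by the sublevel sets $\Delta^{a_0} \subset \Delta^{c_0^-} \subset \Delta^{a_1} \subset \cdots$, working from the bottom up to compute $H_*^-(\Delta) = H_*(\Delta^0, \Delta^{-\infty})$. The strategy is to show inductively that $H_*(\Delta^{b_j}, \Delta^{a_0})$ stabilizes in a controlled way: passing a pair of critical points of relative index $i_j$ and $i_j+1$ contributes a copy of $(\Z/2\Z)^{w_j}$ in degree $i_j$ (after the global index shift $c$ of \fullref{indices}), and the hypothesis that $H_*(\Delta^{b_j}, \Delta^{a_j}) = 0$ guarantees that the critical points of value $c_j^+$ exactly cancel the homology generated by those of value $c_j^-$, so that nothing accumulates between consecutive "$a$--levels."

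First I would set up the Morse-theoretic bookkeeping: between $a_{j}$ and $c_j^-$ we add $w_j$ critical points of relative index $i_j$, hence of absolute index $i_j + c$, so $H_*(\Delta^{a_{j+1}}, \Delta^{a_j})$ — which by excision and the absence of other critical values equals $H_*(\Delta^{a_{j+1}}, \Delta^{c_j^-} \text{ region})$ — is concentrated; more precisely, I would use the long exact sequence of the triple $(\Delta^{a_{j+1}}, \Delta^{b_j}, \Delta^{a_j})$ together with hypothesis (3), $H_*(\Delta^{b_j}, \Delta^{a_j}) = 0$ for $j \geq 1$, to conclude $H_*(\Delta^{a_{j+1}}, \Delta^{a_j}) \cong H_*(\Delta^{a_{j+1}}, \Delta^{b_j})$. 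The latter records only the $w_j$ critical points of value $c_j^+$, which have relative index $i_j + 1$, i.e. absolute index $i_j + 1 + c$; again since passing a single Morse critical value attaches a cell in that one dimension, I get $H_*(\Delta^{a_{j+1}}, \Delta^{a_j})$ concentrated in degree $i_j + 1 + c$ with rank $w_j$ (the boundary maps in the cellular complex must vanish for dimension reasons, as there is nothing in adjacent degrees in this relative complex). This handles $j = 1, \dots, n$.

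Next I would treat the two ends separately. For the negative end, $\Delta^{-\infty} = \Delta^{-m}$ with $-m < a_0$, and there are no critical values below $c_0^-$, so $H_*(\Delta^{a_0}, \Delta^{-\infty}) = 0$; thus $H_*^-(\Delta) = H_*(\Delta^0, \Delta^{-\infty}) \cong H_*(\Delta^0, \Delta^{a_0})$. Then $\Delta^0$ differs from $\Delta^{a_n}$ only by passing the critical values $c_n^-, \dots$ wait — more carefully, between $a_0$ and $0$ lie exactly the critical values $c_0^-, c_1^-, \dots, c_n^-$ and $a_1, \dots, a_n$; assembling the pieces $H_*(\Delta^{a_{j+1}}, \Delta^{a_j}) \oplus H_*(\text{the }c_j^-\text{ jump})$ via the long exact sequences, and using that each such piece is concentrated with vanishing connecting maps, I would conclude
\begin{equation*}
H_k^-(\Delta) \cong \bigoplus_{j=0}^{n} \left( \Z/2\Z \right)^{w_j} \text{ when } k = i_j + c, \qquad H_k^-(\Delta) = 0 \text{ otherwise},
\end{equation*}
so that $\Gamma^-_\Delta(\lambda) = \sum_{j=0}^n w_j \lambda^{i_j + c}$. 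Normalizing to degree zero absorbs the constant $c$ and yields $\Gamma^-(\lambda)[L] = \lambda^h \sum_{j=0}^n w_j \lambda^{i_j}$, as claimed.

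**The main obstacle** I anticipate is making rigorous the claim that all connecting homomorphisms in the relevant long exact sequences vanish, so that the homology genuinely splits as a direct sum of the contributions rather than merely admitting a filtration with those subquotients. This is where the hypothesis that the relative indices of $c_j^+$ and $c_j^-$ differ by exactly $1$, combined with (3), does the real work: it forces the only possibly-nonzero differentials to be those that cancel $c_j^-$ against $c_j^+$, and (3) says that cancellation is total. I would lean on the structure already established in \cite[Proposition~4.2]{Generating}, since the argument there is precisely this kind of iterated long-exact-sequence chase; the adaptation to the LQ setting is cosmetic, as the only feature of the domain that is used is that it is a manifold on which $\Delta$ is a well-behaved (Morse, proper-enough after the linear-quadratic normalization) function with the listed critical data.
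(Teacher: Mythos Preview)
Your overall strategy---iterating the long exact sequences of \fullref{lemma 3.13} and ultimately deferring to \cite[Proposition~4.2]{Generating}---is precisely what the paper does; the paper offers no independent argument beyond that citation and the remark that the proof ``consists of studying a number of long exact sequences.'' In that sense your last paragraph already matches the paper.

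However, the sketch you give before that deferral contains a concrete error that reflects a misreading of the hypotheses. The triple $(\Delta^{a_{j+1}}, \Delta^{b_j}, \Delta^{a_j})$ is not nested: the stated ordering places every $a_i$ below $0$ and every $b_i$ above $0$, so $a_{j+1} < 0 < b_j$ and hence $\Delta^{a_{j+1}} \subset \Delta^{b_j}$, not the reverse. The isomorphism $H_*(\Delta^{a_{j+1}}, \Delta^{a_j}) \cong H_*(\Delta^{a_{j+1}}, \Delta^{b_j})$ is therefore meaningless, and your conclusion that $H_*(\Delta^{a_{j+1}}, \Delta^{a_j})$ is concentrated in degree $i_j + 1 + c$ is off by one: the only critical value between $a_j$ and $a_{j+1}$ is $c_j^{-}$, of absolute index $i_j + c$. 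Your later sentence ``$H_*(\Delta^{a_{j+1}}, \Delta^{a_j}) \oplus H_*(\text{the }c_j^-\text{ jump})$'' is then redundant---those two things are the same group.

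The way hypothesis~(3) actually enters is through triples that respect the nesting $a_0 < \cdots < a_n < 0 < b_n < \cdots < b_0$. For instance, the triple $(\Delta^{b_j}, \Delta^{0}, \Delta^{a_j})$ combined with $H_*(\Delta^{b_j}, \Delta^{a_j}) = 0$ yields $H_k(\Delta^{b_j}, \Delta^{0}) \cong H_{k-1}(\Delta^{0}, \Delta^{a_j})$, tying the positive and negative sides together; the induction then runs simultaneously on both sides via the triples $(\Delta^0, \Delta^{a_{j+1}}, \Delta^{a_j})$ and $(\Delta^{b_j}, \Delta^{b_{j+1}}, \Delta^0)$, and it is this interplay that forces the connecting maps to vanish. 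You correctly identify this vanishing as the ``main obstacle,'' but the mechanism in your sketch does not yet address it, since the triple you invoke does not exist.
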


We can now state and prove formulas for  the homology
polynomials for several types of links.  The first links we
consider are the rational links $\left( 2w_{n},k_{n},2w_{n-1},\dots
,k_{1},2w_{0}\right) $ as described in \fullref{intro}.
\fullref{theorem 6.1} is similar to \cite[Theorem~6.1]{Generating}, and the proof follows the same format.

\begin{theorem}
\label{theorem 6.1}Let $L=\left( 2w_{n},k_{n},2w_{n-1},\dots
,k_{1},2w_{0}\right) $ (see \fullref{fig:rational_link}).  Then 
$$
\Gamma^{-}\left( \lambda \right) \left[ L\right] = w_{0}\lambda
^{0}+w_{1}\lambda ^{-k_{1}}+w_{2}\lambda ^{-\left( k_{1}+k_{2}\right)
}+\dots +w_{n}\lambda ^{-\left( k_{1}+\dots +k_{n}\right)}.
$$
\end{theorem}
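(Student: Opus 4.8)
The plan is to reduce the whole computation to \fullref{proposition 4.2}, and to verify its hypotheses by reading data off a convenient front projection of $L$. First, isotope $L$ into the standard recursive picture of \fullref{fig:recursive}; by the Existence Theorem each component $\Lambda_i$ has an LQ generating family $F_i$, so $\Delta=F_1-F_0$ is a difference function for $L$. Since $\Gamma^-(\lambda)[L]$ is an invariant, it suffices to compute it for this particular $\Delta$.

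The next step is to enumerate the critical points of $\Delta$. By the correspondence recalled at the start of \fullref{poly_calcs}, these are the pairs of points of $\Lambda_1\times\Lambda_0$ lying over a common $x$ with equal slope, and the critical value of such a pair is the height difference $z_1-z_0$. In the $j$-th block of $2w_j$ horizontal crossings one finds exactly $w_j$ such parallel pairs with $z_1-z_0>0$ and $w_j$ with $z_1-z_0<0$; choosing the heights in the front so that the blocks are stacked in decreasing order, one can pick noncritical values $a_j,b_j$ interleaving the resulting critical values $c_j^\pm$ exactly as in the chain of inequalities hypothesized in \fullref{proposition 4.2}. This yields that ordering together with condition (1).

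I would then compute relative indices via \fullref{indices}. Fix initial branches on $\Lambda_1$ and $\Lambda_0$. Each of the $k_i$ vertical crossings shifts the difference $i_{Br}(B_1)-i_{Br}(B_0)$ of branch indices of the branches meeting in the $j$-th block by $-1$, so at the $j$-th horizontal block this difference equals $-(k_1+\dots+k_j)$ (up to the global constant $c$ of \fullref{indices}). The parallel tangencies inside a horizontal block are of local-maximum/saddle type, contributing graph index $0$ to the $c_j^-$ critical points and $1$ to the $c_j^+$ ones. Hence all critical points of value $c_j^-$ have relative index $i_j:=-(k_1+\dots+k_j)$ (with $i_0=0$) and all of value $c_j^+$ have relative index $i_j+1$, which is condition (2).

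The remaining and most delicate point is condition (3): $H_*(\Delta^{b_j},\Delta^{a_j})=0$ for $j=1,\dots,n$. The critical values lying in $(a_j,b_j)$ are precisely $c_j^\pm,c_{j+1}^\pm,\dots,c_n^\pm$ together with $0$, i.e.\ those coming from the ``inner'' part of the link governed by the subvector $(2w_n,k_n,\dots,k_j,\dots)$ once the outermost $j$ horizontal blocks are disregarded. Following the mechanism of the proof of \fullref{proposition 3.12}, I would deform this inner configuration through a one-parameter family of pairs of LQ generating families --- sliding the relevant strands of $\Lambda_1$ past those of $\Lambda_0$ in the $x$-direction until they share no base point --- keeping $a_j(t),b_j(t)$ noncritical throughout; at the end there are no critical values in the interval, so by \fullref{lemma 3.10} the relative homology vanishes. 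Making this blockwise deformation precise, which parallels \cite[Theorem~6.1]{Generating}, is where the real work lies and is essentially an induction on $n$. With conditions (1)--(3) in hand, \fullref{proposition 4.2} gives $\Gamma^-(\lambda)[L]=\lambda^{h}\sum_{j=0}^{n}w_j\lambda^{i_j}$, and since the $j=0$ term $w_0\lambda^0$ carries the top degree $0$ we get $h=0$, which is the asserted formula.
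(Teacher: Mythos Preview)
Your proposal is correct and follows essentially the same approach as the paper: position the front so that the difference function has exactly the $2(w_0+\dots+w_n)$ critical points with the stated values and relative indices, then verify hypothesis (3) of \fullref{proposition 4.2} by a deformation (as in the proof of \fullref{proposition 3.12}) that removes the inner $2w_n+\dots+2w_j$ crossings while keeping $a_j,b_j$ noncritical, and invoke \fullref{lemma 3.10}. The paper does not phrase the deformation step as an induction on $n$ but simply performs it for each $j$; otherwise your outline matches the published argument.
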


\begin{figure}[ht]
\labellist\tiny
\pinlabel {$\Lambda_1$} [br] at 60 230
\pinlabel {$\Lambda_0$} [tr] at 45 80
\pinlabel {$2w_0$} at 85 143
\pinlabel {$k_{n{-}1}$} at 303 125
\pinlabel {$2w_{n-1}$} at 303 214
\pinlabel {$k_n$} at 483 195
\pinlabel {$2w_n$} at 485 287
\endlabellist
\centerline{\includegraphics[height=2.5in]{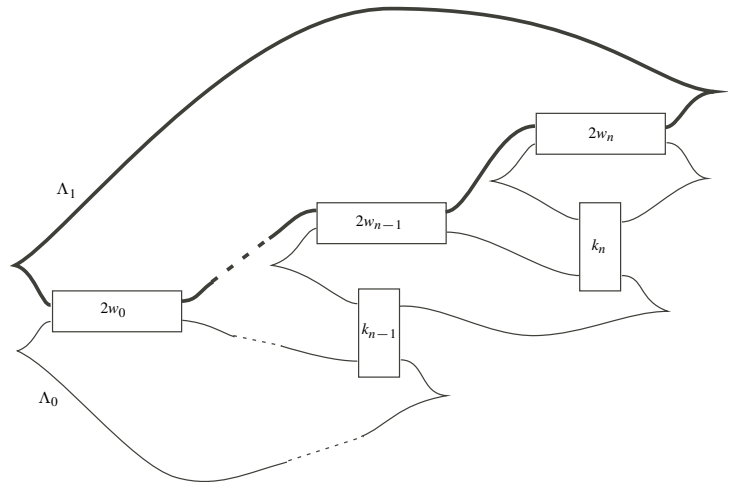}}
\caption{The link $L=\left(2w_{n},k_{n},\dots ,2w_{1},k_{1},2w_{0}\right)$}
        \label{fig:rational_link}
\end{figure} 

\begin{remark} \label{rat_vector}
Using the vector notation, this says that the negative homology
polynomial of $L=\left( 2w_{n},k_{n},2w_{n-1},\dots
,k_{1},2w_{0}\right) $  is given by the vector
$$(w_n, 0, \dots, 0, w_{n-1}, 0, \dots,0, w_0)$$
where there are precisely $(k_j-1)$ zeros between $w_j$ and $w_{j-1}$.
\end{remark}

\begin{proof}
It is possible to isotope $L$ so that it has a difference function 
$\Delta \co \R \times \R^{1 + n_1} \times \R^{1 + n_0}  \rightarrow \R$ with $2\left( w_{0}+w_{1}+\dots
+w_{n}\right) $ nondegenerate critical points.  In particular, we can
choose the branch indices such that, for $i=0,1,\dots ,n$, we have the
following:  $\Delta $ has $w_{i}$ critical points with critical value $c_{i}^{+}>0$ and relative index $1$ if $i=0$, or relative index $1-\sum_{j=1}^{i}k_{j}$
otherwise, and $w_{i}$ critical points with critical value $c_{i}^{-}<0$ and
relative index $0$ if $i=0$, or relative index $-\sum_{j=1}^{i}k_{j}$ otherwise. 
Furthermore the critical values are related as follows:
\begin{equation*}
c_{0}^{-}<c_{1}^{-}<\dots <c_{n}^{-}<0<c_{n}^{+}<\dots <c_{1}^{+}<c_{0}^{+}
\end{equation*}
\fullref{fig:rat_calc} illustrates one such construction for the link $\left( 2,1,4\right) $.

\begin{figure}[ht]
\labellist\small
\pinlabel {$c_0^+$} [b] at 92 582
\pinlabel {$c_0^-$} [t] at 144 582
\pinlabel {$c_0^+$} [b] at 179 582
\pinlabel {$c_0^-$} [t] at 232 582
\pinlabel {$c_1^+$} [b] at 314 582
\pinlabel {$c_1^-$} [t] at 377 582
\pinlabel {$0$} [t] at 372 403
\pinlabel {$1$} [t] at 548 582
\endlabellist
\centerline{\includegraphics[height=1.8in]{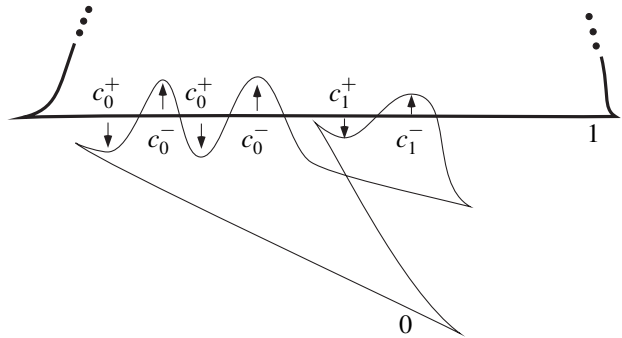}}
\caption{A portion of the Legendrian link $(2,1,4)$; the upper branch of
$\Lambda_1$ is completed to have sufficiently positive slope on the left
side so that its
difference function $\Delta $ has $2\left( 2 + 1 \right) $ nondegenerate
critical points, represented by pairs of points $\left( \left(
x_{0}, 0 ,z_{1}\right) ,\left( x_{0}, 0 ,z_{0}\right) \right) \in
\Lambda _{1}\times \Lambda _{0}$.  In this figure, each pair is joined by a directed
line segment, and the label ($c_{0}^{\pm}$ or $c_{1}^{\pm}$) near  each
pair denotes the critical value.  An initializing choice  of branch labels is indicated.}
        \label{fig:rat_calc}
\end{figure}

Now for all $j$, $1\leq j\leq n$, choose noncritical values $a_{j}$ and $b_{j}$ such that 
\begin{equation*}
c_{j-1}^{-}<a_{j}<c_{j}^{-}<0<c_{j}^{+}<b_{j}<c_{j-1}^{+}\text{;}
\end{equation*}
for $j = 0$, we choose noncritical values $a_0, b_0$ satisfying
$$a_0 < c_0^- < 0 < c_0^+ < b_0.$$
 For all $j$, we may apply a
deformation argument as in the proof of \fullref{proposition 3.12}
to construct a 1--parameter family of functions $\Delta
_{t}$ such that $a_{j},b_{j}$ are noncritical values of $\Delta _{t}$ for
all $t\in \left[ 0,1\right] $, $\Delta _{0}=\Delta $, and $\Delta _{1}$ has
no critical points with values in $[ a_{j},b_{j}] $; this isotopy is
not an isotopy of links but rather eliminates the $2 w_n + \dots + 2w_j$ crossings.
 Thus by
\fullref{lemma 3.10}, $H_{\ast}\bigl( \Delta _{0}^{b_{j}},\Delta
_{0}^{a_{j}}\bigr) \simeq H_{\ast}\bigl( \Delta _{1}^{b_{j}},\Delta
_{1}^{a_{j}}\bigr) =0$ for all $\ast \in \Z$.  The theorem now
follows from \fullref{proposition 4.2}.
\end{proof}

\fullref{theorem 6.1} (\fullref{rat_vector}) together with \fullref{order}
(\fullref{order_vector})  then prove 
\begin{corollary}  \label{rat_order} Let $L=\left( \Lambda _{1},\Lambda _{0}\right) $ be the rational Legendrian
link
$$\left( 2w_{n},k_{n},\dots ,2w_{1},k_{1},2w_{0}\right).$$
If the
vector $\left( 2w_{n},k_{n},\dots ,2w_{1},k_{1},2w_{0}\right) $ is not
palindromic, then the link $L$ is ordered.
 \end{corollary}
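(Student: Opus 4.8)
The plan is to deduce this directly from \fullref{theorem 6.1} and \fullref{order}. By \fullref{order}, it suffices to show that when the vector $(2w_n,k_n,\dots,2w_1,k_1,2w_0)$ is \emph{not} palindromic, there is no $l\in\Z$ with $\Gamma^-(\lambda)[L]=\lambda^l\cdot\Gamma^-(\lambda^{-1})[L]$. I would instead prove the contrapositive: assuming such an $l$ exists, I show the vector must be palindromic.

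First I would substitute the formula from \fullref{theorem 6.1}. Writing $s_j=k_1+k_2+\dots+k_j$ with $s_0=0$, that theorem gives $\Gamma^-(\lambda)[L]=\sum_{j=0}^n w_j\lambda^{-s_j}$, hence $\lambda^l\cdot\Gamma^-(\lambda^{-1})[L]=\sum_{j=0}^n w_j\lambda^{\,l+s_j}$. Since every $k_i>0$, the exponents $-s_0>-s_1>\dots>-s_n$ on the first side are strictly decreasing and the exponents $l+s_0<l+s_1<\dots<l+s_n$ on the second side are strictly increasing; in particular each side is already written as a sum of $n+1$ distinct monomials with positive coefficients $w_j$, so no terms combine or cancel. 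The assumed equality of Laurent polynomials therefore forces the two lists of (exponent, coefficient) pairs, sorted by exponent, to agree term by term.

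Sorting by increasing exponent, the $j$-th term on the left is $w_{n-j}\lambda^{-s_{n-j}}$ and the $j$-th term on the right is $w_j\lambda^{\,l+s_j}$, for $j=0,\dots,n$. Matching coefficients gives $w_j=w_{n-j}$ for all $j$. Matching exponents gives $l+s_j=-s_{n-j}$ for all $j$; subtracting the identity at $j$ from the identity at $j+1$ yields $s_{j+1}-s_j=s_{n-j}-s_{n-j-1}$, that is $k_{j+1}=k_{n-j}$ for $j=0,\dots,n-1$, equivalently $k_i=k_{n-i+1}$ for $i=1,\dots,n$. The relations $w_i=w_{n-i}$ and $k_i=k_{n-i+1}$ are precisely the statement that the $(2n{+}1)$--entry vector $(2w_n,k_n,2w_{n-1},k_{n-1},\dots,2w_1,k_1,2w_0)$ equals its own reversal, so the vector is palindromic, completing the contrapositive. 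In the language of \fullref{rat_vector} and \fullref{order_vector}, this just says the negative homology vector $(w_n,0,\dots,0,w_{n-1},0,\dots,0,w_0)$ is symmetric exactly when the interleaved vector is.

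I do not expect any serious obstacle; once \fullref{theorem 6.1} is available the argument is purely formal. The only points needing care are routine: using $k_i>0$ to be certain that the exponents on each side are genuinely distinct, so that ``sort and compare term by term'' is legitimate, and the index bookkeeping that identifies the pair of conditions $w_i=w_{n-i}$, $k_i=k_{n-i+1}$ with palindromicity of the interleaved $(2n{+}1)$--vector. This last bookkeeping step is the only place one could slip.
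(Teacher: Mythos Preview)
Your proposal is correct and follows exactly the approach the paper indicates: the paper's entire proof of this corollary is the one-line remark that \fullref{theorem 6.1} (via \fullref{rat_vector}) together with \fullref{order} (via \fullref{order_vector}) yield the result, and you have simply filled in the routine details of that deduction. Your care with the distinctness of exponents (using $k_i>0$) and the index bookkeeping identifying $w_i=w_{n-i}$, $k_i=k_{n-i+1}$ with palindromicity is precisely what the paper leaves to the reader.
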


Homology polynomials  for ``flypes'' of links will also have a nice
formulation.  A vertical or horizontal flype is a move wherein a portion
of a link is rotated 180$^{\circ}$ about a vertical or horizontal axis
(see \fullref{fig:flypes}).    For more background on flypes see,
for example, Adams \cite{Adams}, Conway \cite{Conway} or Traynor
\cite{Generating}.  It is known that flypes produce topologically
equivalent links, but we will use the polynomials to show that these
flypes can produce nonequivalent Legendrian links.

\begin{figure}[ht]
\labellist\small
\pinlabel {(a)} at 70 286
\pinlabel {(b)} at 526 286
\pinlabel {(c)} at 70 54
\pinlabel {(d)} at 526 54
\pinlabel {F} at -16 425
\pinlabel {\reflectbox{F}} at 186 348
\pinlabel {F} at 374 379
\pinlabel {\rotatebox{180}{\reflectbox{F}}} at 675 393
\pinlabel {F} at -22 193
\pinlabel {\reflectbox{F}} at 200 123
\pinlabel {F} at 374 156
\pinlabel {\rotatebox{180}{\reflectbox{F}}} at 676 154
\endlabellist
\centerline{\includegraphics[width=3.8in]{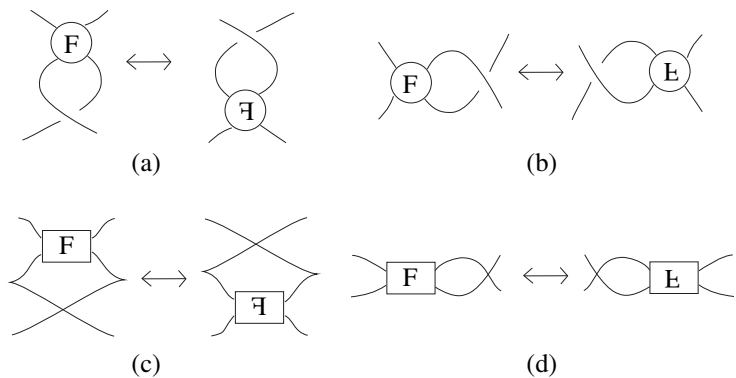}}
\caption{(a) a topological vertical flype, (b) a topological horizontal flype, (c) a Legendrian vertical flype, (d) a Legendrian horizontal flype}
\label{fig:flypes}
\end{figure}

Rational links give us many opportunities to apply flypes. As mentioned in
the Introduction, we will only consider horizontal flypes since vertical
flypes produce equivalent links.   Given the rational link 
$L=\left( 2w_{n},k_{n},2w_{n-1},\dots
,k_{1},2w_{0}\right) $, we may apply one or more horizontal flypes using the
horizontal crossings represented by the terms $2w_{0},2w_{1},\dots ,2w_{n-1}$
.  Let $\left( 2w_{n},k_{n},2w_{n-1}^{p_{n-1}},\dots
,k_{1},2w_{0}^{p_{0}}\right) $ represent $L$ after it has undergone $p_{i}$
horizontal flypes using the set of $2w_{i}$ horizontal crossings of $L$,
where $0\leq i\leq n-1$.  See, for example, \fullref{fig:flypes_ex},
where the boxed portion of the link is the part rotated in the flype. 

\begin{figure}[ht]
\centerline{		\includegraphics[width=4in]{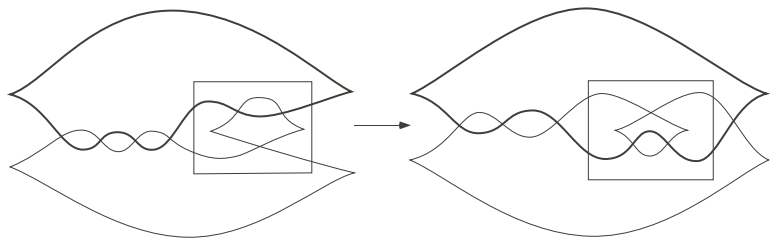}}
\caption{The rational link $\left( 2,1,4\right)$ undergoes a horizontal flype and becomes $\left( 2,1,4^{1}\right) $.  These links have different negative homology polynomials.}
        \label{fig:flypes_ex}
\end{figure}

\begin{theorem}
\label{flypesthm}Let $L=\left( 2w_{n},k_{n},2w_{n-1}^{p_{n-1}},\dots
,k_{1},2w_{0}^{p_{0}}\right) $.  For $j=0,1,\dots ,n-1$, let $\sigma \left(
j\right) =1+\sum_{i=0}^{j}p_{i}$ mod $2$.  Then 
\begin{equation*}
\Gamma ^{-}\left( \lambda \right) \left[ L\right] =\lambda ^{m}\cdot \Bigl[
w_{0}\lambda ^{0}+\sum_{i=1}^{n}w_{i}\lambda ^{[ \left( -1\right)
^{\sigma \left( 0\right)}k_{1}+\cdots +\left( -1\right) ^{\sigma \left(
i-1\right)}k_{i}]}\Bigr] \text{,}
\end{equation*}
where $m$ is chosen so that $\Gamma ^{-}$ has degree zero.
\end{theorem}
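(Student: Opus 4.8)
The plan is to mirror the proof of \fullref{theorem 6.1}, feeding the data of an appropriate difference function into \fullref{proposition 4.2}; the only genuinely new work is tracking how the horizontal flypes change the branch indices, hence the relative indices of the critical points.

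First I would isotope the flyped link $L=\left(2w_n,k_n,2w_{n-1}^{p_{n-1}},\dots,k_1,2w_0^{p_0}\right)$ to one admitting a difference function $\Delta\co\R\times\R^{1+n_1}\times\R^{1+n_0}\to\R$ with exactly $2(w_0+\dots+w_n)$ nondegenerate critical points, organized just as in the proof of \fullref{theorem 6.1}: after completing the upper branch of $\Lambda_1$ so that it has sufficiently positive slope on the left, each block of $2w_i$ horizontal crossings contributes $w_i$ critical points with a positive value $c_i^+$ together with $w_i$ critical points with a negative value $c_i^-$, and the values satisfy $c_0^-<c_1^-<\dots<c_n^-<0<c_n^+<\dots<c_1^+<c_0^+$. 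A horizontal flype changes neither the topological type of the link nor this crossing pattern---it only reflects and repositions pieces of the front---so such a configuration still exists and the ordering of the critical values is unaffected, which establishes hypothesis (1) of \fullref{proposition 4.2}.

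The core of the argument is hypothesis (2): for a suitable choice of initial branches, every critical point of value $c_i^+$ should have relative index $i_i+1$ and every critical point of value $c_i^-$ should have relative index $i_i$, where $i_0=0$ and $i_i=(-1)^{\sigma(0)}k_1+\dots+(-1)^{\sigma(i-1)}k_i$ for $i\ge 1$. Recall from \fullref{indices} that the relative index of a critical point associated to branches $B_1\subset\Lambda_1$, $B_0\subset\Lambda_0$ is $i_{Br}(B_1)-i_{Br}(B_0)+i_{\Gamma}(x_0)$, and that $i_{Br}$ increases by $1$ across each up-cusp and decreases by $1$ across each down-cusp. A horizontal flype rotates a portion of the front $180^{\circ}$ about a horizontal axis, exchanging up-cusps with down-cusps inside that portion; consequently, tracing the path used to define branch indices through the recursively built front of \fullref{fig:rational_link} (compare \fullref{fig:flypes_ex}), each horizontal block $2w_j$ with $0\le j\le m-1$ that has been flyped an odd number of times reverses the sign with which the vertical block $k_m$ contributes to the branch-index difference. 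Since in the unflyped link ($p_j\equiv 0$) the block $k_m$ already contributes $-k_m$ (this is the computation in the proof of \fullref{theorem 6.1}), the net sign attached to $k_m$ is $(-1)\cdot\prod_{j=0}^{m-1}(-1)^{p_j}=(-1)^{1+p_0+\dots+p_{m-1}}=(-1)^{\sigma(m-1)}$, which yields the claimed values $i_i$; the $+1$ shift separating the $c_i^+$ critical points from the $c_i^-$ ones arises exactly as in the proof of \fullref{theorem 6.1}, since the local picture at each crossing used to compute $i_{\Gamma}$ is preserved by the flype.

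It remains to verify hypothesis (3): choosing noncritical values $a_j,b_j$ with $c_{j-1}^-<a_j<c_j^-<0<c_j^+<b_j<c_{j-1}^+$ for $1\le j\le n$, I would repeat the deformation argument from the proof of \fullref{theorem 6.1}---deforming $\Delta$ through difference functions $\Delta_t$ (this is not a link isotopy) that keep $a_j,b_j$ noncritical while erasing the $2w_n+\dots+2w_j$ crossings lying above the relevant level---so that \fullref{lemma 3.10} gives $H_{\ast}\left(\Delta^{b_j},\Delta^{a_j}\right)\simeq H_{\ast}\left(\Delta_1^{b_j},\Delta_1^{a_j}\right)=0$ for all $\ast\in\Z$. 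With hypotheses (1)--(3) in hand, \fullref{proposition 4.2} immediately yields $\Gamma^-(\lambda)[L]=\lambda^h\sum_{i=0}^n w_i\lambda^{i_i}$, which is the desired formula after renaming $h$ as $m$. I expect the combinatorial index bookkeeping of the third paragraph to be the main obstacle: one must determine precisely how nested horizontal flypes interact along the recursively built front and confirm that the resulting sign on each $k_m$ depends only on the parity of $p_0+\dots+p_{m-1}$, with everything else being essentially a transcription of the proof of \fullref{theorem 6.1}.
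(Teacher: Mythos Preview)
Your proposal is correct and follows essentially the same route as the paper: isotope to a front whose difference function has $2(w_0+\dots+w_n)$ nondegenerate critical points with the nested critical-value structure, establish the branch-index formula $i_{Br}(W_1^i)-i_{Br}(W_0^i)=(-1)^{\sigma(0)}k_1+\dots+(-1)^{\sigma(i-1)}k_i$, verify the vanishing of $H_\ast(\Delta^{b_j},\Delta^{a_j})$ via the same deformation argument, and then invoke \fullref{proposition 4.2}. The only organizational difference is that the paper packages the existence of the nice difference function together with the branch-index formula as a single statement proved by induction on the length of the tangle vector (with \fullref{theorem 6.1} as the base case), whereas you argue existence directly and then give a global combinatorial reading of how flypes swap up- and down-cusps; your closing caveat about nested flypes is precisely the bookkeeping that the paper's induction absorbs.
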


\begin{remark}  To construct the vector corresponding to the polynomial
for $L=\left( 2w_{n},k_{n},2w_{n-1}^{p_{n-1}},\dots
,k_{1},2w_{0}^{p_{0}}\right) $, one starts by writing a $w_0$ and then one
goes backward (if $p_0$ is even) or forward (if $p_0$ is odd) $k_1$ places 
to write $w_1$.  Then one repeats this procedure continuing in the same
forward/backward direction as in the previous step 
 if $p_1$ is even while changing direction if $p_1$ is odd, and
adding the entries if one arrives at a position in the vector already visited.
For example,  if $L=\left( 2,1,4\right) $ then the negative homology vector
is $(1,2)$ (or, equivalently, $\Gamma ^{-}\left( \lambda \right) \left[ L
\right] =2\lambda ^{0}+\lambda ^{-1}$); after applying 
 one horizontal
flype, we have the link $L^{\prime}=\left( 2,1,4^{1}\right) $ which
has negative homology vector $(2,1)$ (equivalently
$\Gamma ^{-}\left( \lambda \right) \left[ L^{\prime}\right]
=\lambda ^{0}+2\lambda ^{-1}$).   So even though $L$ and $L^\prime$
are topologically equivalent and have the same classical Legendrian invariants,
they are distinct Legendrian links; see \fullref{fig:flypes_ex}.
\end{remark}

\begin{proof}
This proof follows the proof of \cite[Theorem~6.2]{Generating}.  We will
give a sketch of the proof here.  The key is to show the existence of a
difference function $\Delta $ for $L$ such that $\Delta $ has $2\left(
w_{0}+w_{1}+\dots +w_{n}\right) $ nondegenerate critical points, and for $0\leq i\leq n$ each of the $2w_{i}$ critical points correspond to a pair of points $\left( \left( x_{0},y_{0},z_{1}\right) ,\left( x_{0},y_{0},z_{0}\right)
\right) \in \Lambda _{1}\times \Lambda _{0}$ on branches $W_{1}^{i}\subset
\Lambda _{1}$, $W_{0}^{i}\subset \Lambda _{0}$ where

\begin{enumerate}
\item the distance function $d_{i}\co  \R \rightarrow \R$ given by $d_{i}\left( x\right) =\left| z_{1}\left( x\right) -z_{0}\left( x\right)
\right| $, where $\bigl( x,y_{j}^{i}\left( x\right) ,z_{j}^{i}\left(
x\right) \bigr) \in \Lambda _{j}$ is a point on branch $W_{j}^{i}$ for $j=0,1$, achieves a relative maximum at $x_{0}$ for each $i$ and

\item 
$i_{Br}\bigl( W_{1}^{i}\bigr) -i_{Br}\bigl( W_{0}^{i}\bigr) =\left\{ 
\begin{array}{ll}
0 & \text{if }i=0\text{,} \\ 
\left( -1\right) ^{\sigma \left( 0\right)}k_{1}+\dots +\left( -1\right)
^{\sigma \left( i-1\right)}k_{i} & \text{if }1\leq i\leq n\text{.}
\end{array}
\right.$
\end{enumerate}

The existence of such a function is proven by induction, using the proof of
\fullref{theorem 6.1} to prove the base case.
Once we've proved the existence of a suitable difference function $\Delta $,
the rest of the proof follows using the same reasoning as in the proof of
\fullref{theorem 6.1}.\hfill
\end{proof}

\begin{theorem}
\label{Ljkpolys}Let $L_{j,k}$ be a twist link as described   in \fullref{fig:Ljk}.  Then
$$
\Gamma ^{-}\left( \lambda \right) \left[ L_{j,k}\right] = \lambda
^{0}+\lambda ^{-\left| j-k\right|}.
$$
\end{theorem}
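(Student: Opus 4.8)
The plan is to follow the template of the proof of \fullref{theorem 6.1}: build a convenient difference function for $L_{j,k}$ whose critical data match the hypotheses of \fullref{proposition 4.2}, and then read off $\Gamma^-$. By the invariance of the homology polynomials we may work with any Legendrian representative of $L_{j,k}$, and since reflecting the front of \fullref{fig:Ljk} gives $L_{j,k}\simeq L_{k,j}$ we may assume $j\ge k$.

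First I would isotope $L_{j,k}=(\Lambda_1,\Lambda_0)$ so that it has a difference function $\Delta\co\R\times\R^{1+n_1}\times\R^{1+n_0}\to\R$ with exactly four nondegenerate critical points, two associated to each of the two clasps. Concretely, arrange the front so that the two clasps occur at very different heights and so that each clasp region contributes exactly one pair $((x_0,y_0,z_1),(x_0,y_0,z_0))\in\Lambda_1\times\Lambda_0$ of equal slope with $z_1-z_0>0$ at which $|z_1-z_0|$ is maximal, together with the corresponding relative minimum of $|z_1-z_0|$ on the other side of the clasp --- exactly as a pair of ``horizontal crossings'' behaves in \fullref{fig:rat_calc}. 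Thus each clasp yields one critical point with positive critical value $c_i^+$ and one with negative value $c_i^-$, whose relative indices will differ by $1$. The twisted middle regions (with $j$, respectively $k$, crossings of a single component with itself) do not by themselves create critical points of $\Delta$, since those require one point on \emph{each} component; drawing the twist regions so that their branches have slopes steep enough never to be tangent to the other component keeps it so. After a global vertical adjustment and a relabeling of the clasps, the four critical values can be made to satisfy $a_0<c_0^-<a_1<c_1^-<0<c_1^+<b_1<c_0^+<b_0$ for suitable noncritical $a_i,b_i$, which is the ordering required by \fullref{proposition 4.2} with $n=1$ and $w_0=w_1=1$.

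The heart of the matter is computing the relative indices via \fullref{indices}. For a fixed choice of initial branches on $\Lambda_1$ and $\Lambda_0$, each of the four critical points has relative index $i_{Br}(B_1)-i_{Br}(B_0)+i_\Gamma(x_0)$, and at a single clasp the $c_i^+$ point and the $c_i^-$ point have relative indices differing by exactly $1$ (the $c_i^+$ one being larger), just as for the $i=0$ group in the proof of \fullref{theorem 6.1}. It then remains to compare the branch-index differences $i_{Br}(B_1)-i_{Br}(B_0)$ at the two clasps: traversing $\Lambda_1$ from the branch meeting the first clasp to the branch meeting the second passes through the $j$-twist region and changes $i_{Br}(B_1)$ by an amount determined by $j$, traversing $\Lambda_0$ similarly changes $i_{Br}(B_0)$ by an amount determined by $k$, and tracking up-cusps versus down-cusps shows the net change of $i_{Br}(B_1)-i_{Br}(B_0)$ between the two clasps is $\pm(j-k)$. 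Hence the two negative critical points have relative indices $i_0,i_1$ with $|i_0-i_1|=|j-k|$, and \fullref{proposition 4.2} gives $\Gamma^-(\lambda)[L_{j,k}]=\lambda^h(\lambda^{i_0}+\lambda^{i_1})$. I expect this cusp-counting and sign bookkeeping through the clasps and twist regions to be the main obstacle; it is parallel to the computations of \cite[Section~6]{Generating} but must be redone for this configuration. (When $j=k$ the two relative indices coincide, and the formula reads $2\lambda^0$; the argument is otherwise unchanged.)

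Finally I would verify hypothesis (3) of \fullref{proposition 4.2}, that is $H_*(\Delta^{b_1},\Delta^{a_1})=0$: as in the deformation arguments in the proofs of \fullref{proposition 3.12} and \fullref{theorem 6.1}, deform the front so as to pull apart and cancel the two crossings of the clasp producing $c_1^\pm$, obtaining a one-parameter family $\Delta_t$ of difference functions with $a_1,b_1$ noncritical for all $t$ and with $\Delta_1$ having no critical values in $[a_1,b_1]$; then \fullref{lemma 3.10} yields $H_*(\Delta^{b_1},\Delta^{a_1})\simeq H_*(\Delta_1^{b_1},\Delta_1^{a_1})=0$. With hypotheses (1), (2), and (3) in hand, \fullref{proposition 4.2} gives $\Gamma^-(\lambda)[L_{j,k}]=\lambda^h(\lambda^{i_0}+\lambda^{i_1})$ with $|i_0-i_1|=|j-k|$, and the degree-zero normalization turns this into $\lambda^0+\lambda^{-|j-k|}$, as desired.
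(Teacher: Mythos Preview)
Your proposal is correct and follows essentially the same approach as the paper: isotope $L_{j,k}$ so that the difference function has exactly four nondegenerate critical points coming from the two clasps, with critical values ordered as in \fullref{proposition 4.2} for $n=1$ and $w_0=w_1=1$; compute the relative indices via the branch-index bookkeeping of \fullref{indices} to find that the two negative critical points have indices differing by $|j-k|$; verify hypothesis~(3) by pulling apart the inner clasp and invoking \fullref{lemma 3.10}; then read off the polynomial. The paper simply asserts the relative indices $1,0$ and $j-k+1,j-k$ by reference to a figure (for $L_{2,3}$) rather than spelling out the cusp-counting through the twist regions as you do, and it also remarks that $H_*(\Delta^{b_0},\Delta^{a_0})=H_*(\Delta)=0$ by \fullref{proposition 3.12}, but otherwise the arguments match.
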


\begin{remark} When $j \neq k$, the homology vector of $L_{j,k}$ is
$(1, 0, \dots, 0, 1)$ where there are $|j-k|-1$ zeros, while if $j=k$, the
homology vector is $(2)$.  
\end{remark}

\begin{proof}
It is possible to isotope $L_{j,k}$ so that it has a difference function $\Delta \co  \R\times \R^{1+n_1} \times \R^{1+n_0}
\rightarrow \R$ with four
nondegenerate critical points, as follows.  The critical values of the
critical points are $c_{1}^{+},c_{1}^{-}$ and $c_{0}^{+}, c_{0}^{-}$ with
the relative indices of the critical points (with an appropriate labeling of
branch indices) $1,0$ and $j-k+1, j-k$, 
respectively.  Moreover the critical values satisfy the
inequalities
\begin{equation*}
c_{0}^{-}<c_{1}^{-}<0<c_{1}^{+}<c_{0}^{+}.
\end{equation*}
\fullref{fig:twlinks} illustrates one such construction for the link $L_{2,3}$.

\begin{figure}[ht]
\labellist\small
\pinlabel {$c_0^+$} [bl] at 283 421
\pinlabel {$c_0^-$} [t] at 289 384
\pinlabel {$c_1^+$} [b] at 301 262
\pinlabel {$c_1^-$} [tl] at 295 217
\pinlabel {$0$} [t] at 80 273
\pinlabel {$0$} [tl] at 471 256
\endlabellist
\centerline{\includegraphics[height=2.5in]{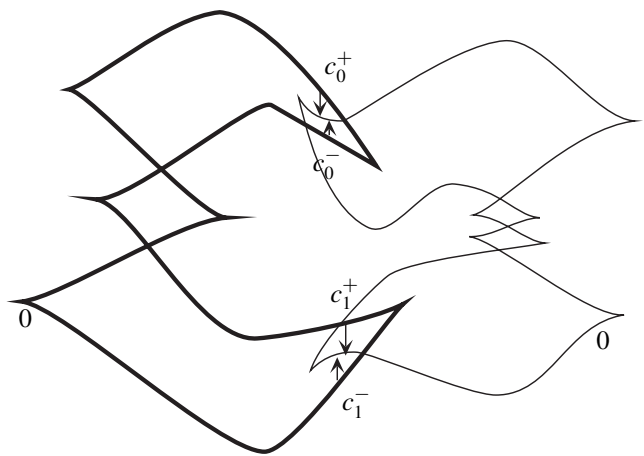}}
\caption{The Legendrian link $L_{2,3}$ can be positioned so that 
its difference function $\Delta $ has
four nondegenerate critical points, represented by pairs of points $\left(
\left( x_{0},y_{0},z_{1}\right) ,\left( x_{0},y_{0},z_{0}\right) \right) \in
\Lambda _{1}\times \Lambda _{0}$.  In this figure, each pair is joined by a
line segment and the label ($c_{0}^{\pm}$ or $c_{1}^{\pm}$) next to each
pair indicates the critical value.  An initializing choice of  branch indices is indicated.}
        \label{fig:twlinks}
\end{figure}

Choose noncritical values $a_{i},b_{i}$ of $\Delta $, $i=0,1$, such that 
\begin{equation*}
a_{0}<c_{0}^{-}<a_{1}<c_{1}^{-}<0<c_{1}^{+}<b_{1}<c_{0}^{+}<b_{0}\text{.}
\end{equation*}
As in \fullref{theorem 6.1}, we may deform $\Delta =\Delta _{0}$ (by
pulling apart the bottom portions of the knots) so that $a_{1},b_{1}$ are
never critical values of $\Delta _{t}$, and $\Delta _{1}$ has no critical
values in $\left[ a_{1},b_{1}\right] $.  Again using \fullref{lemma 3.10},
we conclude that $H_{\ast}\bigl( \Delta _{0}^{b_{1}},\Delta
_{0}^{a_{1}}\bigr) \simeq H_{\ast}\bigl( \Delta _{1}^{b_{1}},\Delta
_{1}^{a_{1}}\bigr) =0$ for all $\ast \in \Z$. Also notice that
 $H_{\ast}\bigl( \Delta
^{b_{0}},\Delta ^{a_{0}}\bigr) =H_{\ast}\bigl( \Delta \bigr) =0$ by \fullref{proposition 3.12}. 
 So by
\fullref{proposition 4.2}, 
$$
\Gamma ^{-}(\lambda) [ L_{j,k}] = \bigl( \lambda
^{0}+\lambda ^{\left( j-k\right)}\bigr) \cdot \lambda ^{h},
$$
where $h$ is chosen so that $\Gamma ^{-}( \lambda ) [L_{j,k}]$
has degree zero.  The result follows. 
\end{proof}

It is easy to see that $j+k$ determines the topological type of the link
$L_{j,k}$.  \fullref{Ljkpolys} then implies

\begin{corollary}
\label{Ljktype}Let $m\geq 2$.  If $m$ is even, the following links are
topologically but not Legendrianly equivalent:  $L_{1,m-1},L_{2,m-2},\dots
,L_{\frac{m}{2},\frac{m}{2}}$.  If $m$ is odd, the following links are not
Legendrianly equivalent:  $L_{1,m-1},L_{2,m-2},\dots ,$ $L_{\frac{m-1}{2},
\frac{m+1}{2}}$.
\end{corollary}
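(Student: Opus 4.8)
The plan is to combine the computation in \fullref{Ljkpolys} with the invariance of the normalized polynomial $\Gamma^{-}(\lambda)[\,\cdot\,]$, reducing the statement to an elementary arithmetic observation. First I would recall that, as noted immediately before the corollary, the topological link type of $L_{j,k}$ is determined by $j+k$; hence for a fixed $m$ every link in the displayed list has $j+k=m$, so all of them are topologically equivalent. This settles the ``topologically equivalent'' half of the statement in the even case.

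Next I would invoke the theorem asserting that $\Gamma^{\pm}(\lambda)[L]$ are well-defined Legendrian invariants of a link of maximal unknots, so that two such links with different $\Gamma^{-}$ polynomials cannot be Legendrian equivalent. By \fullref{Ljkpolys} we have $\Gamma^{-}(\lambda)[L_{j,m-j}]=\lambda^{0}+\lambda^{-|2j-m|}$, and since this Laurent polynomial already has degree $0$ there is no normalization ambiguity: two such polynomials $\lambda^{0}+\lambda^{-a}$ and $\lambda^{0}+\lambda^{-b}$ with $a,b\ge 0$ coincide if and only if $a=b$.

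It then remains to check that the exponent $|2j-m|$ takes pairwise distinct values as $j$ runs over the relevant range. For $m$ even and $1\le j\le m/2$ one gets $|2j-m|\in\{m-2,m-4,\dots,2,0\}$, a set of $m/2$ distinct even integers; for $m$ odd and $1\le j\le (m-1)/2$ one gets $|2j-m|\in\{m-2,m-4,\dots,3,1\}$, a set of $(m-1)/2$ distinct odd integers. In either case the polynomials $\Gamma^{-}(\lambda)[L_{j,m-j}]$ are pairwise distinct, so the links in the list are pairwise inequivalent as Legendrian links, which completes the proof.

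There is no genuine obstacle here: the content is entirely carried by \fullref{Ljkpolys} and the invariance of $\Gamma^{-}$. The only point requiring a moment of care is to ensure the comparison of polynomials is legitimate --- that is, that we are comparing genuine invariants rather than representatives defined only up to a power of $\lambda$ --- and this is automatic because each $\Gamma^{-}(\lambda)[L_{j,m-j}]$ is already normalized to have degree zero.
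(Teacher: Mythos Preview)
Your proposal is correct and follows exactly the approach the paper intends: the paper offers no proof beyond the sentence ``It is easy to see that $j+k$ determines the topological type of the link $L_{j,k}$.  \fullref{Ljkpolys} then implies'' the corollary, and you have simply spelled out the arithmetic that this sentence leaves implicit.
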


\section{Comparison to other results}

In this section, we first compare the results of the homology polynomial
calculations with  various decomposition number invariants
and then  with a polynomial invariant that comes from a link's
differential,
graded algebra (DGA).  

\subsection{Decomposition invariants}  There is a close relationship between
generating families and the decomposition invariants of Chekanov \cite{Chekanov2}, and 
Chekanov and Pushkar \cite{Chekanov+Pushkar}.  Briefly, an admissible 
decomposition of a front projection of a Legendrian link is a choice 
of the crossings in the front projection so that the front obtained by resolving
all the crossings is a union of two cusped ``eyes" that satisfy a set of conditions.
For a precise definition, see \cite{Chekanov2}. In fact, as pointed out
by Chekanov and Pushkar in \cite[Section 12]{Chekanov+Pushkar}, the definition of an 
admissible decomposition  
is obtained by axiomatizing combinatorial structures
arising on the front of a Legendrian submanifold defined by a generating family.

Chekanov showed that there are a number of decomposition
 invariants that can be associated to a Legendrian
link $L$.  For example, one can count the total number of admissible decompositions of
any front of $L$.
 Also, one can count the total number
of Maslov (also known as graded) admissible decompositions:  one can associate a Maslov
index to each branch of the front projection and then restrict to decompositions arising from
resolving crossings of branches having the same Maslov index. One can also look at 
associated decomposition and Maslov decomposition polynomials:
the decomposition polynomial is 
$D\left(L_{j,k}\right) = \sum_\rho z^{j(\rho)}$, where the sum is taken over all allowable decompositions $\rho$
of a front of $L_{j,k}$, and 
$j(\rho)$ is the number of left cusps  
in the decomposition
minus the number of  crossings resolved in the decomposition.
 The Maslov indices for the branches depend on a choice of initial value for a
 branch of each component, and different choices for the
 Maslov branch indices lead to different Maslov crossings for a $2$ or more component link.  
Hence the
Maslov decomposition number of a multi-component link is a 
 set of decomposition numbers and the Maslov decomposition polynomial
is a set of polynomials.

In our explorations, all these decomposition invariants are quite different
than the invariants we have found through our use of generating families.  As an
illustration, we will just mention the decomposition invariants for the twist links.

\begin{proposition}  Let $L_{j,k}$ be the twist link; see \fullref{fig:Ljk}.  Then 
\begin{enumerate}
\item for any $j, k$, the decomposition number of $L_{j, k}$ is $4$, and 
 the decomposition polynomial of $L_{j, k}$ is $z^{-2} + 2 z^0 + z^2$;
\item if $j=k$, the set of Maslov decomposition numbers of $L_{j, k}$  is $\{1, 4\}$, 
and the set of Maslov decomposition polynomials  equals $\{ z^{2}, z^{-2} + 2z^0 + z^2 \}$;
if $j \neq k$, the set of Maslov decomposition numbers of $L_{j, k}$  is $\{1, 2\}$, and
 the set of Maslov decomposition polynomials equals $\{ z^{2}, z^{0} + z^{2}  \}$.
\end{enumerate}
\end{proposition}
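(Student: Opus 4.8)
The plan is to compute these invariants directly from a front diagram, using the fact (Chekanov \cite{Chekanov2}; Chekanov and Pushkar \cite{Chekanov+Pushkar}) that the decomposition number, the decomposition polynomial, and their Maslov refinements are Legendrian invariants, so that it suffices to analyze one convenient front $F$ of $L_{j,k}$ for each pair $(j,k)$. Following \fullref{fig:Ljk} I would fix such an $F$: it consists of two ``clasp regions'', in each of which a strand of $\Lambda_1$ and a strand of $\Lambda_0$ cross twice, together with two ``twist regions'', one realizing the $j$ self-crossings of $\Lambda_1$ and one the $k$ self-crossings of $\Lambda_0$. I would record the crossings of $F$ --- the four clasp crossings and the $j+k$ twist crossings --- and the number $c_L$ of left cusps of $F$, and note that for any admissible decomposition $\rho$, with $S_\rho$ the set of crossings resolved in the cusp-creating way, one has $j(\rho)=c_L-|S_\rho|$; the whole problem then reduces to enumerating the admissible $\rho$.

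For part (1) the key is a local analysis of the ``no-interleaving'' condition in each region. Inside a twist region the two arcs that cross lie on a single component, and I expect the eye condition to force every crossing there to be resolved in the cusp-creating way; moreover the number of left cusps of $F$ lying in the $j$--twist (respectively $k$--twist) region is $j$ (respectively $k$). Hence each twist region contributes equally to the two terms of $j(\rho)=c_L-|S_\rho|$ and so does not affect its value --- this is the source of the $j$-- and $k$--independence of the answer. At each clasp region the condition then leaves exactly two compatible choices (resolve both of its crossings in the cusp-creating way, or neither), giving exactly four globally admissible decompositions, in which $0$, $2$, $2$, and $4$ clasp crossings are resolved that way. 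Computing $c_L-|S_\rho|$ for these four yields the values $2,0,0,-2$, so $D(L_{j,k})=z^{-2}+2z^{0}+z^{2}$ and the decomposition number of $L_{j,k}$ is $4$.

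For part (2) I would compute the Maslov branch indices of $F$. After a choice of initial branch and initial index on each of $\Lambda_1$ and $\Lambda_0$, the index of every branch is determined by the rule that it changes by $\pm1$ at each cusp; a crossing is graded (Maslov) precisely when its two branches have equal index, and a Maslov decomposition is one in which only graded crossings are resolved in the cusp-creating way. Gradedness of the intra-component twist crossings depends only on the internal cusp pattern, not on the choices, and I would verify that all of them are graded in $F$; gradedness of a clasp crossing, on the other hand, is governed by the difference $\delta_i=\mu_1^{(i)}-\mu_0^{(i)}$ of the Maslov indices of its two branches, which is shifted by a fixed integer when the initial index of $\Lambda_0$ is changed. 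The crucial point is that the four numbers $\delta_i$ all coincide when $j=k$ but split into two distinct pairs (one per clasp) when $j\neq k$, because passing from one clasp to the other along $\Lambda_1$ shifts the Maslov index by an amount determined by the $j$--twist and along $\Lambda_0$ by an amount determined by the $k$--twist, and these amounts agree exactly when $j=k$. Consequently, over the relevant choices of initial index the number of graded clasp crossings runs through $\{0,4\}$ when $j=k$ and through $\{0,2\}$ when $j\neq k$. Combining this with the four decompositions found in part (1) --- the one that resolves no clasp crossing is always Maslov, the two that resolve exactly one clasp become Maslov once those two clasp crossings are graded, and the one that resolves both clasps becomes Maslov once all four clasp crossings are graded --- gives Maslov decomposition numbers $\{1,4\}$ and Maslov polynomials $\{z^{2},\,z^{-2}+2z^{0}+z^{2}\}$ when $j=k$, and $\{1,2\}$ and $\{z^{2},\,z^{0}+z^{2}\}$ when $j\neq k$.

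The step I expect to be the main obstacle is the twist-region bookkeeping in both parts: verifying that a stack of like-signed self-crossings really is resolved rigidly in any admissible decomposition, so that no partial resolution there extends to a global one, and then tracking the Maslov indices of the two arcs running through each twist region precisely enough that the dichotomy between $j=k$ and $j\neq k$ --- rather than some finer dependence on $j$ and $k$ --- is what controls which clasp crossings can be graded. I would treat the first by enumerating the local resolution patterns of a twist region of $t$ crossings and eliminating all but the rigid one for each adjacent clasp configuration, and the second by a careful signed cusp count around each component; the clasp regions, having only two crossings apiece, I would check directly.
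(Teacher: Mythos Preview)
Your proposal is correct and follows essentially the same approach as the paper: work with the front of \fullref{fig:Ljk}, observe that all $j+k$ self-strand (twist) crossings must be resolved in any admissible decomposition while each clasp admits an all-or-nothing choice, giving four decompositions with $j(\rho)\in\{2,0,0,-2\}$; then for the Maslov refinement note that the self-strand crossings are always graded and that the inter-strand (clasp) crossings are graded in the pattern $\{0,4\}$ when $j=k$ and $\{0,2\}$ when $j\neq k$, yielding the stated sets. Your explanation via the differences $\delta_i$ of Maslov indices is a bit more explicit than the paper's, which simply asserts the labeling cases, but the argument is the same.
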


\begin{proof}
First let us examine the non-Maslov decomposition numbers.  Consider the
front projection of $L_{j,k}$ as given by \fullref{fig:Ljk}.  For any $j, k$,
 there are $4$ allowable decompositions: one must
resolve all the $j+k$ self-strand crossings, and then one has the choice to 
resolve none of the other crossings, the $2$ uppermost crossings, the $2$ lowermost crossings, or both
the $2$ uppermost and $2$ lowermost crossings.  
  For the particular front projection of $L_{j,k}$ as given by \fullref{fig:Ljk},  the number of left cusps is given by $j + k + 2$, and 
 thus we see that
\begin{align*}
D(L_{j,k}) &=  z^{(j+k+2) - (j+k+0)} + 2 z^{(j+k+2) - (j+k+2)} + z^{(j+k+2)- (j+k+4)} \\
&= 
z^{2} + 2 z^0 + z^{-2}.
\end{align*}
Now consider the Maslov versions.  All the self-strand crossings are Maslov.  When $j=k$, either none of the inter-strand crossings are Maslov, or all the crossings are Maslov.  With the first type of labels, there is only
one admissible decomposition.  With the second type of labels, there are the same 
$4$ as were seen above.  Thus when $j=k$, the set of Maslov decompositions
is $\{ 1, 4 \}$ and the Maslov decomposition polynomials are 
$$ D^\mu(L_{j,k}) = \{  z^{2},  z^{2} + 2z^0  + z^{-2} \}.$$
When  $j\neq k$, there are essentially three ways to
label the branches:  either none of the inter-strand crossings are
Maslov, or only the top two inter-strand crossings are Maslov, or only
the bottom two inter-strand crossings are Maslov.  With the first
type of labels, we have one admissible decomposition.  With either the
second or third type of labels, we have two admissible decompositions. 
Hence the set of Maslov decomposition numbers is $\left\{ 1,2\right\} $.  The
set of associated Maslov decomposition polynomials is
$$ D^\mu(L_{j,k}) = \{  z^{2}, z^{2} + z^{0}\}.$$ 
See \fullref{fig:decomps}.
\begin{figure}[ht]
\labellist\small
\pinlabel {labelings} [t] at 175 750
\pinlabel {(a)} [l] at -10 655
\pinlabel {$4$} [br] at 68 671
\pinlabel {$3$} [tr] at 68 618
\pinlabel {$3$} [br] at 97 574
\pinlabel {$2$} [tr] at 97 563
\pinlabel {$2$} [br] at 68 515
\pinlabel {$1$} [tr] at 61 473
\pinlabel {$2$} [bl] at 287 663
\pinlabel {$1$} [tl] at 287 616
\pinlabel {$1$} [bl] at 287 516
\pinlabel {$0$} [tl] at 287 473
\pinlabel {(b)} [l] at -10 365
\pinlabel {$4$} [br] at 68 379
\pinlabel {$3$} [tr] at 68 326
\pinlabel {$3$} [br] at 97 282
\pinlabel {$2$} [tr] at 97 271
\pinlabel {$2$} [br] at 68 223
\pinlabel {$1$} [tr] at 61 181
\pinlabel {$3$} [bl] at 287 371
\pinlabel {$2$} [tl] at 287 324
\pinlabel {$2$} [bl] at 287 224
\pinlabel {$1$} [tl] at 287 181
\pinlabel {admissible decompositions} [t] at 475 750
\pinlabel {(1)} [r] at 375 650
\pinlabel {(1)} [r] at 375 380
\pinlabel {(2)} [r] at 375 170
\endlabellist
\centerline{\includegraphics[height=3.5in]{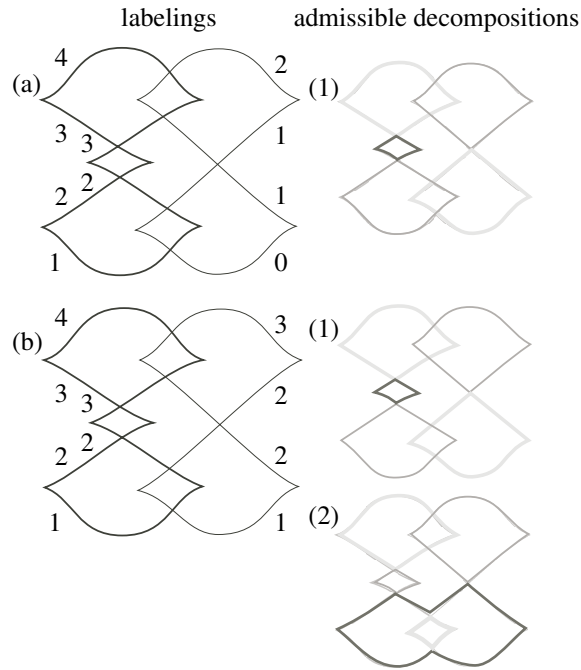}}
\caption{In (a), the branches of $L_{2,1}$ are labeled so that no
inter-strand crossings are Maslov.  The unique Maslov admissible decomposition
is shown.  In (b), the branches are labeled so that only the bottom set of
inter-strand crossings are Maslov.  The two Maslov admissible decompositions
are shown.  Note that a third labeling is possible, in particular one in
which only the top set of inter-strand crossings are Maslov.  The two
admissible decompositions in that case are symmetric to those in the second
labeling.}
        \label{fig:decomps}
\end{figure}
\end{proof}

In particular, we see that these decomposition invariants cannot distinguish, for example, 
the topologically equivalent Legendrian links $L_{1,4}$ and $L_{2,3}$ which can be distinguished by the generating family polynomials.  It would be interesting to see if the decomposition invariants
can be further refined so that they do capture the same information as the generating
family polynomials.

\subsection{DGA invariants} First we will briefly outline the process by
which a differential, graded
algebra (DGA) is associated to a link, and a polynomial is in turn associated
to the algebra.  This set-up is analogous to the work of Ng \cite{Ng1}.

The first step is to associate an algebra to the front projection of a link.
 The algebra $\mathcal{A}$ is given to be the free, unital, associative
algebra with coefficients in $\Z/2\Z$ generated by the
crossings and right cusps of the front projection of the link.  Each
generator is assigned a degree, which extends to all of $\mathcal{A}$ by
multiplicativity.  The degree of each cusp is $1$, while the degree of each
crossing is calculated using a simple procedure described in \cite[Section 2.2]{Ng1}.
 The degree depends upon a choice of marked points on
each component of the link:  a change in the choice of marked points may
shift some of the degrees by an integer.  In Ng and Traynor \cite{Ng-Traynor}, there was a
canonical choice for the marked points, but not for the links in $\R^3$ that
we are considering. 

The second step is to calculate a map $\partial \co \mathcal{A}\rightarrow 
\mathcal{A}$, called a differential, having the properties that $\partial
^{2}=0$ and $\partial $ lowers degree by one.
 $\partial $ is defined over $\Z/2\Z$ on the generators of $\mathcal{A}$ by counting certain immersed disks in the front projection. 
For details, see \cite[Section 2.2]{Ng1}.

In order to use the DGA to distinguish links, we use additional structure
properties described in \cite[2.4]{Ng-Traynor}, which is essentially 
Mishachev's relative homotopy splitting from \cite{Mishachev}. 
We break up $\mathcal{A}$ into components $\mathcal{A}^{1,1},
\mathcal{A}^{0,0},\mathcal{A}^{1,0},$ and $\mathcal{A}^{0,1}$, where the
idea is to separate the generators based on how strands from the link
components $\Lambda _{1}$ and $\Lambda _{0}$ contribute to each generator. 
For example, $\mathcal{A}^{1,0}$ is the module over $\Z/2\Z$
generated by words of the form $a_{i_{1}}\cdots a_{i_{m}}$ where the
overstrand of $a_{i_{1}}$ is in $\Lambda _{1}$ and the understrand of $a_{i_{m}}$ is in $\Lambda _{0}$, and the understrand of $a_{i_{p}}$ is in
the same link component as the overstrand of $a_{i_{p+1}}$.  $\mathcal{A}^{1,1}$ ($\mathcal{A}^{0,0}$) is generated by analogous words together with
an indeterminate $e_{1}$ ($e_{0}$).  The map $\partial $ is altered to be
defined on $\mathcal{A}^{i,j}$, and we call the new differential $\partial
^{\prime}$.  On $\mathcal{A}^{1,0}$ and $\mathcal{A}^{0,1}$, $\partial
^{\prime}$ agrees with $\partial $, while on $\mathcal{A}^{1,1}$ ($\mathcal{A}^{0,0}$) there is the slight modification that any $1$ term is replaced by 
$e_{1}$ ($e_{0}$).  Thus we have a link DGA given by $\left( \mathcal{A}^{\ast ,\ast},\partial ^{\prime}\right) $, and explicitly defined in 
\cite[Definition 2.13]{Ng-Traynor}.

Once we have a link DGA for a link $L$, we may use it to calculate a
polynomial invariant of $L$, up to equivalence.  A full description of this
process is given in \cite[Section 2.2]{Ng1} or  \cite[2.5]{Ng-Traynor}.  It involves defining an
augmentation $\epsilon $, which is a map from $\mathcal{A}$ to $\Z/2
\Z$.  Using this augmentation, we calculate $\partial _{\epsilon
}^{\prime}\co V_{k}^{i,j}\rightarrow V_{k-1}^{i,j}$, where $V_{k}^{i,j}$ is
the graded vector space over $\Z/2\Z$ generated by the
generators of $\mathcal{A}^{i,j}$ of degree $k$.  In particular, $V_{k}^{1,0}$ is generated by degree $k$ crossings with an overstrand in $\Lambda _{1}$ and an understrand in $\Lambda _{0}$.  Define 
\begin{equation*}
\beta _{k}^{1,0}\left( \epsilon \right) =\text{dim}\frac{\ker \partial
_{\epsilon}^{\prime}\co V_{k}^{1,0}\rightarrow V_{k-1}^{1,0}}{\text{im}
\partial _{\epsilon}^{\prime}\co V_{k+1}^{1,0}\rightarrow V_{k}^{1,0}},
\end{equation*}
and define 
\begin{equation*}
\chi _{\epsilon}^{1,0}\left( \lambda \right) \left[ L\right] =\sum_{k}\beta
_{k}^{1,0}\left( \epsilon \right) \cdot \lambda ^{k}\text{.}
\end{equation*}
Then $\chi _{\epsilon}^{1,0}$ is one of the \emph{split Poincar\'{e}--Chekanov polynomials of} $L$ \emph{with respect to} $\epsilon $. 
Polynomials $\chi _{\epsilon}^{1,1}$, $\chi _{\epsilon}^{0,0}$, and $\chi
_{\epsilon}^{0,1}$ can be defined similarly.  The polynomials
$\chi _{\epsilon}^{1,0}$ and $\chi_{\epsilon}^{0,1}$
depend on the grading of the link; in particular, changing the choice of
marked points on the components has
the potential to shift the degree of the polynomials up or down by some
integer.  We will focus on normalized versions of the $\chi _{\epsilon
}^{1,0}$ polynomials.  Therefore we will define the \emph{normalized
negative Poincar\'{e}--Chekanov polynomial} as
\begin{equation*}
\chi _{\epsilon}^{-}=\chi _{\epsilon}^{1,0}\cdot \lambda ^{m},
\end{equation*}
where $m$ is chosen so that $\chi _{\epsilon}^{-}$ has degree $0$.  The
set $\left\{ \chi _{\epsilon}^{-}\right\} $ arising from different choices
of $\epsilon $ is an invariant of a Legendrian link $\left( \Lambda
_{1},\Lambda _{0}\right) $.  In the cases studied in this paper, there will
always be a unique polynomial and so the set notation will be dropped.

We first compute the normalized negative Poincar\'e-Chekanov polynomial of a
Legendrian rational link; compare \fullref{theorem 6.1}.

\begin{theorem}
\label{PCpolyratlink}Let $L=\left( 2w_{n},k_{n},\dots
,2w_{1},k_{1},2w_{0}\right) $ (see \fullref{fig:rational_link}).  Then 
\begin{equation*}
\chi _{\epsilon}^{-}\left( \lambda \right) \left[ L\right] =w_{0}\lambda
^{0}+w_{1}\lambda ^{-k_{1}}+w_{2}\lambda ^{-\left( k_{1}+k_{2}\right)
}+\dots +w_{n}\lambda ^{-\left( k_{1}+k_{2}+\dots +k_{n}\right)}\text{.}
\end{equation*}
\end{theorem}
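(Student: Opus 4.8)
The plan is to follow the strategy used by Ng in \cite{Ng1} and refined by Ng and Traynor in \cite{Ng-Traynor}: build the link DGA from the front projection of $L$ shown in \fullref{fig:rational_link}, isolate the subcomplex computing $\chi_\epsilon^{1,0}$, and show that after a suitable choice of augmentation its differential vanishes, so that the Betti numbers $\beta_k^{1,0}(\epsilon)$ can be read directly off the generators. The proof will parallel the inductive structure of the proof of \fullref{theorem 6.1}.

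First I would set up the combinatorics of the front. Using the recursive picture in \fullref{fig:rational_link}, each block of $2w_i$ ``horizontal'' crossings contains exactly $w_i$ crossings whose overstrand lies on $\Lambda_1$ and whose understrand lies on $\Lambda_0$; the other $w_i$ crossings in that block have the roles of $\Lambda_1$ and $\Lambda_0$ reversed and so contribute to $\mathcal{A}^{0,1}$ rather than $\mathcal{A}^{1,0}$. These are the only generators of $\mathcal{A}^{1,0}$. Next I would compute their degrees using the grading algorithm of \cite[Section 2.2]{Ng1}: choosing the marked points on $\Lambda_1$ and $\Lambda_0$ so that the bottom block $2w_0$ carries degree $0$, one checks that passing upward through a block of $k_j$ vertical crossings shifts the degree of the next horizontal block down by $k_j$, so that the $w_i$ generators in the $2w_i$ block all have degree $-(k_1+k_2+\dots+k_i)$. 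This already accounts for the exponents and coefficients in the claimed polynomial, \emph{provided} the differential is trivial on this subcomplex.

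The core of the argument, and the step I expect to be the main obstacle, is the computation of $\partial_\epsilon^\prime$ on the graded vector spaces $V_k^{1,0}$. I would first choose an augmentation $\epsilon$; here one verifies, as in the rational-tangle computations of \cite{Ng-Traynor}, that the augmentation variety consists of a single point (or at least that the resulting normalized polynomial is independent of the choice, so the set $\{\chi_\epsilon^-\}$ has one element). Then one counts the rigid immersed disks in the front of $L$ with a single positive corner at a $(1,0)$ crossing and all other corners negative. Using the fact that each such disk must be confined between one horizontal block and an adjacent vertical block, one shows that after augmenting there are no such disks, i.e.\ $\partial_\epsilon^\prime$ restricted to $\bigoplus_k V_k^{1,0}$ is identically zero. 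Consequently $\beta_k^{1,0}(\epsilon)=\dim V_k^{1,0}$, which equals $w_i$ when $k=-(k_1+\dots+k_i)$ and is $0$ otherwise. I would organize this disk count inductively on $n$, taking the integral case $n=0$ (a chain of clasps with no vertical twists) as the base, exactly as in the proof of \fullref{theorem 6.1}.

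Finally, since the gradings were normalized so that the bottom horizontal block sits in degree $0$, the polynomial $\chi_\epsilon^{1,0}$ already has degree $0$ and hence equals its normalized version $\chi_\epsilon^-$, giving
$$\chi_\epsilon^-(\lambda)[L] = w_0\lambda^0 + w_1\lambda^{-k_1} + w_2\lambda^{-(k_1+k_2)} + \dots + w_n\lambda^{-(k_1+k_2+\dots+k_n)},$$
which is exactly the generating family polynomial of \fullref{theorem 6.1}.
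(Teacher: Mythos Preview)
Your proposal is correct and follows essentially the same approach as the paper: identify the $(1,0)$ generators as the alternating half of the horizontal crossings, compute their degrees via the grading algorithm so that the $2w_i$ block sits in degree $-(k_1+\cdots+k_i)$, observe that the augmentation is unique and that $\partial_\epsilon'$ vanishes on $V^{1,0}$, and read off the polynomial. The paper carries out this computation directly rather than organizing it inductively on $n$, but the content is the same.
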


\begin{proof}
We use $c_{i}$'s and $v_{i}$'s to denote generators arising from right cusps
and vertical crossings, respectively.  Let $h_{1},h_{2},\dots ,h_{2\sum
w_{i}}$ be the $2\sum_{i=0}^{n}w_{i}$ generators arising from the horizontal
crossings, numbered right to left. For example, \fullref{fig:dga_rat} shows
the generators for $L=\left( 2,1,4,2,2\right) $.

\begin{figure}[ht]
\labellist\small
\pinlabel {$\Lambda_1$} [br] at 55 221
\pinlabel {$\Lambda_0$} [tr] at 55 75
\pinlabel {$h_1$} [b] at 465 224
\pinlabel {$h_2$} [b] at 404 224
\pinlabel {$h_3$} [b] at 310 175
\pinlabel {$h_4$} [b] at 275 175
\pinlabel {$h_5$} [b] at 240 175
\pinlabel {$h_6$} [b] at 205 175
\pinlabel {$h_7$} [b] at 118 146
\pinlabel {$h_8$} [b] at 72 150
\pinlabel {$c_1$} [l] at 513 253
\pinlabel {$c_2$} [l] at 508 199
\pinlabel {$c_3$} [l] at 508 144
\pinlabel {$c_4$} [l] at 295 87
\pinlabel {$c_5$} [l] at 295 38
\pinlabel {$v_1$} [b] at 437 170
\pinlabel {$v_2$} [b] at 260 100
\pinlabel {$v_3$} [t] at 252 66
\endlabellist
\centerline{\includegraphics[height=2.5in]{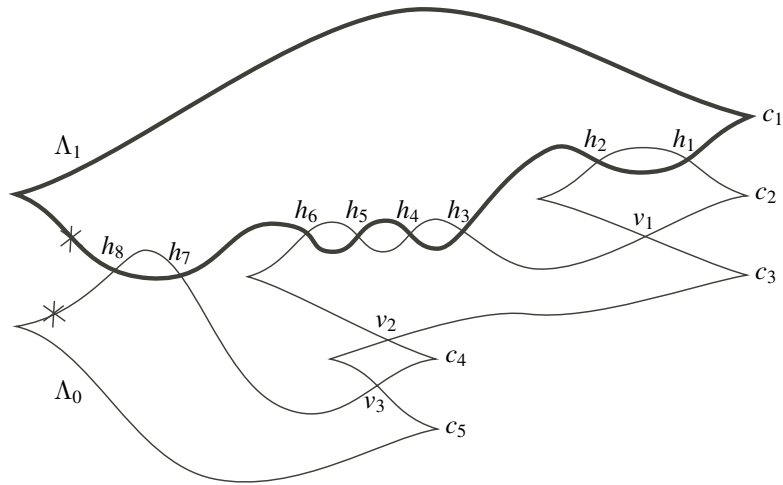}}
\caption{The generators of $\mathcal{A}$ for $L=\left(
2,1,4,2,2\right) $ are labeled using $c_{i}$'s, $v_{i}$'s, and $h_{i}$'s,
and the $\ast $'s on each component of $L$ indicate the choice of marked
points.}
        \label{fig:dga_rat}
\end{figure}

Since we are calculating $\chi _{\epsilon}^{-}$, we focus on generators in $V^{1,0}$, namely
$\left\{ h_{m}\mid m\text{ is even}\right\}$.
We can
choose marked points so that the degrees of the $h_{m}$ are as follows:
\begin{equation*}
\text{deg}h_{m}=\left\{ 
\begin{array}{lll}
\bigl( \sum_{i=1}^{n}k_{i}\bigr) \left( -1\right) ^{m+1} & \text{if} & 
1\leq m\leq 2w_{n} \\ 
\bigl( \sum_{i=1}^{n-1}k_{i}\bigr) \left( -1\right) ^{m+1} & \text{if} & 
2w_{n}+1\leq m\leq 2\left( w_{n}+w_{n-1}\right) \\ 
\qquad \quad \vdots &  & \qquad \qquad \vdots \\ 
\left( k_{1}\right) \left( -1\right) ^{m+1} & \text{if} & 2\left(
\sum_{i=2}^{n}w_{i}\right) +1\leq m\leq 2\left( \sum_{i=1}^{n}w_{i}\right)
\\ 
0 & \text{if} & 2\left( \sum_{i=1}^{n}w_{i}\right) +1\leq m\leq 2\left(
\sum_{i=0}^{n}w_{i}\right)
\end{array}
\right.
\end{equation*}
For example, in the link $L=\left( 2,1,4,2,2\right) $ with marked points
as indicated in \fullref{fig:dga_rat}, we have the following:
\begin{equation*}
\text{deg}h_{m}=\left\{ 
\begin{array}{lll}
3 & \text{if} & m=1 \\ 
-3 & \text{if} & m=2 \\ 
2 & \text{if} & m=3,5 \\ 
-2 & \text{if} & m=4,6 \\ 
0 & \text{if} & m=7,8.
\end{array}
\right.
\end{equation*}
This link has a unique augmentation $\epsilon $, for which $\epsilon \left(
h_{m}\right) =0$ for all $m$. Each $h_{m}$ is in the kernel of the $\partial _{\epsilon}^{\prime}$ map of the appropriate degree, but no $h_{m}$s are in the image of any $\partial _{\epsilon}^{\prime}$
map. Thus we have
\begin{equation*}
\chi _{\epsilon}^{1,0}\left( \lambda \right) \left[ L\right]
=\sum_{j=-\infty}^{\infty}N\left( h_{m},j\right) \cdot \lambda ^{j}\text{,}
\end{equation*}
where $N\left( h_{m},j\right) $ is the number of generators $h_{m}\in 
\mathcal{A}^{1,0}$ of degree $j$. Hence, checking the degrees of the $h_{m} $ given above, we have
\begin{equation*}
\chi _{\epsilon}^{1,0}\left( \lambda \right) \left[ L\right] =w_{0}\lambda
^{0}+w_{1}\lambda ^{-\left( k_{1}\right)}+w_{2}\lambda ^{-\left(
k_{1}+k_{2}\right)}+\dots +w_{n}\lambda ^{-\left( k_{1}+k_{2}+\dots
+k_{n}\right)}\text{.}
\end{equation*}
Since this polynomial is of degree $0$, it is in fact equal to $\chi
_{\epsilon}^{-}$.\hfill
\end{proof}

Lastly, we compute the normalized negative Poincar\'e--Chekanov polynomial of a
Legendrian twist link; compare \fullref{Ljkpolys}.

\begin{theorem}
\label{PCpolytwlink}Let $L_{j,k}$ be the twist link described in \fullref{fig:Ljk}. Then
\begin{equation*}
\chi _{\epsilon}^{-}\left( \lambda \right) [L_{j,k}] =\lambda
^{0}+\lambda ^{-\left| j-k\right|}.
\end{equation*}
\end{theorem}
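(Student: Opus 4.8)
The plan is to mirror the proof of \fullref{PCpolyratlink}: build the link DGA of $L_{j,k}$ from the front in \fullref{fig:Ljk}, isolate the generators lying in $\mathcal{A}^{1,0}$, compute their degrees for a convenient choice of marked points, and check that the induced differential $\partial_{\epsilon}^{\prime}$ neither annihilates nor hits any of these generators, so that $\chi_{\epsilon}^{1,0}$ simply counts them by degree.

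First I would place one marked point on each component so that the Maslov-number bookkeeping for the two twist regions (the $j$ self-crossings on $\Lambda_1$ and the $k$ self-crossings on $\Lambda_0$) is as clean as possible. The generators of $\mathcal{A}$ are the $j+k$ self-strand crossings, the four clasp crossings (two upper, two lower), and the right cusps. The self-strand crossings lie in $\mathcal{A}^{1,1}$ or $\mathcal{A}^{0,0}$ and two of the clasp crossings lie in $\mathcal{A}^{0,1}$, so none of these enter $\chi_{\epsilon}^{1,0}$; the generators that do lie in $\mathcal{A}^{1,0}$ are the remaining two clasp crossings, call them $h_{t}$ and $h_{b}$. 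Next I would compute $\deg h_{t}$ and $\deg h_{b}$ using the procedure of \cite[Section 2.2]{Ng1}: traversing each component, a twist region of $m$ crossings shifts the relative grading of the two strands meeting at a clasp by $\pm m$, and the contributions of the $j$ region and the $k$ region enter with opposite signs, so the raw degrees of $h_{t}$ and $h_{b}$ differ by $\pm(j-k)$. After the normalization by $\lambda^{m}$ that makes the top degree $0$, this yields the two exponents $0$ and $-|j-k|$ --- the same cancellation that produces $-|j-k|$ in \fullref{Ljkpolys} via the branch-index difference, and which collapses the two terms to $2\lambda^{0}$ exactly when $j=k$.

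As in \fullref{PCpolyratlink}, this link has a unique augmentation $\epsilon$, vanishing on every generator. With this $\epsilon$, inspection of the immersed disks in the front of \fullref{fig:Ljk} shows $\partial_{\epsilon}^{\prime} h_{t}=\partial_{\epsilon}^{\prime} h_{b}=0$, and that neither $h_{t}$ nor $h_{b}$ lies in the image of $\partial_{\epsilon}^{\prime}$; indeed $V^{1,0}$ is spanned just by $h_{t}$ and $h_{b}$, so there is nothing in $V^{1,0}$ of higher degree to map onto them. Hence $\beta_{k}^{1,0}(\epsilon)$ counts the $h_{t},h_{b}$ of degree $k$, giving $\chi_{\epsilon}^{1,0}(\lambda)[L_{j,k}]=\lambda^{0}+\lambda^{-|j-k|}$, which already has degree $0$ and so equals $\chi_{\epsilon}^{-}$; since $\epsilon$ is unique, the set $\{\chi_{\epsilon}^{-}\}$ reduces to this single polynomial.

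The main obstacle will be the grading computation: keeping precise track of orientations, the up/down-cusp conventions, and the effect of the chosen marked points, so that the two $\mathcal{A}^{1,0}$ clasp crossings acquire relative degrees differing by exactly $j-k$ (up to sign) rather than by some other combination of $j$ and $k$. The normalization absorbs an overall shift but not this relative placement, so it must genuinely be pinned down; comparison with the generating-family computation of \fullref{Ljkpolys} provides a useful consistency check. A smaller point is to confirm that no self-strand crossing or $\mathcal{A}^{0,1}$ clasp crossing of degree one higher contributes a term of $\partial_{\epsilon}^{\prime}$ landing on $h_{t}$ or $h_{b}$; this follows from listing the disks in \fullref{fig:Ljk}, which are small triangles among the twist crossings together with the standard clasp disks, none of which join a non-$\mathcal{A}^{1,0}$ generator to $h_{t}$ or $h_{b}$ in the required way.
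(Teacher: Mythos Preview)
Your proposal is correct and follows essentially the same route as the paper: identify the two interstrand crossings generating $\mathcal{A}^{1,0}$ (the paper calls them $x_1,x_3$, picked as the top crossing of each clasp pair), compute their degrees as $0$ and $k-j$ for a suitable choice of marked points, observe that the augmentation is unique, and check that both generators lie in $\ker\partial_{\epsilon}'$ and not in $\im\partial_{\epsilon}'$. One small correction: your assertion that the unique augmentation vanishes on \emph{every} generator is not quite right---right cusps have constant terms in their differentials, so some self-strand crossings must augment to $1$---but this does not affect the $V^{1,0}$ computation, and the paper simply asserts the unique augmentation without specifying its values.
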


\begin{proof}
Use $c_{i}$'s and $s_{i}$'s to denote generators arising from the right
cusps and self-strand crossings of $L_{j,k}$, respectively.  Use $x_{1},x_{2},x_{3}$, and $x_{4}$ to denote generators arising from the four
interstrand crossings, numbered from top to bottom.  See \fullref{fig:dga_twist} for an example of how to label generators.

\begin{figure}[ht]
\labellist\small
\pinlabel {$\Lambda_1$} [br] at 51 310
\pinlabel {$\Lambda_0$} [bl] at 313 315
\pinlabel {$c_1$} [l] at 363 261
\pinlabel {$c_2$} [l] at 292 167
\pinlabel {$c_3$} [l] at 361 78
\pinlabel {$c_4$} [l] at 222 261
\pinlabel {$c_5$} [l] at 152 186
\pinlabel {$c_6$} [l] at 152 152
\pinlabel {$c_7$} [l] at 222 80
\pinlabel {$s_1$} [b] at 257 189
\pinlabel {$s_2$} [t] at 255 147
\pinlabel {$s_3$} [b] at 109 207
\pinlabel {$s_4$} [b] at 114 171
\pinlabel {$s_5$} [t] at 112 132
\pinlabel {$x_1$} [b] at 185 299
\pinlabel {$x_2$} [t] at 181 243
\pinlabel {$x_3$} [b] at 180 100
\pinlabel {$x_4$} [t] at 190 40
\endlabellist
\centerline{\includegraphics[height=2in]{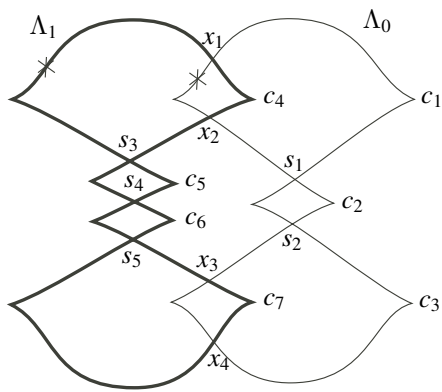}}
\caption{Labeling of generators of $\mathcal{A}$ for
the link $L_{3,2}$, where the $\ast $'s on each component of $L_{3,2}$
indicate the choice of marked points}
        \label{fig:dga_twist}
\end{figure}
Note that $\mathcal{A}^{1,0}$ is generated by $\left\{ x_{1},x_{3}\right\} $.  We can choose marked points so that deg$\left( x_{1}\right) =0$ and
deg$\left( x_{3}\right) = k - j$.
We have a unique augmentation $\epsilon $.  Calculating $\partial
_{\epsilon}^{\prime}$, we find that both $x_{1}$ and $x_{3}$ are in the
kernel of the $\partial _{\epsilon}^{\prime}$ map of the appropriate
degree, and neither is in the image of a $\partial _{\epsilon}^{\prime}$
map.  Thus we have 
\begin{equation*}
\chi _{\epsilon}^{1,0}\left( \lambda \right) [L_{j,k}] =\lambda
^{0}+\lambda ^{ k-j}\text{.}
\end{equation*}
The formula for the normalized Poincar\'e--Chekanov polynomial follows.
\end{proof}

Summarizing the results from Theorems  \ref{theorem 6.1}, 
\ref{PCpolyratlink}, \ref{Ljkpolys}, and \ref{PCpolytwlink}, we find: 
\begin{theorem} \label{same_polys}
Let $L$ be a Legendrian link that is either a rational link or a twist
link.  Then the negative homology polynomial of $L$ is the same as its
normalized negative Poincar\'{e}--Chekanov polynomial.
\end{theorem}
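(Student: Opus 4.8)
The plan is to compare, family by family, the explicit closed forms already obtained for the two polynomials. First, for a rational link $L = (2w_n, k_n, \dots, 2w_1, k_1, 2w_0)$, \fullref{theorem 6.1} gives
\[
\Gamma^-(\lambda)[L] = w_0\lambda^0 + w_1\lambda^{-k_1} + w_2\lambda^{-(k_1+k_2)} + \dots + w_n\lambda^{-(k_1+\dots+k_n)},
\]
and \fullref{PCpolyratlink} gives the identical expression for $\chi_\epsilon^-(\lambda)[L]$; the two polynomials agree term by term, so the rational-link case is immediate.

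Second, for a twist link $L_{j,k}$, \fullref{Ljkpolys} gives $\Gamma^-(\lambda)[L_{j,k}] = \lambda^0 + \lambda^{-|j-k|}$, and \fullref{PCpolytwlink} gives exactly $\chi_\epsilon^-(\lambda)[L_{j,k}] = \lambda^0 + \lambda^{-|j-k|}$, so again the two polynomials coincide. Here it is worth a word of care: in \fullref{PCpolytwlink} one starts from the unnormalized polynomial $\chi_\epsilon^{1,0}(\lambda)[L_{j,k}] = \lambda^0 + \lambda^{k-j}$ and then normalizes it to have degree zero, which is precisely the normalization recipe used to define $\Gamma^-$; the two normalizations therefore land on the same exponent $-|j-k|$, so the comparison is legitimate.

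Since every link under consideration is either a rational link or a twist link, the theorem follows from these two comparisons. I do not anticipate any genuine obstacle: all of the substance is contained in the four polynomial computations already carried out, and the only point to check beyond reading off their outputs is that $\Gamma^-$ and $\chi_\epsilon^-$ are normalized by the same procedure — multiplication by a power of $\lambda$ so that the resulting polynomial has degree zero — which is immediate from the respective definitions. Thus the ``proof'' amounts to placing the four formulas side by side and observing that they match.
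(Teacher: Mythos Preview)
Your proposal is correct and matches the paper's approach exactly: the paper presents this theorem as a summary of Theorems \ref{theorem 6.1}, \ref{Ljkpolys}, \ref{PCpolyratlink}, and \ref{PCpolytwlink}, with no separate proof, since the explicit formulas computed there visibly coincide. Your write-up is in fact more detailed than the paper's one-line remark, but the substance is the same.
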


\bibliographystyle{gtart}
\bibliography{link}

\end{document}